  \def\SvZfontcode{8} 
  \def\SvZslantedGreekCapitals{1}
\def\SvZrequireslantedRedef{0}
\def\SvZrequireslantedRedef{1}
\def\SvZrequireslantedRedef{1}
\def\SvZrequireslantedRedef{1}
\DeclareMathAlphabet{\bm}{OT1}{ptm}{b}{it} 
\def\SvZrequireslantedRedef{1}
\def\SvZrequireslantedRedef{1}
\renewcommand{\Gamma}{\varGamma}
\renewcommand{\Delta}{\varDelta}
\renewcommand{\Theta}{\varTheta}
\renewcommand{\Lambda}{\varLambda}
\renewcommand{\Xi}{\varXi}
\renewcommand{\Pi}{\varPi}
\renewcommand{\Sigma}{\varSigma}
\renewcommand{\Upsilon}{\varUpsilon}
\renewcommand{\Phi}{\varPhi}
\renewcommand{\Psi}{\varPsi}
\renewcommand{\Omega}{\varOmega}
\renewcommand{\phi}{\varphi}
\newcommand{\mathds}{\mathbb}
\DeclareMathOperator{\rank}{rk}
\newcommand{\corank}{\rank^*}
\DeclareMathOperator{\si}{si}
\DeclareMathOperator{\co}{co}
\newcommand{\N}{\mathds{N}}
\newcommand{\ignore}[1]{}
\DeclareMathOperator{\closure}{cl}
\newcommand{\smin}{\ensuremath{-}}
\newcommand{\delete}{\ensuremath{\backslash}}
\newcommand{\contract}{\ensuremath{\!/}}
\DeclareMathOperator{\bw}{bw}
\newtheorem{theorem}{Theorem}[section]
\newtheorem{theorem}{Theorem}
\newtheorem{lemma}[theorem]{Lemma}
\newtheorem{corollary}[theorem]{Corollary}
\newtheorem{claim}{Claim}[theorem] 
\theoremstyle{definition}
\newtheorem{definition}[theorem]{Definition}
\newenvironment{claimenv}{\list{}{\rightmargin0pt\leftmargin10pt\topsep0pt}\item[]}{\endlist}
\newenvironment{subproof}{\begin{claimenv}\begin{proof}}{\end{proof}\end{claimenv}}
\newcommand{\tangle}{\mathcal{T}}
\newcommand{\coclosure}{\closure^*}
\DeclareMathOperator{\fullclosure}{fcl}
\newcommand{\Dutchvon}[2]{#2}
\begin{document}
\title{Matroid 3-connectivity and branch width\footnote{The research for this paper was supported by the Natural Sciences and Engineering Research Council of Canada. The second author was also supported by the NWO (The Netherlands Organization for Scientific Research) free competition project ``Matroid Structure -- for Efficiency'' led by Bert Gerards, and by National Science Foundation grant 1161650.}}
\author{Jim Geelen\footnote{University of Waterloo, 200 University Avenue West, Waterloo, Ontario, Canada N2L 3G1.} \and Stefan H. M. van Zwam\footnote{Department of Mathematics, Louisiana State University, Baton Rouge, LA, United States. Email: \url{svanzwam@math.lsu.edu}}.}

\maketitle

\abstract{
We prove that, for each nonnegative integer $k$ and
each matroid $N$, if $M$ is a $3$-connected
matroid containing $N$ as a minor, and
the branch width of $M$ is sufficiently large,
then there is a $k$-element set $X\subseteq E(M)$ such that
one of $M\delete X$ and $M\contract X$ is 3-connected and
contains $N$ as a minor.
}

\section{Introduction} 
\label{sec:introduction}

We prove the following theorem.

\begin{theorem}\label{thm:mainresintro}
	Let $M$ be a matroid, let $N$ be a minor of $M$, and let $k$ be a nonnegative constant. If the branch width of $M$ is at least $20k + 2|E(N)|$, then there is a set $X\subseteq E(M)$ that has at least $k$ elements and is both independent and coindependent such that $M\delete X$ or $M\contract X$ is 3-connected with $N$ as a minor.
\end{theorem}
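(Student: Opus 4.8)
The plan is to reduce to the case that $M$ and $N$ are $3$-connected, to extract $X$ from a chain of $3$-connected minors joining $N$ to $M$, and to use the lower bound on $\bw(M)$ only to control the combinatorics of that chain. We may assume $k\ge 1$. If $N$ is not $3$-connected we replace it by a $3$-connected matroid $N'$ with $N\minorof N'$ and $|E(N')|\le 2|E(N)|$, which exists by a routine construction; a set $X$ that works for $(M,N')$ also works for $(M,N)$, and the hypothesis gives $\bw(M)\ge 20k+2|E(N)|\ge 20k+|E(N')|$ --- this is where the factor $2$ is spent. If $M$ is not $3$-connected we pass to a $3$-connected minor of $M$ that still has an $N'$-minor and branch width at least $20k+|E(N')|$, using that the tangle of order $\bw(M)$ is concentrated on a single $3$-connected part of $M$ and that low-order separations may be removed without losing the $N'$-minor. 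So assume from now on that $M$ and $N$ are $3$-connected and $N$ is a proper minor of $M$.

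By the Splitter Theorem and its standard consequence, the Chain Theorem for $3$-connected matroids, there is a sequence $N=M_0,M_1,\dots,M_t=M$ of $3$-connected matroids in which each $M_{i-1}$ is a minor of $M_i$, $|E(M_i)|-|E(M_{i-1})|\le 2$, every one-element step has the form $M_{i-1}=M_i\delete e_i$ or $M_{i-1}=M_i\contract e_i$, and every two-element step has $M_{i-1}$ and $M_i$ both wheels or both whirls. A one-element step changes branch width by at most $1$, while a two-element step, occurring among wheels and whirls (all of branch width $3$), does not change it; hence the chain has at least $\bw(M)-|E(N)|\ge 20k$ one-element steps. By the pigeonhole principle at least $10k$ of them are of the same kind, and by replacing $M$ and $N$ by their duals if necessary we may assume these are deletions.

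Suppose now, favourably, that the topmost $k$ one-element steps of the chain are deletions, say $M_{t-j}=M\delete\{e_t,\dots,e_{t-j+1}\}$ for $0\le j\le k$, and put $X=\{e_t,\dots,e_{t-k+1}\}$. Then $M\delete X=M_{t-k}$ is $3$-connected with an $N$-minor, and $X$ is coindependent: every step deletes an element that is not a coloop of the current matroid (a $3$-connected matroid with more than three elements has no coloop, and all the matroids here have at least $19k$ elements), so $E(M)-X$ retains rank $r(M)$. One then arranges that $X$ is independent as well, by choosing the deleted elements along the construction to avoid circuits; this is possible because large branch width forces the rank (and, dually, the corank) of $M$ to be large, so independent subselections are plentiful. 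A dual argument, in which one contracts an independent set $X$, handles the case of a long top run of contractions.

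What is not automatic --- and this is the crux, and the step I expect to be the hardest --- is to guarantee that a long monochromatic run may be taken \emph{at the top} of the chain; equivalently, that $M$ admits a one-sided reduction of length $k$: a coindependent $X$ with $M\delete X$ $3$-connected and $N$-minored, or dually an independent $X$ with $M\contract X$ such. Iterating the Splitter Theorem yields only \emph{a} chain, whose deletions and contractions may interleave exactly as they must along a fan. I expect this to be resolved by a fragility-type lemma: if $M$ has no one-sided reduction of length $k$, then every chain from $N$ to $M$ alternates between runs of length less than $k$, and this ``near-fragility'' forces $\bw(M)$ to be bounded by a linear function of $k$ and $|E(N)|$, contradicting the hypothesis. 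Quantifying how the local moves of the Splitter Theorem interact with branch width --- presumably through the order-$\bw(M)$ tangle of $M$ --- is where the real work lies.
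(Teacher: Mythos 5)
You correctly identify the crux --- that iterating the Splitter Theorem gives only \emph{a} chain whose deletions and contractions may interleave, and that the whole problem is to extract a long one-sided run --- but you leave that step entirely unproven, appealing to a ``fragility-type lemma'' that you do not state precisely and for which you offer no argument. That lemma is the theorem: it is exactly where the paper spends Sections 3--5. The paper does not attack the problem by examining chains at all. Instead it uses the tangle $\tangle$ of high order in $M$ to find (Theorem \ref{thm:delconset}) disjoint sets $C,D$ with $|C\cup D|\ge 10k-7$ such that $M\contract C\delete D$ is $3$-connected with an $N$-minor and, crucially, $E(N)\cup C\cup D$ is \emph{independent in the tangle matroid}; this $\tangle$-independence is what simultaneously gives independence and coindependence of the removed set (Lemma \ref{lem:tangleindep}) and allows separations to be controlled after re-inserting elements. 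It then introduces the restoration graph $R(M,C,D)$ --- the bipartite graph recording which pairs $\{c,d\}$ can be restored while keeping $3$-connectivity --- and shows via Lemmas \ref{lem:restoresubgraph}, \ref{lem:balance}, \ref{lem:inducedmatching}, and \ref{lem:manyfans} that any such large $\tangle$-independent $C\cup D$ must contain either a heavily imbalanced restorable subset or a large induced matching whose fans can all be trimmed on the same side, either of which yields the desired monochromatic $X$. Nothing in your outline suggests this mechanism, and the hand-waved ``near-fragility bounds $\bw(M)$'' does not become true just by asserting it; a near-alternating chain per se does not obviously bound branch width.

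A secondary issue: your reduction replaces $N$ by a \emph{$3$-connected} $N'$ with $N\minorof N'\minorof M$ and $|E(N')|\le 2|E(N)|$, calling this routine. It is not clear that such an $N'$ of that size exists in general, and you need it only because you intend to apply the Splitter Theorem. The paper's reduction (Lemma \ref{lem:connectedminor}) is weaker and genuinely routine: it produces $N'$ without loops or coloops, with $|E(N')|\le |E(N)|+l\le 2|E(N)|$, which is exactly what the tangle argument requires and is where the factor $2$ is actually spent. So even the preprocessing step of your plan overcommits. The later claim that ``one then arranges that $X$ is independent as well, by choosing the deleted elements along the construction to avoid circuits'' is likewise unsubstantiated; in the paper this falls out automatically from $\tangle$-independence rather than from any ad hoc choice.
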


Our main result (Theorem \ref{thm:mainres})	is a strengthening of Theorem \ref{thm:mainresintro} that involves tangles. Theorem \ref{thm:mainresintro} can be seen as a generalization of the Splitter Theorem, proved by Seymour \cite{Sey80} and, independently, by Tan \cite{Tan81}. In particular, consider the following formulation:

\begin{theorem}[Splitter Theorem]\label{thm:splitter}
	Let $M$ be a 3-connected matroid, and $N$ a 3-connected proper minor of $M$. If $M$ is not a wheel or a whirl, then there is an $e \in E(M)$ such that one of $M\delete e$ and $M\contract e$ is 3-connected with a minor isomorphic to $N$.
\end{theorem}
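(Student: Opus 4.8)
The statement is the classical Splitter Theorem, so the natural plan is to recover Seymour's proof, built on Tutte's Wheels-and-Whirls Theorem together with Bixby's Lemma. First I would normalize. If $|E(N)|\le 3$ the claim can be verified directly (these $N$ are essentially uniform and a suitable $e$ is found by hand), so assume $|E(N)|\ge 4$; then $N$ is both simple and cosimple, which is what lets one push $N$-minors through series/parallel reductions. If $|E(M)|=|E(N)|+1$ then, as $N$ is a minor of $M$, one of $M\delete e$ and $M\contract e$ is isomorphic to $N$ for the single element $e\in E(M)\smin E(N)$, and we are done; so assume $|E(M)|\ge|E(N)|+2$ and induct on $|E(M)|-|E(N)|$.

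For the engine: since $M$ is $3$-connected and not a wheel or whirl, Tutte's Wheels-and-Whirls Theorem supplies $x\in E(M)$ with $M\delete x$ or $M\contract x$ $3$-connected. The hypotheses and the conclusion are invariant under matroid duality (3-connectivity is self-dual, wheels and whirls are closed under duality, and deletion and contraction are interchanged), so I may assume $M\delete x$ is $3$-connected. If $M\delete x$ has an $N$-minor we are immediately done with $e=x$; more generally, if any single-element deletion or contraction of $M$ that keeps an $N$-minor happens to be $3$-connected, we are done. Hence the real work is under the assumption that no single-element reduction of $M$ is simultaneously $3$-connected and $N$-minor-bearing.

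Because $N$ is a proper minor there is $f\in E(M)$ and $\ast\in\{\delete,\contract\}$ with $M\ast f$ having an $N$-minor; by the running assumption $M\ast f$ is not $3$-connected, and as $M$ has no $1$-separation it has a nontrivial $2$-separation. The plan is to invoke Bixby's Lemma at $f$ to control these separations and pass to a $3$-connected minor $N'$ of $M$ — one of $\co(M\delete f)$ and $\si(M\contract f)$ — which, by the simplicity and cosimplicity of $N$, still carries an $N$-minor. Then, provided $|E(N)|<|E(N')|<|E(M)|$ and $N'$ is not a wheel or whirl, the inductive hypothesis applied to the pair $(M,N')$ produces $e$ with $M\delete e$ or $M\contract e$ $3$-connected with an $N'$-minor, hence with an $N$-minor, and we are done.

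What remains, and what I expect to be the main obstacle, are the degenerate cases: when the series/parallel reduction collapses all the way down to $N$ itself, and when the only available intermediate $3$-connected minors are wheels or whirls. Here one must analyze the fan structure that a near-$3$-separation forces on $M$ — either an end of a fan yields, after all, a single element $e$ whose removal is $3$-connected with an $N$-minor, or the structure propagates to make $M$ itself a wheel or a whirl, contradicting the hypothesis. Carrying out this $2$-separation bookkeeping carefully — in particular ensuring that Bixby's Lemma hands back a reduction on the side (delete versus contract) that actually carries the $N$-minor, rather than the opposite side — is the delicate core of the argument; the rest is the scaffolding above.
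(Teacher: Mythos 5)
The paper does not prove the Splitter Theorem; it cites it as a known result of Seymour (and independently Tan) and uses it as a black box, so there is no ``paper's own proof'' to compare against. Evaluating your proposal on its own merits: you have correctly identified the standard scaffolding --- induction on $|E(M)|-|E(N)|$, the Wheels-and-Whirls Theorem as the engine supplying a removable element, and Bixby's Lemma to pass to a smaller $3$-connected minor --- and you have correctly identified where the difficulty lies. But the parts you defer are not a cleanup step; they \emph{are} the Splitter Theorem. Concretely, your central move is ``invoke Bixby's Lemma at $f$ to pass to a $3$-connected minor $N'$, one of $\co(M\delete f)$ or $\si(M\contract f)$, which still carries an $N$-minor.'' This fails in exactly the case you flag: if $M\delete f$ carries the $N$-minor but it is $\si(M\contract f)$ that Bixby certifies as $3$-connected, you have no control over whether $\si(M\contract f)$ still has $N$ as a minor, and no fallback is given. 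Similarly, if the cosimplification/simplification collapses $M\ast f$ all the way down to $|E(N)|$ elements, your induction makes no progress. Resolving these two situations is precisely the long case analysis (triangles, triads, fans, and their interaction with $2$-separations) that occupies the bulk of Seymour's and Oxley's proofs; waving at ``fan structure that a near-$3$-separation forces'' names the territory but does not cross it.

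Two smaller points. First, the side condition you impose --- ``provided $N'$ is not a wheel or whirl'' before applying the inductive hypothesis to $(M,N')$ --- is unnecessary: the theorem's exceptional hypothesis concerns the ambient matroid $M$, not the target minor, and $M$ is unchanged, so the inductive hypothesis applies whenever $N'$ is a $3$-connected proper minor of $M$ with $|E(N')|>|E(N)|$. Second, the claim that ``$M\ast f$ has a nontrivial $2$-separation'' because it is $2$-connected but not $3$-connected conflates two things: $M\ast f$ may well have only minimal $2$-separations (series pairs, say), and in that case $\co(M\delete f)$ is $3$-connected directly without Bixby, which is actually the \emph{easy} case; it is the non-minimal $2$-separations, guaranteed on at most one side by Bixby, that create the asymmetry you need to argue around. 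So the overall verdict is: right skeleton, correct identification of the obstacles, but the load-bearing argument is absent and the proof as written has a genuine gap.
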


When the minor $N$ is the empty matroid, this result is known as Tutte's Wheels and Whirls Theorem \cite{Tut66}. Several variants exist, such as \citep{OSW08b,OSW10}.

In the Splitter Theorem, the two obstructions to the existence of a removable element, the wheels and whirls, have branch width 2. The branch width of a matroid is minor-monotone, so an easy consequence of Theorem \ref{thm:splitter} is the following.

\begin{corollary}\label{cor:bwsplitter}
	Let $M$ be a 3-connected matroid with $\bw(M)\geq 3$, and $N$ a 3-connected proper minor of $M$. Then there is an $e \in E(M)$ such that one of $M\delete e$ and $M\contract e$ is 3-connected with a minor isomorphic to $N$.
\end{corollary}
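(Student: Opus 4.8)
The plan is to derive this immediately from the Splitter Theorem (Theorem~\ref{thm:splitter}), using the branch-width hypothesis only to discard the two exceptional families.

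First I would invoke the fact recorded just before the statement: every wheel $\wheel{r}$ and every whirl $\whirl{r}$ has branch width $2$. Since $\bw(M)\ge 3$, it follows that $M$ is neither a wheel nor a whirl. One can say a little more, using that branch width is minor-monotone: $M$ has in fact no wheel- or whirl-minor at all; but only the statement about $M$ itself is needed here. I would also note in passing that $\bw(M)\ge 3$ forces $|E(M)|\ge 4$, since every matroid on at most three elements has a branch decomposition of width at most $2$, so no small-matroid degeneracies in $M$ arise.

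Then I would apply Theorem~\ref{thm:splitter} directly to $M$ and $N$: by hypothesis $M$ is $3$-connected and $N$ is a $3$-connected proper minor of $M$, and we have just shown that $M$ is not a wheel or a whirl. The Splitter Theorem then produces an element $e\in E(M)$ with one of $M\delete e$ and $M\contract e$ being $3$-connected and having a minor isomorphic to $N$, which is exactly the asserted conclusion.

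I do not expect any real obstacle: all of the content sits in the Splitter Theorem together with the standard computation, recalled above, that wheels and whirls have branch width $2$. The only points one would pin down in a fully formal write-up are an explicit width-$2$ branch decomposition of $\wheel{r}$ and $\whirl{r}$ (read off from the cyclic order of their rim and spoke elements), and the convention under which any very small matroid $N$ occurring here counts as $3$-connected, so that it matches the convention in Theorem~\ref{thm:splitter}.
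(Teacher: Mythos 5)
Your proof is correct and matches the paper's intended (and essentially only) argument: since wheels and whirls have branch width $2$, the hypothesis $\bw(M)\ge 3$ rules them out, and the Splitter Theorem then gives the conclusion directly. Your observation that minor-monotonicity of branch width is not actually needed for this corollary (only the fact that $M$ itself is not a wheel or whirl) is a small, accurate refinement of the paper's surrounding remark.
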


Sometimes deleting one element is insufficient. For instance, in papers on stabilizers or excluded minors, the notion of a \emph{deletion pair} is central \cite{Whi96b,GGK,HMZ11,MWZ09}. In those papers, 3-connectivity cannot be guaranteed when two elements are removed, but the 2-separations that are introduced can be handled at the cost of a more complicated analysis. Our result generalizes Corollary \ref{cor:bwsplitter} by showing that, if the branch width is large enough, then we can either delete or contract any fixed number of elements and preserve both 3-connectivity and a specified minor.

Note that, rather than preserving a matroid isomorphic to the minor $N$, we preserve $N$ itself. Additionally, we impose fewer conditions on the connectivity of $N$. \citet[Theorem 11.1.2]{ox2} describes a version of the Splitter Theorem in which $N$ is not 3-connected, but the conclusion of that theorem is significantly weaker than in the 3-connected case. 

%
%

\paragraph{Notation.} Our notation and terminology follow \citet{ox2}. Additionally, if $\mathcal{X}$ is a collection of sets, $\cup\mathcal{X}$ denotes the union of all sets in $\mathcal{X}$, and $\cap\mathcal{X}$ the intersection.


\section{Connectivity, branch width, and tangles} 
\label{sec:definitions}

\subsection{Closure} 
\label{sub:closure}
We use the usual definitions of closure and coclosure from \citet{ox2}. In addition, we define the following. Let $M$ be a matroid and $X\subseteq E(M)$. We say that subset $X$ of the groundset of a matroid $M$ is \emph{fully closed} if $X$ is both closed and coclosed in $M$. The smallest fully closed set containing $X$ is denoted by $\fullclosure_M(X)$. Some more terminology: a \emph{line} is a closed set of rank two. A line is \emph{long} if it has at least three rank-one flats.

The following elementary lemma is \cite[Proposition 2.1.12]{ox2}.

\begin{lemma}\label{lem:closurecomplement}
	Let $M$ be a matroid, $e \in E(M)$, and $(A,B)$ a partition of $E(M)\smin e$. Then $e \in \closure_M(A)$ if and only if $e \not \in \coclosure_{M}(B)$.
\end{lemma}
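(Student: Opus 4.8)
The plan is to translate both sides of the claimed equivalence into statements about the rank function $\rank$ of $M$ and then verify the resulting arithmetic identity directly. Recall that $e \in \closure_M(A)$ holds exactly when $\rank(A \cup e) = \rank(A)$, and that $\coclosure_M(B) = \closure_{M^*}(B)$, so $e \in \coclosure_M(B)$ holds exactly when $\corank(B \cup e) = \corank(B)$, where $\corank$ denotes the rank function of $M^*$. It therefore suffices to prove the negated equivalence: $\rank(A \cup e) = \rank(A) + 1$ if and only if $\corank(B \cup e) = \corank(B)$.

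The key tool is the standard identity expressing the dual rank in terms of the primal rank: for $S \subseteq E(M)$ one has $\corank(S) = |S| + \rank(E(M) \smin S) - \rank(E(M))$. I would apply this with $S = B$ and with $S = B \cup e$. Since $(A,B)$ partitions $E(M) \smin e$, we have $E(M) \smin B = A \cup e$ and $E(M) \smin (B \cup e) = A$, so the formula gives $\corank(B) = |B| + \rank(A \cup e) - \rank(E(M))$ and $\corank(B \cup e) = |B| + 1 + \rank(A) - \rank(E(M))$.

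Subtracting these yields $\corank(B \cup e) - \corank(B) = 1 + \rank(A) - \rank(A \cup e)$, so $\corank(B \cup e) = \corank(B)$ if and only if $\rank(A \cup e) = \rank(A) + 1$; this is exactly the negated equivalence, and taking contrapositives gives the lemma. There is essentially no real obstacle here: the only points requiring a moment's care are correctly identifying the two complements $E(M) \smin B = A \cup e$ and $E(M) \smin (B \cup e) = A$ from the hypothesis that $(A,B)$ is a partition of $E(M) \smin e$, and invoking the duality formula for rank functions (which one may either cite from \citet{ox2} or derive from the standard basis characterization of the dual).
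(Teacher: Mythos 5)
Your proof is correct: it is the standard derivation of this fact from the dual rank formula $\corank(S) = |S| + \rank(E(M)\smin S) - \rank(M)$, and the paper itself does not prove the lemma but simply cites it as \citet[Proposition~2.1.12]{ox2}, where essentially this same rank computation appears.
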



\subsection{Connectivity and separations} 
\label{sub:connectivity}
\label{sub:separations}
  An unfortunate consequence of the graph-theoretic pedigree of matroid theory is that two definitions of the connectivity function coexist (differing from each other by an additive constant of 1). We will take the smaller of these definitions:
  \begin{definition}\label{def:conn}
  	Let $M$ be a matroid. The \emph{connectivity function} $\lambda_M: 2^{E(M)}\rightarrow \N$ is defined by
    \begin{align*}
    	\lambda_M(X) := \rank_M(X) + \rank_M(E(M)\smin X) - \rank(M).
    \end{align*}
  \end{definition}

We will use the following elementary properties of the connectivity function, which can be found in \cite[Section 8.2]{ox2}:  

\begin{lemma}\label{lem:connprop}
	Let $M$ be a matroid, and $X,Y \subseteq E(M)$. The connectivity function of $M$ has the following properties.
	\begin{enumerate}
		\item\label{it:con1} $\lambda_M(X) = \rank_M(X) + \rank_{M^*}(X) - |X|$;
		\item\label{it:con2} $\lambda_M(E(M)\smin X) = \lambda_M(X)$;
		\item\label{it:con3} $\lambda_{M^*}(X) = \lambda_M(X)$;
		\item\label{it:con4} If $e \in E(M)\smin X$, then $\lambda_{M\delete e}(X) \leq \lambda_M(X) \leq \lambda_{M\delete e}(X) + 1$;
		\item\label{it:con5} $\lambda_M(X) + \lambda_M(Y) \geq \lambda_M(X\cap Y) + \lambda_M(X\cup Y)$.
  \end{enumerate}
\end{lemma}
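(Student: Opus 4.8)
The plan is to reduce all five statements to two standard facts about matroid rank functions, both found in \citet{ox2}: the rank formula for the dual, $\rank_{M^*}(X) = |X| + \rank_M(E(M)\smin X) - \rank(M)$, and the submodularity of the rank function, $\rank_M(A) + \rank_M(B) \geq \rank_M(A\cap B) + \rank_M(A\cup B)$ for all $A, B \subseteq E(M)$. With these available, each item needs only a short calculation.

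Item \eqref{it:con1} is the dual rank formula rearranged: substituting the expression for $\rank_{M^*}(X)$ into $\rank_M(X) + \rank_{M^*}(X) - |X|$ cancels the $|X|$ terms and leaves exactly $\rank_M(X) + \rank_M(E(M)\smin X) - \rank(M) = \lambda_M(X)$. Item \eqref{it:con2} is immediate from Definition~\ref{def:conn}, since interchanging $X$ with $E(M)\smin X$ leaves the right-hand side unchanged. Item \eqref{it:con3} follows from \eqref{it:con1} applied to $M^*$, together with $M^{**} = M$: this gives $\lambda_{M^*}(X) = \rank_{M^*}(X) + \rank_M(X) - |X| = \lambda_M(X)$.

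For item \eqref{it:con4} I would use \eqref{it:con1} together with the identity $(M\delete e)^* = M^*\contract e$. Since $e\notin X$, deletion leaves $\rank_M(X)$ unchanged, so $\lambda_{M\delete e}(X) = \rank_M(X) + \rank_{M^*\contract e}(X) - |X|$; comparing with $\lambda_M(X) = \rank_M(X) + \rank_{M^*}(X) - |X|$, it suffices to show $\rank_{M^*}(X) - 1 \leq \rank_{M^*\contract e}(X) \leq \rank_{M^*}(X)$. Writing $\rank_{M^*\contract e}(X) = \rank_{M^*}(X\cup\{e\}) - \rank_{M^*}(\{e\})$, the lower bound follows from monotonicity of rank and $\rank_{M^*}(\{e\}) \leq 1$, and the upper bound from submodularity applied to the pair $(X, \{e\})$. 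For item \eqref{it:con5} I would apply submodularity of $\rank_M$ twice, once to $(X, Y)$ and once to $(E(M)\smin X,\, E(M)\smin Y)$, using $(E(M)\smin X)\cap(E(M)\smin Y) = E(M)\smin(X\cup Y)$ and $(E(M)\smin X)\cup(E(M)\smin Y) = E(M)\smin(X\cap Y)$; adding the two inequalities and subtracting $2\rank(M)$ collects the terms precisely into $\lambda_M(X\cap Y) + \lambda_M(X\cup Y)$.

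Since the lemma is a collection of textbook identities, there is no genuine obstacle. The one place that rewards a moment's care is item \eqref{it:con4}: one must invoke the contraction rank formula and submodularity in the right way to obtain the one-sided bound $\lambda_{M\delete e}(X) \leq \lambda_M(X)$, rather than merely $|\lambda_M(X) - \lambda_{M\delete e}(X)| \leq 1$, which is all a naive monotonicity estimate delivers. An alternative route to \eqref{it:con4} stays entirely inside $M$: a coloop $e$ of $M$ lies outside the closure of every subset of $E(M)\smin e$, which forces the rank increments contributed by $e$ to the relevant sets to agree, and this pins down the difference $\lambda_M(X) - \lambda_{M\delete e}(X)$ to $\{0, 1\}$ directly.
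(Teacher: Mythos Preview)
Your proof is correct. The paper does not actually prove this lemma at all: it simply states that these properties ``can be found in \cite[Section 8.2]{ox2}'' and moves on. Your argument supplies exactly the standard textbook derivations that such a citation points to, so there is nothing to compare beyond noting that you have filled in what the paper deliberately omits.
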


For ease of reference, we repeat the usual definitions of separations and connectivity before stating some less common results.

\begin{definition}\label{def:separating}
	Let $M$ be a matroid. A set $X\subseteq E(M)$ is \emph{$k$-separating} if $\lambda_M(X) < k$. It is \emph{exactly} $k$-separating if $\lambda_M(X) = k-1$. 
\end{definition}

\begin{definition}\label{def:separation}
	Let $M$ be a matroid, and let $(X,Y)$ be a partition of $E(M)$. If $|X|, |Y| \geq k$ and $\lambda_M(X) < k$, then $(X,Y)$ is a \emph{$k$-separation} of $M$. If $\lambda_M(X) = k-1$, then $(X,Y)$ is an \emph{exact} $k$-separation of $M$.
\end{definition}

\begin{definition}\label{def:connected}
	A matroid $M$ is \emph{$k$-connected} if $M$ has no $k'$-separations with $k' < k$.
\end{definition}

Two partitions $(X_1, Y_1)$ and $(X_2, Y_2)$ \emph{cross} if $X_1\cap X_2$, $X_1\cap Y_2$, $Y_1 \cap X_2$, $Y_1\cap Y_2$ are all nonempty. An application of the following lemma (from \cite{OSW04}) is called an \emph{uncrossing}. We omit the standard proof.

\begin{lemma}\label{lem:uncrossing}
	Let $M$ be a $k$-connected matroid, and let $X_1, X_2$ be $k$-separating sets.
	\begin{enumerate}
		\item \label{it:uncros1} If $|X_1\cap X_2| \geq k-1$, then $X_1\cup X_2$ is $k$-separating.
		\item \label{it:uncros2} If $|E(M)\smin (X_1\cup X_2)| \geq k-1$, then $X_1\cap X_2$ is $k$-separating.
	\end{enumerate}
\end{lemma}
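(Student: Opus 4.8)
The plan is to derive both parts from submodularity of the connectivity function (Lemma~\ref{lem:connprop}\ref{it:con5}), together with the observation that in a $k$-connected matroid any partition whose two sides each have at least $k-1$ elements must have order at least $k$.

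First, for part~\ref{it:uncros1}, write $A := X_1\cap X_2$ and $B := E(M)\smin A$, and set $t := \lambda_M(A)$. I would split into two cases according to $|B|$. If $|B| \leq k-2$, then since $E(M)\smin(X_1\cup X_2) \subseteq B$ we get $\rank_M(E(M)\smin(X_1\cup X_2)) \leq |B| \leq k-2$, whence $\lambda_M(X_1\cup X_2) = \rank_M(X_1\cup X_2) + \rank_M(E(M)\smin(X_1\cup X_2)) - \rank(M) \leq k-2 < k$, so $X_1\cup X_2$ is $k$-separating. If instead $|B| \geq k-1$, I claim $t \geq k-1$: otherwise $t+1 \leq k-1 < k$, and since $|A| \geq k-1 \geq t+1$ and $|B| \geq k-1 \geq t+1$, the partition $(A,B)$ would be a $(t+1)$-separation of order less than $k$, contradicting $k$-connectivity. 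Now submodularity gives $\lambda_M(X_1\cup X_2) \leq \lambda_M(X_1) + \lambda_M(X_2) - \lambda_M(X_1\cap X_2) \leq (k-1)+(k-1)-(k-1) = k-1$, so again $X_1\cup X_2$ is $k$-separating.

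For part~\ref{it:uncros2}, I would dualize by passing to complements. By Lemma~\ref{lem:connprop}\ref{it:con2} the sets $Y_1 := E(M)\smin X_1$ and $Y_2 := E(M)\smin X_2$ are $k$-separating, and $Y_1\cap Y_2 = E(M)\smin(X_1\cup X_2)$ has at least $k-1$ elements by hypothesis. Applying part~\ref{it:uncros1} to $Y_1,Y_2$ shows that $Y_1\cup Y_2 = E(M)\smin(X_1\cap X_2)$ is $k$-separating, and one more application of Lemma~\ref{lem:connprop}\ref{it:con2} gives that $X_1\cap X_2$ is $k$-separating.

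The only step requiring any care is the case distinction in part~\ref{it:uncros1}: the submodularity bound alone does not suffice when $\lambda_M(X_1\cap X_2)$ is small, and one has to notice that in that regime $X_1\cup X_2$ necessarily has a small complement, so the crude estimate $\lambda_M(Z) \leq \min\{|Z|,\,|E(M)\smin Z|\}$ (immediate from Definition~\ref{def:conn} and $\rank_M(\,\cdot\,)\leq\rank(M)$) closes the argument directly. Everything else is bookkeeping, which is presumably why the authors call the proof standard and omit it.
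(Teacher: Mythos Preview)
Your proof is correct and is exactly the standard uncrossing argument the paper alludes to; the paper itself omits the proof entirely (``We omit the standard proof''), so there is nothing to compare against beyond noting that your submodularity-plus-$k$-connectivity argument is indeed the expected one.
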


Since we wish to preserve 3-connectivity, we have to know how separations change when taking minors.

\begin{lemma}\label{lem:3sepgutscoguts}
	Let $M$ be a $k$-connected matroid, $(X,Y)$ an exact $k$-separation of $M$, and $e\in X$, not a loop. The following are equivalent.
	\begin{enumerate}
		\item\label{it:guts1} $(X\smin e, Y)$ is a $(k-1)$-separation in $M\contract e$;
		\item\label{it:guts2} $e \in \closure_M(Y)\cap\closure_M(X\smin e)$;
		\item\label{it:guts3} $e\not\in\coclosure_M(Y) \cup \coclosure_M(X\smin e)$.
	\end{enumerate}
\end{lemma}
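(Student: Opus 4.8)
The plan is to split the three-way equivalence into the part (ii)$\iff$(iii), which is immediate from Lemma~\ref{lem:closurecomplement}, and the part (i)$\iff$(ii), which needs a short computation with the connectivity function together with the $k$-connectivity hypothesis. For the first part: since $e\in X$, the pair $(X\smin e,Y)$ is a partition of $E(M)\smin e$, so Lemma~\ref{lem:closurecomplement} applied to it gives $e\in\closure_M(X\smin e)$ if and only if $e\notin\coclosure_M(Y)$, and applied to the partition $(Y,X\smin e)$ it gives $e\in\closure_M(Y)$ if and only if $e\notin\coclosure_M(X\smin e)$. Conjoining these two equivalences turns (ii) into (iii).

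For (i)$\iff$(ii) I would compute $\lambda_{M\contract e}(X\smin e)$ directly. Since $e$ is not a loop, $\rank_{M\contract e}(Z)=\rank_M(Z\cup e)-1$ for every $Z\subseteq E(M)\smin e$, and $\rank(M\contract e)=\rank(M)-1$, so Definition~\ref{def:conn} gives $\lambda_{M\contract e}(X\smin e)=\rank_M(X)+\rank_M(Y\cup e)-\rank(M)-1=\lambda_M(X)-1+\big(\rank_M(Y\cup e)-\rank_M(Y)\big)$. The bracketed term is $0$ if $e\in\closure_M(Y)$ and $1$ otherwise, and $\lambda_M(X)=k-1$; since moreover $|X\smin e|\ge k-1$ and $|Y|\ge k-1$ (as $(X,Y)$ is a $k$-separation), the pair $(X\smin e,Y)$ is a $(k-1)$-separation of $M\contract e$ exactly when $e\in\closure_M(Y)$. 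In particular (ii) implies (i).

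It then remains to show (i)$\Rightarrow$(ii), that is, that $e\in\closure_M(Y)$ already forces $e\in\closure_M(X\smin e)$. If it did not, then $\rank_M(X\smin e)=\rank_M(X)-1$, and repeating the computation of the previous paragraph (now also using $\rank_M(Y\cup e)=\rank_M(Y)$) yields $\lambda_M(X\smin e)=\lambda_M(X)-1=k-2$; but then $(X\smin e,\,Y\cup e)$ is a partition of $E(M)$ into parts of size at least $k-1$ with connectivity less than $k-1$, i.e.\ a $(k-1)$-separation of $M$, contradicting $k$-connectivity. This last step is the only place where the statement uses more than rank arithmetic, and it is genuinely needed: without $k$-connectivity, $e$ could lie in $\closure_M(Y)$ while being a coloop of $M|X$, so (i) would not imply (ii). Accordingly I expect the rank bookkeeping to be entirely routine, and this single appeal to connectivity to be the one point that deserves to be spelled out.
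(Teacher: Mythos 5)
The paper itself gives no proof of this lemma, deferring instead to \citet[Section~8.2]{ox2}, so there is nothing in-text to compare against; your argument is the standard one found there and it is correct. The (ii)\,$\iff$\,(iii) step is exactly Lemma~\ref{lem:closurecomplement} applied to both orderings of the partition $(X\smin e,Y)$ of $E(M)\smin e$, the rank computation correctly shows that the $\lambda$-condition in (i) is equivalent to $e\in\closure_M(Y)$ alone (with the size bounds $|X\smin e|,|Y|\ge k-1$ inherited from $(X,Y)$ being a $k$-separation), and you have correctly located the one use of $k$-connectivity: if $e\in\closure_M(Y)$ but $e\notin\closure_M(X\smin e)$, then $(X\smin e, Y\cup e)$ would be a $(k-1)$-separation of $M$, which is forbidden.
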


See \cite[Section 8.2]{ox2} for a proof.

\begin{lemma}\label{lem:3sepcontract}
	Let $M$ be a $k$-connected matroid, $(X,Y)$ a $k$-separation of $M$, and $e\in X$ such that $M\contract e$ is $k$-connected. Then $e \not \in \closure_M(Y)$.
\end{lemma}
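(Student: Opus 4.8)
The plan is to argue by contradiction. Suppose $e\in\closure_M(Y)$; I will produce a $(k-1)$-separation of $M\contract e$, contradicting the hypothesis that $M\contract e$ is $k$-connected.

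First a small preliminary to pin down the shape of the contraction-rank formula: since $M$ is $k$-connected and $(X,Y)$ is a $k$-separation we have $|E(M)|\geq 2k$, and for $k\geq 2$ a $k$-connected matroid on at least two elements is loopless; in particular $e$ is not a loop, so $\rank_M(\{e\})=1$ and hence $\rank_{M\contract e}(Z)=\rank_M(Z\cup e)-1$ for every $Z\subseteq E(M)\smin e$. I would now compute $\lambda_{M\contract e}(X\smin e)$ directly from Definition~\ref{def:conn}. We get $\rank_{M\contract e}(X\smin e)=\rank_M(X)-1$ and $\rank(M\contract e)=\rank(M)-1$, while the hypothesis $e\in\closure_M(Y)$ gives $\rank_{M\contract e}(Y)=\rank_M(Y\cup e)-1=\rank_M(Y)-1$. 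Substituting,
\[
\lambda_{M\contract e}(X\smin e)=\bigl(\rank_M(X)-1\bigr)+\bigl(\rank_M(Y)-1\bigr)-\bigl(\rank(M)-1\bigr)=\lambda_M(X)-1\leq k-2,
\]
where the last inequality uses that $(X,Y)$ is a $k$-separation, so $\lambda_M(X)\leq k-1$.

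To finish, $(X\smin e,Y)$ is a partition of $E(M\contract e)=E(M)\smin e$ with $|X\smin e|=|X|-1\geq k-1$ and $|Y|\geq k\geq k-1$ (again because $(X,Y)$ is a $k$-separation), and $\lambda_{M\contract e}(X\smin e)\leq k-2<k-1$; hence $(X\smin e,Y)$ is a $(k-1)$-separation of $M\contract e$ in the sense of Definition~\ref{def:separation}, so $M\contract e$ is not $k$-connected --- a contradiction. The whole argument is a single rank computation, so there is no genuine obstacle; the only points requiring a moment's care are the degenerate small-$k$ situations (needed so that $e$ is not a loop and the contraction-rank identity holds as stated) and checking that $(X\smin e,Y)$ meets \emph{all} the requirements of a $(k-1)$-separation, the cardinality bounds included, and not merely the inequality on $\lambda$. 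One could instead deduce this from Lemma~\ref{lem:3sepgutscoguts}, but only after first determining via Lemma~\ref{lem:closurecomplement} whether $e\in\closure_M(X\smin e)$, which introduces an unnecessary case split; the direct computation above avoids it.
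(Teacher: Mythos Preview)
Your proof is correct. Interestingly, the paper's argument takes precisely the route you dismiss in your final sentence: it assumes $e\in\closure_M(Y)$ and then splits on whether $e\in\closure_M(X\smin e)$, quoting Lemma~\ref{lem:3sepgutscoguts} in the first case to obtain a $(k-1)$-separation of $M\contract e$, and in the second case observing that $\lambda_M(X\smin e)<\lambda_M(X)$, which contradicts the $k$-connectivity of $M$ itself rather than of $M\contract e$. Your direct rank computation is tidier in that it treats both cases at once --- the identity $\rank_{M\contract e}(X\smin e)=\rank_M(X)-1$ holds whether or not $e\in\closure_M(X\smin e)$ --- and avoids the appeal to the auxiliary lemma; the paper's version, conversely, leans on machinery already in place and does not need to verify the cardinality conditions for a $(k-1)$-separation of $M\contract e$ in the second case.
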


\begin{proof}
	Suppose that, contrary to the claim, $e\in\closure_M(Y)$. If $e\in\closure_M(X\smin e)$, then, by Lemma \ref{lem:3sepgutscoguts}, $M\contract e$ is not $k$-connected, a contradiction. If $e\not\in\closure_M(X\smin e)$, then $\lambda_M(X\smin e) < \lambda_M(X)$. But $M$ is $k$-connected, a contradiction.
\end{proof}

In some of our proofs we will require that a minor $N$ of a matroid $M$ has no loops or coloops. The following easy lemma implies that this assumption is not overly restrictive:

\begin{lemma}\label{lem:connectedminor}
	Let $M$ be a connected matroid and let $N$ be a minor of $M$. If $N$ has $l$ elements each of which is a loop or coloop, then $M$ has a minor $N'$ such that $N$ is a minor of $N'$, such that $N'$ has no loops and coloops, and such that $|E(N')| \leq |E(N)| + l$.
\end{lemma}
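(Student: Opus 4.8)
The plan is to argue by induction on $l$, which I take to be the exact number of elements of $N$ that are a loop or a coloop. If $l = 0$ we may take $N' = N$, so suppose $l \geq 1$. Since duality interchanges loops with coloops and contraction with deletion, and preserves the minor relation, the cardinalities of ground sets, and connectivity, I would first pass to $M^*$ and $N^*$ if necessary and thereby assume that $N$ has a \emph{loop} $e$. Since $M$ is connected, I may also assume that $M$ has no loops or coloops.

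Next I would write $N = M\contract C\delete D$ with $C, D$ disjoint and $|C|$ minimum. Then $C$ is independent in $M$: otherwise, replacing $C$ by a maximal independent subset $C'$ of itself turns the elements of $C\smin C'$ into loops of $M\contract C'$, which may be deleted rather than contracted, contradicting the minimality of $|C|$. Since $e$ is a loop of $M\contract C$ we have $e\in\closure_M(C)$, and as $C$ is independent and $M$ is loopless, $C\cup\{e\}$ contains a unique circuit $C_e$, with $e\in C_e$ and $|C_e|\geq 2$.

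The crux of the argument is to choose $f\in C_e\smin\{e\}$ whose restoration to the ground set does not introduce a coloop. A short computation shows that, for any $g\in C_e\smin\{e\}$, the element $g$ is a coloop of $M\contract(C\smin g)\delete D$ if and only if $g\notin\closure_M(E(M)\smin(D\cup\{g\}))$, equivalently if and only if $M$ has a cocircuit $C^*$ with $g\in C^*\subseteq D\cup\{g\}$. If \emph{every} $g\in C_e\smin\{e\}$ had such a cocircuit $C^*_g$, then, since $C^*_g\subseteq D\cup\{g\}$ and $C_e\subseteq C\cup\{e\}$ share only $g$, we would have $C_e\cap C^*_g=\{g\}$, contradicting the fact that a circuit and a cocircuit of a matroid never meet in exactly one element. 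Hence some $f\in C_e\smin\{e\}$ has no such cocircuit; I would then set $N_1 := M\contract(C\smin f)\delete D$. For this $f$: the element $f$ is not a coloop of $N_1$, the set $C\smin f$ is independent in $M$, $N = N_1\contract f$ (so $N$ is a minor of $N_1$), and $|E(N_1)| = |E(N)| + 1$.

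It then remains to check that $N_1$ has at most $l-1$ elements that are loops or coloops, after which the induction hypothesis applied to $N_1$ produces a minor $N'$ of $M$ having $N_1$ (hence $N$) as a minor, having no loops or coloops, and with $|E(N')|\leq |E(N_1)| + (l-1) = |E(N)| + l$, as wanted. For the bound on $N_1$: $f$ is not a loop of $N_1$ because $C$ is independent, and not a coloop by the choice of $f$; and contracting the non-loop $f$ out of $N_1$ destroys no loop and, as another short rank computation shows, destroys no coloop, so every loop of $N_1$ is a loop of $N$ and every coloop of $N_1$ is a coloop of $N$. On the other hand $e$ is a loop of $N$ but not of $N_1$, since $(C\smin f)\cup\{e\}$ is independent in $M$, and $e$, being a loop of $N$, is not a coloop of $N$. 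Thus the set of loop-or-coloop elements of $N_1$ is contained in that of $N$ and omits $e$, giving the bound. I expect the only real difficulty to be the middle step — pinning down precisely when restoring a contracted element creates a coloop, and then invoking circuit–cocircuit orthogonality to avoid this for some element of $C_e\smin\{e\}$; the behaviour of loops and coloops under a single contraction, and the reduction to the case where $C$ is independent, are routine.
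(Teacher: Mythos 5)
Your proof is correct and follows essentially the same inductive route as the paper's: write $N = M\contract C\delete D$ with $C$ independent, take the fundamental circuit $C_e\subseteq C\cup\{e\}$ of the loop $e$, and move an element $f\in C_e\smin\{e\}$ from $C$ back to the ground set. The step you single out as the crux---picking $f$ via circuit--cocircuit orthogonality so that restoring it creates no coloop---is unnecessary: as you yourself note near the end, $e$ is a loop of $N_1\contract f = N$ but not of $N_1$, so $\{e,f\}$ is a two-element circuit (parallel pair) of $N_1$, and an element lying in a parallel pair can never be a coloop; hence \emph{any} $f\in C_e\smin\{e\}$ works, which is exactly the observation the paper makes.
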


\begin{proof}
	Let $M$ be a connected matroid, let $N$ be a minor of $M$, and let $C, D \subseteq E(M)$ be such that $N = M \contract C \delete D$ with $C$ independent and $D$ coindependent. Let $e$ be a loop of $N$. Since $M$ is connected, $e$ is not a loop of $M$. Hence there is a circuit $X \subseteq C\cup e$ using $e$ with $|X| \geq 2$. Let $f\in X\smin e$, and consider $N'' := M\contract (C\smin f) \delete D$. Since $\{e,f\}$ is a parallel pair in $N''$, the matroid $N''$ has strictly fewer loops than $N$. Moreover, $|E(N'')| = |E(N)| + 1$. The result now follows by duality and induction.
\end{proof}

We note that Lemos and Oxley \cite{LO98} proved that, if $N$ has $k$ components, then $M$ has a \emph{connected} minor $N'$ on at most $|E(N)| + 2k - 2$ elements.

\subsection{2-separations}
In this subsection we consider preserving a minor in the presence of a 2-separation. The following lemma is a special case of \cite[Corollary 8.2.2]{ox2}.

\begin{lemma}\label{lem:simplecosimple2sep}
	Let $(A,B)$ be a 2-separation of a connected matroid $M$. If $|B| = 2$, then $B$ is a parallel or series pair.
\end{lemma}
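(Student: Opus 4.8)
The plan is to convert the hypothesis into a single rank equation and then split into two symmetric cases. Write $B=\{e,f\}$; since $|B|=2$ we have $|E(M)|\ge 4$. I would first record a standard fact: a connected matroid on at least two elements has neither a loop nor a coloop, since for a loop $g$ one computes $\lambda_M(\{g\})=\rank_M(\{g\})+\rank_M(E(M)\smin g)-\rank(M)=0+\rank(M)-\rank(M)=0$, so $(\{g\},E(M)\smin g)$ is a $1$-separation, contradicting connectedness (and dually for a coloop, passing to $M^*$). Consequently $e$ and $f$ are nonloops of both $M$ and $M^*$, so $\rank_M(B)\in\{1,2\}$ and $\rank_{M^*}(B)\in\{1,2\}$.

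Second, since $M$ is connected and $A,B$ are nonempty, $\lambda_M(B)\ge 1$, while $(A,B)$ being a $2$-separation gives $\lambda_M(B)<2$; hence $\lambda_M(B)=1$. Feeding this into the identity $\lambda_M(B)=\rank_M(B)+\rank_{M^*}(B)-|B|$ from Lemma~\ref{lem:connprop} yields $\rank_M(B)+\rank_{M^*}(B)=3$, so exactly one of the two ranks equals $1$.

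Finally I would conclude by cases. If $\rank_M(B)=1$, then $B$ is a two-element set that is dependent but has no dependent proper subset (as $e,f$ are nonloops), i.e.\ a circuit, so $B$ is a parallel pair of $M$. If instead $\rank_{M^*}(B)=1$, the same reasoning inside $M^*$ makes $B$ a circuit of $M^*$, i.e.\ a cocircuit of $M$, so $B$ is a series pair of $M$.

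I do not expect any real obstacle here: the only step that uses anything beyond arithmetic is excluding $\rank_M(B)=0$ and $\rank_{M^*}(B)=0$, which is precisely where connectedness and $|E(M)|\ge 4$ enter. (If one prefers, the statement is the advertised special case of \cite[Corollary 8.2.2]{ox2} and could simply be cited.)
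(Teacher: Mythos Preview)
Your argument is correct. The paper does not actually prove this lemma at all; it simply records it as a special case of \cite[Corollary~8.2.2]{ox2}. Your self-contained proof via the identity $\lambda_M(B)=\rank_M(B)+\rank_{M^*}(B)-|B|$ is exactly the standard one and goes through without issue, and you also note the citation option at the end, so there is nothing to compare.
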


\begin{lemma}\label{lem:2sepminorcondel}
	Let $M$ be a matroid, $N$ a minor of $M$, and $(A,B)$ a 2-separation of $M$ with $B\cap E(N) = \emptyset$. Then one of $M\delete B$ and $M\contract B$ has $N$ as a minor.
\end{lemma}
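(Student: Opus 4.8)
The plan is to express $N$ via disjoint contraction and deletion sets, split those sets along $B$, and then use a short rank identity that forces all of $B$ to be removed ``on one side'' of the minor operations. Write $N = M\contract C\delete D$ with $C$ and $D$ disjoint. The ground set of $M\contract C\delete D$ is $E(M)\smin(C\cup D)=E(N)$, and $B\cap E(N)=\emptyset$, so $B\subseteq C\cup D$; put $B_c:=B\cap C$ and $B_d:=B\cap D$, so $(B_c,B_d)$ is a partition of $B$.

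The crux is the identity
\[
	\lambda_{M\contract B_c}(B_d) + \lambda_{M\delete B_d}(B_c) = \lambda_M(B),
\]
which holds for any matroid $M$ and disjoint sets $B_c,B_d$. I would prove it by expanding both terms using $\rank_{M\contract B_c}(S)=\rank_M(S\cup B_c)-\rank_M(B_c)$ and $\rank_{M\delete B_d}(S)=\rank_M(S)$: the $\rank_M(B_c)$ contributions and the $\rank_M(E(M)\smin B_d)$ contributions cancel, and the left-hand side telescopes to $\rank_M(B_c\cup B_d)+\rank_M(E(M)\smin(B_c\cup B_d))-\rank(M)=\lambda_M(B)$. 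Since $(A,B)$ is a $2$-separation we have $\lambda_M(B)\leq 1$, and both summands on the left are nonnegative, so at least one of them equals $0$.

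If $\lambda_{M\contract B_c}(B_d)=0$, then $B_d$ is a separator of $M\contract B_c$, so contracting $B_d$ and deleting $B_d$ in $M\contract B_c$ give the same matroid; hence $M\contract B = M\contract B_c\contract B_d = M\contract B_c\delete B_d$. Reordering the pairwise-disjoint minor operations, $N = M\contract C\delete D = (M\contract B_c\delete B_d)\contract(C\smin B_c)\delete(D\smin B_d)$, so $N$ is a minor of $M\contract B$. The case $\lambda_{M\delete B_d}(B_c)=0$ is dual: $B_c$ is a separator of $M\delete B_d$, so $M\delete B = M\delete B_d\contract B_c$, and $N = (M\delete B)\contract(C\smin B_c)\delete(D\smin B_d)$ is a minor of $M\delete B$.

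The only genuinely computational step is verifying the displayed identity, a routine telescoping rank calculation; everything else is bookkeeping about which elements get contracted versus deleted, together with the standard fact that $\lambda_{M'}(X)=0$ precisely when $X$ is a separator of $M'$ (which is exactly what makes ``contract $X$'' and ``delete $X$'' coincide). I expect no real obstacle here; the one observation worth recording is that the hypothesis is used only through $\lambda_M(B)\leq 1$, so the same argument proves the statement for any $1$-separating set $B$ disjoint from $E(N)$.
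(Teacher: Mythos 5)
Your proof is correct and follows essentially the same line as the paper's: split the removal of $B$ into a contraction part and a deletion part, show (via a rank computation) that the connectivity of the split falls entirely on one side, and use the fact that a separator can be removed indifferently by deletion or contraction. Your displayed identity $\lambda_{M\contract B_c}(B_d) + \lambda_{M\delete B_d}(B_c) = \lambda_M(B)$ is simply a clean packaging of the rank telescoping that the paper carries out inline for the two summands.
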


\begin{proof}
	Since no element of $B$ is in $N$, there are disjoint sets $C,D \subseteq B$ such that $B = C\cup D$ and $M\contract C \delete D$ has $N$ as a minor. If $\lambda_{M\delete D}(C) = 0$, then $M\delete D \contract C = M\delete D \delete C$ and the result follows. Therefore $\lambda_{M\delete D}(C) = \rank_M(A) + \rank_M(C) - \rank_M(A\cup C) = 1$. But
	\begin{align*}
		\lambda_{M\contract C}(A) & = \rank_M(A\cup C) - \rank_M(C) + \rank_M(B) - \rank_M(C) - (\rank(M) - \rank_M(C))\\
		                          & = \rank_M(A) + \rank_M(C) - 1 - \rank_M(C) + \rank_M(B)  - \rank(M) \\
		  & = \lambda_M(A) - 1 = 0,
	\end{align*}
	so $D$ is a separator of $M\contract C$. Hence $M\contract C \delete D = M\contract C \contract D$, and the result follows.
\end{proof}

An easy consequence is this:

\begin{corollary}\label{lem:2sepminorconoutclosure}
	Let $M$ be a matroid, $N$ a minor of $M$, and $(A,B)$ a 2-separation of $M$ with $B\cap E(N) = \emptyset$. If $e\in B\smin\closure_M(A)$, then $M\contract e$ has $N$ as a minor.
\end{corollary}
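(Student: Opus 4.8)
The plan is to reduce everything to Lemma \ref{lem:2sepminorcondel}, which tells us that $N$ is a minor of $M\delete B$ or of $M\contract B$. If $N$ is a minor of $M\contract B$, we are immediately done: since $e\in B$ we have $M\contract B = (M\contract e)\contract(B\smin e)$, so $M\contract B$, and hence $N$, is a minor of $M\contract e$.

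So the only case that needs the extra hypothesis $e\notin\closure_M(A)$ is the case where $N$ is a minor of $M\delete B$. Here I would write $M\delete B = (M\delete(B\smin e))\delete e$ and look at the intermediate matroid $M\delete(B\smin e)$, whose ground set is $A\cup e$. Because closure can only shrink under deletion, $e\notin\closure_M(A)$ forces $e\notin\closure_{M\delete(B\smin e)}(A)$, i.e.\ $e$ is a coloop of $M\delete(B\smin e)$. For a coloop, deletion and contraction agree, so
\begin{align*}
	M\delete B = (M\delete(B\smin e))\delete e = (M\delete(B\smin e))\contract e = (M\contract e)\delete(B\smin e),
\end{align*}
using that deletion and contraction of disjoint sets commute. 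Thus $M\delete B$ is a minor of $M\contract e$, and since $N$ is a minor of $M\delete B$, we conclude that $N$ is a minor of $M\contract e$.

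This is really just a short case analysis, so there is no genuine obstacle; the only points requiring a moment's care are the behaviour of closure under deletion (used to identify $e$ as a coloop of $M\delete(B\smin e)$) and the commutation of the deletion of $B\smin e$ with the contraction of $e$, both of which are standard facts from \citet{ox2}.
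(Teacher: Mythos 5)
Your proof is correct and follows essentially the same approach as the paper: reduce to Lemma~\ref{lem:2sepminorcondel}, handle the $M\contract B$ case trivially, and in the $M\delete B$ case observe that $e$ is a coloop of $M\delete(B\smin e)$, so deletion and contraction coincide.
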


\begin{proof}
	If $M\contract B$ has $N$ as a minor, then we are done, so we may assume $M\delete B$ has $N$ as a minor. Consider $M' := M\delete (B\smin e)$. Since $e\not\in\closure_{M'}(A)$, it is a coloop of $M'$, and therefore $M'\delete e = M'\contract e$.
\end{proof}

We immediately find the following:

\begin{corollary}\label{cor:2sepminordelcon}
	Let $M$ be a matroid, $N$ a minor of $M$, and $(A,B)$ a 2-separation of $M$ with $B\cap E(N) = \emptyset$. If $e\in B\smin (\closure_M(A)\cup\coclosure_M(A))$, then both $M\delete e$ and $M\contract e$ have $N$ as a minor.
\end{corollary}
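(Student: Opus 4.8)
The plan is to derive this directly from Corollary \ref{lem:2sepminorconoutclosure} by combining it with matroid duality. First I would record the relevant dual facts: by Lemma \ref{lem:connprop}\ref{it:con3} the connectivity function is invariant under duality, so $(A,B)$ is also a 2-separation of $M^*$ (the size conditions $|A|,|B|\geq 2$ are unchanged); moreover $N^*$ is a minor of $M^*$, and since $E(N^*)=E(N)$ we still have $B\cap E(N^*)=\emptyset$; finally, by definition of the coclosure, $\closure_{M^*}(A)=\coclosure_M(A)$.

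Next I would apply Corollary \ref{lem:2sepminorconoutclosure} twice. Applied to $M$ and $N$: since $e\in B\smin\closure_M(A)$, we conclude that $M\contract e$ has $N$ as a minor. Applied to $M^*$ and $N^*$: since $e\in B\smin\closure_{M^*}(A)=B\smin\coclosure_M(A)$, we conclude that $M^*\contract e$ has $N^*$ as a minor; dualizing, $M\delete e=(M^*\contract e)^*$ has $N=(N^*)^*$ as a minor. Together these two statements give the corollary.

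There is no genuine obstacle here — this is exactly the ``we immediately find'' situation, with the previous corollary doing all the work. The only point to be careful about is bookkeeping: the hypothesis $e\notin\closure_M(A)\cup\coclosure_M(A)$ is precisely what guarantees that the hypothesis of Corollary \ref{lem:2sepminorconoutclosure} holds both in $M$ (for the contraction statement) and in $M^*$ (for the deletion statement), and that the roles of closure/coclosure and of deletion/contraction swap correctly under dualization.
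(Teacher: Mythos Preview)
Your argument is correct and matches the paper's approach: the paper gives no separate proof, introducing the corollary with ``We immediately find the following,'' which is precisely the duality reduction to Corollary~\ref{lem:2sepminorconoutclosure} that you spell out.
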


The following is \cite[Lemma 8.3.3]{ox2}.

\begin{lemma}\label{lem:circsacross2sep}
  Let $M$ be a matroid, and let $(A,B)$ be a 2-separation of $M$.	If $C_1, C_2$ are circuits of $M$, both of which meet both $A$ and $B$, then $(C_1\cap A)\cup (C_2\cap B)$ is a circuit of $M$.
\end{lemma}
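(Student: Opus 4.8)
The plan is to deduce this from the standard description of the circuits of a $2$-sum. First note that $(A,B)$ is necessarily an \emph{exact} $2$-separation: since $C_1$ is a circuit meeting both $A$ and $B$, $M$ is not the direct sum of $M|A$ and $M|B$, so $\lambda_M(A)\ge 1$, and as $(A,B)$ is a $2$-separation $\lambda_M(A)\le 1$; hence $\lambda_M(A)=1$. Since moreover $|A|,|B|\ge 2$, the structure theorem for exact $2$-separations (see \cite[Chapter 7]{ox2}) applies: $M$ is the $2$-sum $M_A\oplus_2 M_B$ along a basepoint $p\notin E(M)$, where $M_A$ has ground set $A\cup\{p\}$ with $M_A\delete p=M|A$, $M_B$ has ground set $B\cup\{p\}$ with $M_B\delete p=M|B$, and $p$ is neither a loop nor a coloop of $M_A$ or of $M_B$.

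Now recall that the circuits of $M_A\oplus_2 M_B$ are exactly: (i) the circuits of $M_A$ not containing $p$; (ii) the circuits of $M_B$ not containing $p$; and (iii) the sets $(D\smin p)\cup(D'\smin p)$, where $D$ is a circuit of $M_A$ containing $p$ and $D'$ is a circuit of $M_B$ containing $p$. A circuit of type (i) lies in $A$ and one of type (ii) lies in $B$, whereas a circuit of type (iii) meets both $A$ and $B$ (as $p$ is not a coloop, each of $D,D'$ contains an element other than $p$). So a circuit of $M$ meets both $A$ and $B$ if and only if it has type (iii). Applying this to $C_1$ and $C_2$ yields circuits $D_1,D_2$ of $M_A$ through $p$ and $D_1',D_2'$ of $M_B$ through $p$ with $C_i=(D_i\smin p)\cup(D_i'\smin p)$; in particular $C_1\cap A=D_1\smin p$ and $C_2\cap B=D_2'\smin p$. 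Hence $(C_1\cap A)\cup(C_2\cap B)=(D_1\smin p)\cup(D_2'\smin p)$ is again a circuit of $M$ of type (iii), which is what we wanted.

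The only substantial ingredients are the $2$-sum decomposition of an exact $2$-separation and the circuit description of a $2$-sum; these are standard, so the write-up should be short and there is no serious obstacle. If a self-contained argument avoiding $2$-sums is preferred, one works directly with ranks: with $A_i:=C_i\cap A$ and $B_i:=C_i\cap B$, each of $A_i,B_i$ is independent (a proper nonempty subset of a circuit), and one must show $\rank_M(A_1\cup B_2)=|A_1|+|B_2|-1$ (so that $(C_1\cap A)\cup(C_2\cap B)$ is dependent with a unique circuit inside it) together with the fact that every proper subset of $A_1\cup B_2$ is independent. The delicate part is precisely these two rank facts: they say that the ``guts'' of the $2$-separation as seen from $A_1$ coincides with that seen from $B_2$, and the difficulty is that the guts of a $2$-separation need not be an actual element of $M$ — for instance, in the $2$-sum of two copies of $U_{2,4}$ one has $\closure_M(A)\cap\closure_M(B)=\emptyset$ — so one cannot simply track a common point and must argue via the connectivity function throughout.
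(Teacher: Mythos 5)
The paper offers no proof of this lemma --- it simply cites it as Oxley, Lemma~8.3.3 --- and your argument via the $2$-sum decomposition is exactly how that result is established in Oxley (Section~8.3 builds the $2$-sum machinery precisely to prove such statements), so your approach is the standard one. Two small points. First, a slip of the pen: to see that a type~(iii) circuit meets both $A$ and $B$ you need each of $D-p$, $D'-p$ nonempty, which follows because $p$ is not a \emph{loop} of $M_A$ or $M_B$ (so $\{p\}$ is not a circuit), not because $p$ is not a coloop; you state the right hypothesis but invoke the wrong half of it. Second, the $2$-sum decomposition theorem you cite requires $M$ to be connected, whereas the lemma as stated here allows $M$ to be any matroid. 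This is easy to repair but worth a sentence: if $M$ is disconnected then, since a circuit lies entirely within one component and $\lambda_M(A)=\sum_i \lambda_{M_i}(A\cap E(M_i))=1$, only one component $M_1$ can contain a circuit meeting both sides, so $C_1,C_2\subseteq E(M_1)$ and $(A\cap E(M_1),B\cap E(M_1))$ is an exact $2$-separation of the connected matroid $M_1$. One then applies your argument inside $M_1$, handling separately the degenerate possibility that one of $A\cap E(M_1)$, $B\cap E(M_1)$ has size one --- say $A\cap E(M_1)=\{a\}$; then $C_1\cap A=C_2\cap A=\{a\}$ and $(C_1\cap A)\cup(C_2\cap B)=C_2$, so the conclusion is trivial.
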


To use Corollary \ref{cor:2sepminordelcon} effectively, we need a little more information about $\closure_M(A)\cup\coclosure_M(A)$. We omit the proof of the following lemma, which is straightforward with Lemma \ref{lem:circsacross2sep}.

\begin{lemma}\label{lem:2sepclosurecoclosuresmall}
	Let $M$ be a connected matroid, and $(A,B)$ a 2-separation of $M$. Then at least one of $\closure_M(A)\cap B$ and $\coclosure_M(A)\cap B$ is empty.
\end{lemma}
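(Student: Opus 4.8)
The plan is to argue by contradiction. Suppose both $\closure_M(A)\cap B$ and $\coclosure_M(A)\cap B$ are nonempty, and fix $e\in\closure_M(A)\cap B$ and $f\in\coclosure_M(A)\cap B$; since $M$ is connected we have $\lambda_M(A)=1$ and $|A|,|B|\geq 2$. The first thing to check is that $e\neq f$: if $e=f$, then Lemma~\ref{lem:closurecomplement} converts $e\in\coclosure_M(A)$ into $e\notin\closure_M(B\smin e)$, i.e.\ $\rank_M(B\smin e)=\rank_M(B)-1$, while $e\in\closure_M(A)$ gives $\rank_M(A\cup e)=\rank_M(A)$; substituting these into the definition of $\lambda$ yields $\lambda_M(A\cup e)=\lambda_M(A)-1=0$, so $(A\cup e,\,B\smin e)$ would be a $1$-separation of $M$ (both parts nonempty because $|B|\geq 2$), contradicting connectivity.

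Next I would record two facts. By Lemma~\ref{lem:closurecomplement}, $f\in\coclosure_M(A)$ gives $f\notin\closure_M(B\smin f)$, so $f$ is a coloop of the restriction $M|B$; hence \emph{every} circuit of $M$ through $f$ meets $A$ (a circuit through $f$ avoiding $A$ would be a circuit of $M|B$ through a coloop). Separately, since $e\in\closure_M(A)$, adding $e$ to a basis of $M|A$ yields a circuit $C$ with $e\in C\subseteq A\cup e$; in particular $C$ meets both $A$ and $B$, and $C=(C\cap A)\cup\{e\}$.

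For the decisive step, I would use connectivity of $M$ to find a circuit $P$ of $M$ containing both $e$ and $f$ (possible as $e\neq f$); by the first fact $P$ meets $A$, so $P$ meets both $A$ and $B$. Applying Lemma~\ref{lem:circsacross2sep} to $C$ and $P$ then shows that $Q:=(C\cap A)\cup(P\cap B)$ is a circuit of $M$. But $e\in P\cap B$, so $C=(C\cap A)\cup\{e\}\subseteq Q$, and since no circuit properly contains another, $C=Q$ --- which contradicts $f\in P\cap B\subseteq Q$ and $f\notin C$ (as $C\subseteq A\cup e$ and $f\in B\smin e$). The one place that takes thought is precisely this last step: one must combine $C$ with a circuit through \emph{both} $e$ and $f$ (rather than, say, with a cocircuit through $f$ and the circuit/cocircuit intersection bound), so that $Q$ ends up containing all of $C$ together with the stray element $f$; the rest is routine bookkeeping.
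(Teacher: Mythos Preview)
Your proof is correct and follows exactly the approach the paper indicates: the paper omits the argument but says it is ``straightforward with Lemma~\ref{lem:circsacross2sep},'' and your decisive step is precisely an application of that lemma to the circuits $C\subseteq A\cup e$ and $P\ni e,f$. The preliminary checks ($e\neq f$; every circuit through $f$ meets $A$) are routine, and the final containment $C\subseteq Q$ forcing $C=Q\ni f$ is the natural way to extract the contradiction.
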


Next we consider the case in which $E(N)$ intersects $B$ in exactly one element. 

\begin{lemma}\label{lem:2sepminorintersect}
	Let $M$ be a connected matroid, $N$ a minor of $M$ with no loops and coloops, and $(A,B)$ a 2-separation of $M$ with $B\cap E(N) = \{f\}$. If $f$ is not in series or in parallel with any other element in $M$, then there exists an element $e\in B\smin f$ such that $M\delete e$ and $M\contract e$ both contain $N$ as a minor.	
\end{lemma}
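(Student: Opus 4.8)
The plan is to normalise the configuration and then reduce to a situation covered by Corollary~\ref{cor:2sepminordelcon}. First observe that the statement is self-dual: passing from $(M,N)$ to $(M^*,N^*)$ interchanges $\closure$ with $\coclosure$ and deletion with contraction and preserves every hypothesis, so by Lemma~\ref{lem:2sepclosurecoclosuresmall} I may assume $\coclosure_M(A)\cap B=\emptyset$. By Lemma~\ref{lem:closurecomplement} this means every $b\in B$ lies in $\closure_M(B\smin b)$, so $\rank_M(B\smin b)=\rank_M(B)$ and no $b\in B$ is a coloop of $M$. The separation is exact, so $\lambda_M(B)=1$; and $|B|\ge 3$, since otherwise Lemma~\ref{lem:simplecosimple2sep} would make $B$ a series or parallel pair, contradicting the hypothesis on $f$. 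Fix disjoint sets $C,D$ with $C$ independent, $D$ coindependent and $N=M\contract C\delete D$; since $E(N)\cap B=\{f\}$, the set $B\smin f$ is partitioned by $C$ and $D$, and $f\notin C\cup D$. Finally, since $f\in\closure_M(B\smin f)$,
\[
\lambda_M(A\cup f)=\rank_M(A\cup f)+\rank_M(B)-\rank(M),
\]
which equals $1$ when $f\in\closure_M(A)$ and $2$ otherwise; this dichotomy organises the proof.

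\emph{Case $f\in\closure_M(A)$.} Here $(A\cup f,\,B\smin f)$ is an exact $2$-separation of $M$ whose small side misses $E(N)$. If some $e\in B\smin f$ lies outside $\closure_M(A\cup f)\cup\coclosure_M(A\cup f)$, then Corollary~\ref{cor:2sepminordelcon} yields exactly the required element. Otherwise Lemma~\ref{lem:2sepclosurecoclosuresmall}, applied to this separation, puts $B\smin f$ inside $\closure_M(A\cup f)$ or inside $\coclosure_M(A\cup f)$. In the first subcase $\closure_M(A\cup f)=\closure_M(A)$ gives $B\subseteq\closure_M(A)$, hence $\rank_M(B)=\lambda_M(B)=1$ and $B$ is a parallel class, impossible since $|B|\ge 3$ and $f$ lies in no parallel pair. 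In the second subcase $B\smin f\subseteq\coclosure_M(A\cup f)$; then Lemma~\ref{lem:2sepminorcondel} applied to $(A\cup f,B\smin f)$ shows that $M\delete(B\smin f)$ or $M\contract(B\smin f)$ has $N$, and combining this with the fact that (by the subcase hypothesis, via Lemma~\ref{lem:closurecomplement}) each element of $B\smin f$ lies in a circuit with $f$ inside $B$, so is freely removable on the $B$-side, should pin down a flexible $e$ after one short uncrossing.

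\emph{Case $f\notin\closure_M(A)$ (so also $f\notin\coclosure_M(A)$).} This is where the real work lies, since moving $f$ across now yields only an exact $3$-separation, to which Corollary~\ref{cor:2sepminordelcon} does not apply, and one must argue inside the given separation $(A,B)$. The first point is that $B\smin f$ cannot be contained in $\closure_M(A)\cup\coclosure_M(A)$: by the displayed computation together with Lemma~\ref{lem:2sepclosurecoclosuresmall} this would again force $B$ to be a parallel class. So there is $e\in B\smin f$ with $e\notin\closure_M(A)\cup\coclosure_M(A)$, and for any such $e$ the pair $(A,B\smin e)$ is an exact $2$-separation of both $M\delete e$ and $M\contract e$, with $f$ still the only element of $E(N)$ on the small side. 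It remains to choose $e$ so that both $M\delete e$ and $M\contract e$ retain $N$, and I expect this to be the main obstacle and the only place the full strength of the hypothesis on $f$ is needed. The approach would be to play the realisation $N=M\contract C\delete D$ against the separation: with $B\smin f=(B\cap C)\cup(B\cap D)$, one aims to show that the only \emph{rigid} elements of $B\smin f$ --- those forced to be contracted in every realisation of $N$ (so $M\delete e$ loses $N$), or forced to be deleted in every realisation (so $M\contract e$ loses $N$) --- already lie in $\closure_M(A)\cup\coclosure_M(A)$, so that the generic $e$ above is automatically flexible. Establishing this should rely on Lemma~\ref{lem:2sepminorcondel} applied to the induced $2$-separations $(A,B\smin e)$ on $M\delete e$ and $M\contract e$, together with the closure identities above; the hypothesis that $f$ lies in no series or parallel pair enters precisely to prevent deleting or contracting such an $e$ from creating a series or parallel pair meeting $\{f\}$, which is the configuration that would obstruct transferring the minor $N$.
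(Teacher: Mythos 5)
Your proposal does not complete the argument, and you effectively say so yourself in both cases. In your first case ("$f\in\closure_M(A)$", final subcase) you conclude with "should pin down a flexible $e$ after one short uncrossing" without exhibiting the element or the uncrossing; and in your second case you write "it remains to choose $e$ so that both $M\delete e$ and $M\contract e$ retain $N$, and I expect this to be the main obstacle," followed by a sketch ("one aims to show that the only rigid elements \ldots already lie in $\closure_M(A)\cup\coclosure_M(A)$") that is an aspiration, not an argument. This is precisely the heart of the lemma: an $e\notin\closure_M(A)\cup\coclosure_M(A)$ preserves the exact $2$-separation under both removals, but since $f$ remains on the $B$-side you cannot invoke Corollary~\ref{cor:2sepminordelcon}, and nothing in your proposal tells you why such an $e$ can be chosen so that the minor $N$ survives both operations.

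The paper's proof takes a quite different route that sidesteps your case analysis entirely: it considers a minimal counterexample with respect to $|B|$, takes a partition of $B\smin f$ into $C$ and $D$ realising $N$, and picks any $e\in D$ (so $M\delete e$ trivially retains $N$). If $M\contract e$ fails to retain $N$, it must be because there is a $2$-separation of $M\contract e$ with $f$ separated from the rest of $E(N)$; adding $e$ back gives a $2$-separation of $M$, which is then uncrossed (Lemma~\ref{lem:uncrossing}) with $(A,B)$ to yield a $2$-separation with a strictly smaller $B$-side still intersecting $E(N)$ only in $f$, contradicting minimality. The degenerate case $D=\emptyset$ is handled by duality. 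The minimality hypothesis plus uncrossing is the missing idea in your approach: it converts "why does $N$ survive?" into a connectivity question and lets the induction absorb the difficulty. Without that, your closure-based dichotomy has no mechanism to rule out the bad case, and I do not see how to repair it along the lines you sketch.
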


\begin{proof}
	Suppose the lemma is false, and consider a counterexample with $|B|$ minimal. Let $C$ and $D$ be disjoint subsets of $B\smin f$ such that $M\contract C \delete D$ has $N$ as a minor, and pick $e \in D$. If there is a circuit $X$ of $M\contract e$ using $f$ and at least one element of $A$, then Lemma \ref{lem:circsacross2sep} implies that $M\contract e$ has $N$ as a minor. 
	
	Hence there is a separation $(A', B')$ of $M\contract e$ such that $E(N)\smin f\subseteq A'$ and $f \in B'$. Thus $(A'\cup e, B')$ is a 2-separation of $M$. By uncrossing with $(A,B)$, it follows that $(A'\cup A \cup e, B'\cap B)$ is a 2-separation for $M$. But this contradicts the minimality of $B$.
	
	The only remaining possibility is that $D$ is empty. But then, by duality, also $C = \emptyset$, a contradiction.
\end{proof}


\subsection{3-connectivity and fans} 
\label{ssec:3-connectivity}
Recall the following lemma by Bixby:

\begin{lemma}[{\citet{Bix82}; see also \citet[{Proposition 8.7.3}]{ox2}}]\label{lem:bixby}
	Let $M$ be a 3-connected matroid, and $e \in E(M)$. Then at least one of $M\contract e$ and $M\delete e$ has no non-minimal 2-separations.
\end{lemma}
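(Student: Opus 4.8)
The plan is to assume the conclusion fails and to derive a contradiction by uncrossing, inside $M$ itself, the two offending $2$-separations. First I would clear away a degenerate case: a non-minimal $2$-separation requires at least six elements, so if $|E(M)|\le 6$ neither $M\contract e$ nor $M\delete e$ has one and there is nothing to prove. So assume $|E(M)|\ge 7$, and suppose for contradiction that $M\delete e$ has a $2$-separation $(X_1,Y_1)$ with $|X_1|,|Y_1|\ge 3$ and $M\contract e$ has a $2$-separation $(X_2,Y_2)$ with $|X_2|,|Y_2|\ge 3$.

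The first substantive step is to transplant both separations into $M$. Since $M$ is $3$-connected we have $\lambda_M(X_1)\ge 2$, while $\lambda_{M\delete e}(X_1)\le 1$ and Lemma \ref{lem:connprop}\ref{it:con4} give $\lambda_M(X_1)\le\lambda_{M\delete e}(X_1)+1\le 2$; hence $\lambda_M(X_1)=\lambda_M(Y_1)=2$ and $(X_1,Y_1\cup e)$ is an exact $3$-separation of $M$. Applying Lemma \ref{lem:3sepgutscoguts} in $M^*$ (using $M\delete e=(M^*\contract e)^*$ and Lemma \ref{lem:connprop}\ref{it:con3}) then turns the fact that $(X_1,Y_1)$ separates $M\delete e$ into the statement $e\in\coclosure_M(X_1)\cap\coclosure_M(Y_1)$, equivalently $e\notin\closure_M(X_1)\cup\closure_M(Y_1)$ by Lemma \ref{lem:closurecomplement}. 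Dually, I would get $\lambda_M(X_2)=\lambda_M(Y_2)=2$ and $e\in\closure_M(X_2)\cap\closure_M(Y_2)$, that is, $e\notin\coclosure_M(X_2)\cup\coclosure_M(Y_2)$. So $e$ lies in the coguts of the $3$-separation coming from $M\delete e$ and in the guts of the one coming from $M\contract e$.

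Next I would pass to the four quadrants $A:=X_1\cap X_2$, $B:=X_1\cap Y_2$, $C:=Y_1\cap X_2$, $D:=Y_1\cap Y_2$. Each quadrant $Q$ is contained in one of $X_1,Y_1$, so $e\notin\closure_M(Q)$, and in one of $X_2,Y_2$, so $e\notin\coclosure_M(Q)$; by Lemma \ref{lem:connprop}\ref{it:con1} this forces $\lambda_M(Q\cup e)=\lambda_M(Q)+1$ for all four. Feeding $X_1$ and $X_2$ into submodularity (Lemma \ref{lem:connprop}\ref{it:con5}, together with \ref{it:con2} to identify the complementary side) gives $4=\lambda_M(X_1)+\lambda_M(X_2)\ge\lambda_M(A)+\lambda_M(D\cup e)=\lambda_M(A)+\lambda_M(D)+1$, so $\lambda_M(A)+\lambda_M(D)\le 3$; running the same argument on $X_1$ and $Y_2$ gives $\lambda_M(B)+\lambda_M(C)\le 3$.

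To finish, note that each quadrant has at least four elements in its complement, so by $3$-connectivity of $M$ a quadrant $Q$ with $|Q|\ge 2$ has $\lambda_M(Q)\ge 2$. Hence $\lambda_M(A)+\lambda_M(D)\le 3$ forces $|A|\le 1$ or $|D|\le 1$, and $\lambda_M(B)+\lambda_M(C)\le 3$ forces $|B|\le 1$ or $|C|\le 1$; the four combinations give $|X_1|\le 2$, $|X_2|\le 2$, $|Y_2|\le 2$ or $|Y_1|\le 2$ respectively, each contradicting the choice of separations. The delicate point is the second paragraph --- correctly expressing ``$(X_i,Y_i)$ separates the minor'' as a guts/coguts condition on $e$ in $M$ and keeping the two dual pictures aligned; once that setup is in place, the uncrossing and the final count are routine.
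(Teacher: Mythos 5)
Your proof is correct, and it reproduces the standard uncrossing argument from the cited source (Bixby; Oxley, Proposition 8.7.3)---the paper itself does not give a proof but defers to that reference. The key steps are exactly as in the literature: both offending $2$-separations lift to exact $3$-separations of $M$ with $e$ in the coguts of one and the guts of the other (so $e\notin\closure_M(X_1)\cup\closure_M(Y_1)$ and $e\notin\coclosure_M(X_2)\cup\coclosure_M(Y_2)$); hence $\lambda_M(Q\cup e)=\lambda_M(Q)+1$ for each of the four quadrants; submodularity on the pairs $(X_1,X_2)$ and $(X_1,Y_2)$ then bounds $\lambda_M(A)+\lambda_M(D)\le 3$ and $\lambda_M(B)+\lambda_M(C)\le 3$; and $3$-connectivity forces one quadrant on each diagonal to have at most one element, which collapses one of the original separating sets to size at most two. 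All hypotheses are checked (the sizes $|E(M)|\ge 7$, $e$ neither loop nor coloop, the complements of quadrants large enough to invoke $3$-connectivity), so the argument is sound.
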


Recall that a set $T\subseteq E(M)$ is a \emph{triangle} if $M|T \cong U_{2,3}$, and a \emph{triad} if it is a triangle of $M^*$. 

\begin{lemma}[{Tutte's Triangle Lemma; see \citet[{Lemma 8.7.7}]{ox2}}]\label{lem:Tuttriang}
	Let $M$ be a 3-connected matroid with $|E(M)| \geq 4$, and let $T = \{e,f,g\}$ be a triangle such that neither $M\delete e$ nor $M\delete f$ is 3-connected. Then $M$ has a triad containing $e$ and exactly one of $f$ and $g$.
\end{lemma}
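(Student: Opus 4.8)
The plan is to exploit the failure of $3$-connectivity of \emph{both} $M\delete e$ and $M\delete f$: each supplies a separator, and whenever such a separator has a side of size two, Lemma~\ref{lem:simplecosimple2sep} turns it into a triad of $M$ of the required shape; the remaining configuration, in which no relevant separator is small, is then killed by an uncrossing against the $3$-connectivity of $M$. Throughout I use that $M$, being $3$-connected with $|E(M)|\ge 4$, has no loops, coloops, parallel pairs or series pairs, and that $M\delete e$ and $M\delete f$ are connected.

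\emph{A reduction valid for either deletion.} Fix $h\in\{e,f\}$ and a $2$-separation $(P,Q)$ of $M\delete h$; since $M\delete h$ is connected, $\lambda_{M\delete h}(P)=1$. As $h$ is not a coloop of $M$, $\rank(M)=\rank(M\delete h)$, so by Lemma~\ref{lem:connprop} $\lambda_M(P)\le 2$; and $\lambda_M(P)\le 1$ would make $(P,Q\cup h)$ a $2$-separation of the $3$-connected matroid $M$, so $\lambda_M(P)=2$, and symmetrically $\lambda_M(Q)=2$. Unwinding the rank identities, $h\notin\closure_M(P)\cup\closure_M(Q)$ and $\lambda_M(P\cup h)=2$. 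Moreover, if one side, say $P$, has $|P|=2$, then by Lemma~\ref{lem:simplecosimple2sep} the pair $P$ is parallel or series in $M\delete h$; it cannot be parallel (else it would be a parallel pair of $M$), so it is a cocircuit of $M\delete h$, whence $P$ or $P\cup h$ is a cocircuit of $M$; and $P$ is not one ($M$ has no series pairs), so $P\cup h$ is a triad of $M$ containing $h$.

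\emph{The easy cases.} Since $\{e,f,g\}$ is a triangle, $f\in\closure_M(\{e,g\})$ and $e\in\closure_M(\{f,g\})$; hence in any $2$-separation of $M\delete f$ the elements $e$ and $g$ lie on opposite sides, and in any $2$-separation of $M\delete e$ the elements $f$ and $g$ lie on opposite sides. So, if $M\delete e$ has a $2$-separation with a side of size two, that side is $\{f,p\}$ or $\{g,q\}$, and the reduction produces a triad $\{e,f,p\}$ or $\{e,g,q\}$ of $M$ --- a triad through $e$ that meets $\{f,g\}$ in exactly one element, as desired. Likewise, if $M\delete f$ has a $2$-separation whose side containing $e$ has size two, that side is $\{e,y\}$ and the reduction gives the triad $\{e,f,y\}$.

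\emph{The hard case, and the main obstacle.} It remains to derive a contradiction when every $2$-separation of $M\delete e$ has both sides of size at least three and every $2$-separation of $M\delete f$ has its $e$-side of size at least three. Choose such $2$-separations $(P,Q)$ of $M\delete e$ (with $f\in P$, $g\in Q$, and $|P|$ minimum) and $(Y,Z)$ of $M\delete f$ (with $e\in Y$, $g\in Z$, and $|Y|$ minimum). By the reduction, $P\cup e$ and $Y\cup f$ are exactly $3$-separating in $M$, each of the two $3$-separations keeping $\{e,f\}$ together on one side and $g\in\closure_M(\{e,f\})$ alone on the other. I would uncross these two $3$-separations inside $M$ using submodularity of $\lambda_M$ (Lemma~\ref{lem:connprop}) together with Lemma~\ref{lem:uncrossing}, and then push a resulting separating set back into $M\delete e$ or $M\delete f$ to contradict either the $3$-connectivity of $M$ or the minimality of $|P|$ or of $|Y|$. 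The subtlety --- and the reason this case is the crux --- is that a $3$-separation of $M$ need not become a $2$-separation after deleting an element (it may stay exactly $3$-separating, which is typical here because of the closure relations among $e$, $f$ and $g$), so the uncrossing must be steered so that it produces a set whose connectivity genuinely drops in the relevant minor, and one must also rule out the degenerate overlaps in which the intersections or unions are too small to count as separations. It is this bookkeeping --- keeping track of where $e$, $f$ and $g$ end up --- rather than any single clever step, that carries the argument.
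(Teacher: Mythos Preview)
The paper does not prove this lemma; it is quoted from Oxley with the attribution ``see \citet[Lemma 8.7.7]{ox2}'' and no argument is given. So there is no in-paper proof to compare against, only the standard textbook proof.

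Your reduction and your handling of the easy cases are correct and match the standard argument: the fact that $h\notin\closure_M(P)\cup\closure_M(Q)$ forces $f$ and $g$ onto opposite sides of any $2$-separation of $M\delete e$, and a size-two side then yields the desired triad via Lemma~\ref{lem:simplecosimple2sep}. Your decision to demand only that the \emph{$e$-side} of a $2$-separation of $M\delete f$ be large is also the right asymmetry, since a small $g$-side there produces a triad $\{f,g,z\}$ missing $e$.

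However, the ``hard case'' is not a proof but a plan. You set up the two exactly $3$-separating sets $P\cup e$ and $Y\cup f$ in $M$, state that you would uncross them, and then explicitly flag the obstacle --- that a $3$-separation of $M$ need not drop to a $2$-separation in $M\delete e$ or $M\delete f$ --- without resolving it. The actual argument requires you to carry out the submodularity step on a specific pair of sets (in the standard proof one uncrosses $X$ against $A\cup f$ and $X$ against $B$, where $(X,Y)$ and $(A,B)$ are your $(P,Q)$ and $(Y,Z)$), check which of the four corners contains at least two elements so that $3$-connectivity of $M$ bounds its $\lambda$ from below, and then verify that the resulting separating set, once $e$ is removed from it, is a genuine $2$-separation of $M\delete e$ that is strictly smaller on the $f$-side than $P$, contradicting the minimality of $|P|$. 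None of this bookkeeping is present; until it is, the proof is incomplete precisely at the point you identify as the crux.
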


Tutte's Triangle Lemma naturally leads to the notion of a \emph{fan}:

\begin{definition}\label{def:fan}
	Let $M$ be a matroid, and $F = (x_1, x_2, \ldots, x_k)$, $k\geq 3$, an ordered set of distinct elements of $E(M)$. We say that $F$ is a \emph{fan} of $M$ if $\{x_1,x_2,x_3\}$ is either a triangle or a triad, and for each $i \in \{1, \ldots, k-3\}$, if $\{x_i, x_{i+1},x_{i+2}\}$ is a triangle, then $\{x_{i+1}, x_{i+2},x_{i+3}\}$ is a triad, and if $\{x_i, x_{i+1},x_{i+2}\}$ is a triad, then $\{x_{i+1},x_{i+2},x_{i+3}\}$ is a triangle.
\end{definition}

A few trivial observations:

\begin{lemma}\label{lem:fanprops}
	Let $F = (x_1, x_2,\ldots,x_k)$ be a fan of a matroid $M$.
	\begin{enumerate}
		\item $F$ is a fan of $M^*$, with triangles and triads exchanged;
		\item $(x_k, x_{k-1},\ldots, x_1)$ is a fan of $M$;
		\item If $X \subseteq E(M)$ is fully closed, and $F$ is a maximal fan contained in $X$, then $F$ is a maximal fan in $E(M)$;
		\item If $k \geq 4$, and $1< l < k$, then neither $M\delete x_l$ nor $M\contract x_l$ is 3-connected.
	\end{enumerate}
\end{lemma}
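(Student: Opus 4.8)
The four parts are of rather different character, so I would prove them separately. Parts~(i) and~(ii) are immediate unwindings of Definition~\ref{def:fan}, part~(iii) is the only one carrying genuine content, and part~(iv) is a short consequence of a structural observation. As a preliminary I would record two elementary facts, both proved by a trivial induction on $i$ from the alternation clause of Definition~\ref{def:fan}: in any fan every consecutive triple $\{x_i,x_{i+1},x_{i+2}\}$ ($1\le i\le k-2$) is a triangle or a triad, and of any two consecutive triples at least one is a triangle and at least one is a triad.

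Part~(i) is then immediate: a set is a triad of $M$ exactly when it is a triangle of $M^*$, so the clause ``$\{x_1,x_2,x_3\}$ is a triangle or a triad'' is unchanged on passing to $M^*$, and the alternation clause is carried to itself with the words ``triangle'' and ``triad'' interchanged --- which is precisely the effect of passing to $M^*$. Part~(ii) is equally immediate: reversing $F$ reverses the order in which the consecutive triples occur but alters neither the triples themselves (as sets) nor the ``immediately followed by'' relation among them, and the alternation requirement is symmetric under this reversal.

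For part~(iii), let $X$ be fully closed and let $F$ be a fan with $\{x_1,\dots,x_k\}\subseteq X$ that is maximal among such fans; I claim $F$ is maximal among all fans of $M$. If not, then, after possibly replacing $F$ by its reverse (using~(ii)), there is an element $y\notin\{x_1,\dots,x_k\}$ with $(y,x_1,\dots,x_k)$ a fan of $M$. Its first triple $\{y,x_1,x_2\}$ is a triangle or a triad of $M$, that is, a circuit or a cocircuit of $M$ containing $y$ together with $x_1,x_2\in X$. If a circuit, then $y\in\closure_M(\{x_1,x_2\})\subseteq\closure_M(X)=X$; if a cocircuit, then $y\in\coclosure_M(\{x_1,x_2\})\subseteq\coclosure_M(X)=X$. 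Either way $y\in X$, so $(y,x_1,\dots,x_k)$ is a fan contained in $X$ that properly extends $F$, contradicting the maximality of $F$ among fans contained in $X$. Note that the hypothesis enters precisely in needing $X$ to be \emph{both} closed and coclosed, so that the new element can be absorbed whichever type its triple turns out to have.

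For part~(iv), the point is that the internal element $x_l$ lies in both a triangle and a triad of $M$. Since $k\ge4$ and $1<l<k$, the element $x_l$ lies in the consecutive triple $\{x_{l-1},x_l,x_{l+1}\}$, and also in $\{x_{l-2},x_{l-1},x_l\}$ when $l\ge3$ and in $\{x_l,x_{l+1},x_{l+2}\}$ when $l\le k-2$; as $2\le l\le k-1$ and $k\ge4$, at least one of the latter two triples is present, and it is consecutive to the first. By the preliminary fact, of these two consecutive triples one is a triangle $T$ and one is a triad $T^*$, both containing $x_l$. Then $T\smin x_l$ is a two-element circuit (a parallel pair) of $M\contract x_l$, and $T^*\smin x_l$ is a two-element cocircuit (a series pair) of $M\delete x_l$; since $|E(M)|\ge k\ge4$, the usual argument --- such a two-element class together with its complement forms a $2$-separation --- shows that neither $M\contract x_l$ nor $M\delete x_l$ is $3$-connected. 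I expect part~(iii) to be the only step with real content, since it is the one place where a hypothesis (full closure) is genuinely used rather than Definition~\ref{def:fan} merely rearranged; the only other care-point is the index bookkeeping in~(iv) that makes the hypotheses $k\ge4$ and $1<l<k$ do their work.
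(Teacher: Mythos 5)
The paper offers no proof of this lemma (it is prefaced by ``A few trivial observations''), so there is no internal argument to compare yours against; I assess it on its own. Your treatments of (i) and (ii) are correct and appropriately brief, and your proof of (iii) is correct and identifies the real content: the candidate extension element lies in a triangle or triad with two consecutive elements of $F$, hence in $\closure_M(X)$ or $\coclosure_M(X)$, hence in $X$ because $X$ is fully closed; the reduction to a one-element extension at an end of $F$ is legitimate under the standard reading of ``maximal fan.''

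Your part (iv) has a genuine gap at the boundary. You produce a triangle $T$ and a triad $T^*$ through $x_l$, giving a parallel pair in $M\contract x_l$ and a series pair in $M\delete x_l$, and then invoke ``the usual argument'' that a two-element parallel or series class forces a $2$-separation; but that argument needs the complement of the pair to have at least two elements, i.e.\ the matroid in hand (which is $M$ with one element removed) must have at least four elements. You wrote $|E(M)|\ge k\ge 4$, whereas what is required is $|E(M)|\ge 5$. This is not mere pedantry: take $M = U_{2,4}$ with fan $(x_1,x_2,x_3,x_4)$ and $l=2$. Then $M\contract x_2\cong U_{1,3}$ and $M\delete x_2\cong U_{2,3}$, and both are $3$-connected under the Tutte definition the paper uses (Definition~\ref{def:connected}), since a $2$-separation needs both sides of size at least two and there are only three elements left. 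So the stated conclusion actually fails for $U_{2,4}$. In fairness, the lemma as printed shares this oversight, and the paper only ever applies fan results to $3$-connected matroids that are neither wheels nor whirls, which excludes $U_{2,4}=\whirl{2}$; but a correct proof of the literal statement should either add the hypothesis $|E(M)|\ge 5$ (equivalently exclude $U_{2,4}$) or note the exception explicitly.
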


The following lemma is due to \citet{OW00}.

\begin{lemma}\label{lem:endoffanOW}
	Let $M$ be a 3-connected matroid that is not a wheel or a whirl, and let $F$ be a maximal fan of $M$ with $k \geq 3$ elements. Then the elements of $F$ can be ordered $(x_1, x_2, \ldots, x_k)$ such that $(x_1,x_2,\ldots,x_k)$ is a fan, one of $M\delete x_1,M\contract x_1$ is 3-connected, and one of $M\delete x_k, M\contract x_k$ is 3-connected.
\end{lemma}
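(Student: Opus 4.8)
The plan is to isolate a single fact about the ``triangle end'' of a fan and then recover both ends by duality and fan reversal. Say an element $e$ of a matroid is \emph{removable} if one of $M\delete e$ and $M\contract e$ is $3$-connected. We may assume $|E(M)|\ge5$: the $3$-connected matroids with at most four elements that contain a fan of at least three elements are $U_{2,3}$ and $U_{1,3}$, all of whose elements are removable, and $U_{2,4}$, which is a whirl and so is excluded. I would first note that if $\{x_1,x_2,x_3\}$ is a triangle then $M\contract x_1$ has $\{x_2,x_3\}$ as a parallel pair and, being connected, has $(\{x_2,x_3\},\,E(M)\smin\{x_1,x_2,x_3\})$ as a $2$-separation; so at a triangle end removability has to come from deletion. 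Using Lemma~\ref{lem:fanprops} to pass to $M^{*}$ (which swaps triangles with triads and deletion with contraction) and to reverse the fan, it then suffices to prove the claim $(\star)$ that if $(x_1,\dots,x_k)$ is a maximal fan of a $3$-connected matroid $M$ that is not a wheel or whirl, with $k\ge4$, $|E(M)|\ge5$, and $\{x_1,x_2,x_3\}$ a triangle, then $M\delete x_1$ is $3$-connected --- together with the case $k=3$, which I would handle separately because every ordering of a triangle (or triad) is already a fan.

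For $k=3$, write $F=\{x_1,x_2,x_3\}$ and, by duality, take it to be a triangle. If two of its elements, say $x_1$ and $x_2$, have non-$3$-connected deletion, then Tutte's Triangle Lemma~\ref{lem:Tuttriang} supplies a triad containing $x_1$ and exactly one of $x_2,x_3$, whose third element $w$ lies outside $F$ (a three-element set that is simultaneously a triangle and a triad forces $M\cong U_{2,4}$). But then $(w,x_1,x_2,x_3)$ or $(w,x_1,x_3,x_2)$ is a fan of length four containing $F$, contradicting maximality. Hence at least two elements of $F$ have $3$-connected deletion, and I would order $F$ with two such elements at its ends.

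For $k\ge4$ I would prove $(\star)$ by contradiction. Assume $M\delete x_1$ is not $3$-connected; since $x_2$ is internal, Lemma~\ref{lem:fanprops} shows $M\delete x_2$ is not $3$-connected either, so Tutte's Triangle Lemma applied to $\{x_1,x_2,x_3\}$ produces a triad $T^{*}=\{x_1,a,w\}$ with $a\in\{x_2,x_3\}$ and $w\notin\{x_1,x_2,x_3\}$. If $a=x_2$, then $(w,x_1,x_2,x_3,\dots,x_k)$ obeys the fan alternation (leading triad $T^{*}$, then the triangle $\{x_1,x_2,x_3\}$, then the triad $\{x_2,x_3,x_4\}$ of the old fan, then the rest unchanged); if $w$ is genuinely new this contradicts maximality, while if $w=x_j$ for some $j\ge4$ I would derive a contradiction from orthogonality between the cocircuit $\{x_1,x_2,x_j\}$ and a triangle of the fan through $x_j$ --- the stray case $j=k$ with $k$ even being the one where $T^{*}$ closes the fan up, and there one shows $M$ is forced to be a wheel or whirl. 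If $a=x_3$, orthogonality of the cocircuit $\{x_1,x_3,w\}$ with the triangle $\{x_3,x_4,x_5\}$ (present once $k\ge5$) forces $w\in\{x_4,x_5\}$: the case $w=x_4$ gives the triads $\{x_1,x_3,x_4\}$ and $\{x_2,x_3,x_4\}$, which eliminate to a series pair inside $\{x_1,x_2\}$ --- impossible; and $w=x_5$ is ruled out by orthogonality with a later triangle of the fan through $x_5$ when $k\ge7$, leaving $k\in\{4,5,6\}$, in which a rank and corank count for $\{x_1,\dots,x_k\}$ again forces $M$ to be one of a handful of small wheels or whirls. This establishes $(\star)$. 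Finally, given any maximal fan with $k\ge4$, applying $(\star)$ to that fan (in $M$ if $\{x_1,x_2,x_3\}$ is a triangle, in $M^{*}$ otherwise) shows $x_1$ is removable, and applying it to the reversed fan shows $x_k$ is removable, so the given ordering has the required property.

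The part I expect to be the genuine obstacle is the case $a=x_3$ of $(\star)$, together with the short fans $k\in\{4,5,6\}$ and the wrap-around stray case: these are precisely the places where one must invoke, and carefully exploit, the hypothesis that $M$ is not a wheel or a whirl, and this involves some delicate orthogonality and cocircuit-elimination bookkeeping plus the exclusion of the small matroids that are densely populated by coinciding triangles and triads. The remaining ``generic'' step --- extending the fan when $a=x_2$ --- is routine.
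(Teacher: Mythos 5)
The paper does not prove this lemma: it is quoted from \citet{OW00} with no in-paper argument, so there is nothing of the authors' to compare your attempt against. Judged on its own, your reduction is set up correctly. The observation that a triangle end can only be removed by deletion, the passage to claim $(\star)$ via duality and fan reversal, the separate $k=3$ argument, the application of Tutte's Triangle Lemma, and the generic cases (the $a=x_2$, $w$ new extension and the orthogonality arguments that eliminate most instances of $w=x_j$) are all sound, and this is the standard route.

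The problem is that the cases you yourself flag as ``the genuine obstacle'' are asserted rather than proved, and they are exactly where the hypothesis that $M$ is not a wheel or whirl must be cashed in. In the wrap-around case ($w=x_k$, $k$ even, giving the extra triad $\{x_1,x_2,x_k\}$) and the short-fan subcases with $a=x_3$, it is not at all immediate that $M$ ``is forced to be a wheel or whirl.'' One must first show $\lambda_M(\{x_1,\dots,x_k\})\le 1$ (the extra triad drops the usual bound $\lambda\le 2$ for a fan by one, via a rank/corank count), so $3$-connectivity forces $|E(M)\smin\{x_1,\dots,x_k\}|\le 1$; then one must rule out $|E(M)|=k+1$, and finally, when the fan is all of $E(M)$, determine enough triangles and triads to pin $M$ down as a wheel or a whirl. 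None of these steps appears in your sketch, and together they constitute the real content of Oxley and Wu's proof. There is also a smaller slip: in the $w=x_4$ subcase, cocircuit elimination on $x_4$ between $\{x_1,x_3,x_4\}$ and $\{x_2,x_3,x_4\}$ yields a cocircuit inside $\{x_1,x_2,x_3\}$, which in a $3$-connected matroid with at least four elements must equal $\{x_1,x_2,x_3\}$; the contradiction is that this set is then simultaneously a triangle and a triad, so $\lambda_M(\{x_1,x_2,x_3\})=1$, giving a $2$-separation once $|E(M)|\ge5$ --- not a two-element ``series pair inside $\{x_1,x_2\}$.''
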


Note that, in a fan of length at least 4, the ends of the fan, $x_1$ and $x_k$, are the same for any ordering, and, in a fan of length at least 5, the order is completely fixed. We will upgrade Oxley and Wu's result so that we can preserve a minor, at the cost of a slightly worse bound on the size:

\begin{lemma}\label{lem:endoffan}
	Let $M$ be a 3-connected matroid that is not a wheel or a whirl, let $N$ be a minor of $M$ without loops or coloops, and let $F = (x_1,x_2,\ldots,x_k)$ be a maximal fan of $M$ with $k\geq 4$ elements. If $|E(N)\cap F| \leq 1$, then one of $M\delete x_1, M\contract x_1, M\delete x_k, M\contract x_k$ is 3-connected with $N$ as a minor.
\end{lemma}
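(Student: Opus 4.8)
The plan is to reduce, by duality and by reversing the fan, to a standard situation at one end of $F$, and then to ``peel'' elements off that end one at a time while carrying the minor $N$ along. Since $k\ge 4$, the two ends of $F$ are distinct, and since $|E(N)\cap F|\le 1$ at least one of them avoids $E(N)$. The classes ``$3$-connected and not a wheel or whirl'' and ``having no loops or coloops'' are each self-dual, and the reverse of a fan is a fan (Lemma~\ref{lem:fanprops}); so, dualising and reversing if necessary, I may assume $x_1\notin E(N)$ and that $\{x_1,x_2,x_3\}$ is a triangle. Then $x_2,x_3$ form a parallel pair in $M\contract x_1$, so $M\contract x_1$ is not $3$-connected (note $|E(M)|\ge 5$ automatically, since the only $3$-connected matroid on $4$ elements is the whirl $\whirl{2}$); hence by Lemma~\ref{lem:endoffanOW} it is $M\delete x_1$ that is $3$-connected. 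If $N\preceq M\delete x_1$ we are done, so assume not; as $x_1\notin E(N)$, this forces $N\preceq M\contract x_1$.

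For the first peeling step, observe that in $M\contract x_1$ the pair $\{x_2,x_3\}$ is a $2$-separating parallel pair, and crucially $M\contract\{x_1,x_2,x_3\}=(M\delete x_1)\contract\{x_2,x_3\}$ is a minor of $M\delete x_1$, so $N$ is \emph{not} a minor of $M\contract\{x_1,x_2,x_3\}$. Combining this with Lemma~\ref{lem:2sepminorcondel} (when $\{x_2,x_3\}\cap E(N)=\emptyset$), or with a direct argument from $N$ being loopless (when $\{x_2,x_3\}$ meets $E(N)$), I would conclude that $N$ is a minor of $M\contract x_1\delete z$ for some $z\in\{x_2,x_3\}\smin E(N)$; and when $\{x_2,x_3\}\cap E(N)=\emptyset$ that I may delete both $x_2$ and $x_3$, in which case the triad $\{x_2,x_3,x_4\}$ forces $x_4$ to be a coloop of $M\contract x_1\delete\{x_2,x_3\}$, so that (as $N$ has no coloop) $x_4\notin E(N)$ and I may delete $x_4$ as well.

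The resulting matroid is again not $3$-connected, still has $N$ as a minor, and carries the rest of $F$ as a fan whose first triple has the opposite type; so I repeat the analysis one step further along. Continuing this alternating contract/delete peeling down the fan, at each step I would use the $2$-separation machinery (Lemmas~\ref{lem:circsacross2sep}, \ref{lem:2sepclosurecoclosuresmall}, \ref{lem:2sepminorcondel}, \ref{lem:2sepminorintersect} and Corollary~\ref{cor:2sepminordelcon}) to keep $N$ on the desired side, and I would use that $N$ has neither loops nor coloops at exactly the moments when a fan element of $E(N)$ threatens to obstruct the peeling: such an element would be forced to be a loop or coloop of the current minor, which is impossible. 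When the fan is used up I am left with a minor of $M$ that contains $N$ and whose groundset meets $F$ only in $x_k$; tracking the final step shows $N$ is a minor of $M\delete x_k$ or of $M\contract x_k$, and a loop/coloop analysis at the $x_k$\nobreakdash-end (dual to the $x_1$\nobreakdash-end analysis, using that one of $M\delete x_k,M\contract x_k$ is $3$-connected by Lemma~\ref{lem:endoffanOW}) pins down the one containing $N$ as the $3$-connected one. The short fans $k=4$ and $k=5$ are too small for the iteration and are handled by a direct case analysis.

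The heart of the argument — and where I expect the real work to lie — is making this iteration rigorous: verifying that the parallel/series alternation propagates down $F$ exactly as described, that $N$ stays on the correct side of each successive $2$-separation, and, the subtlest point, that the peeling reaches $x_k$ on the $3$-connected side rather than the other. Uncrossing (Lemma~\ref{lem:uncrossing}) and Bixby's Lemma~\ref{lem:bixby} should be needed to show the intermediate matroids have no separations beyond those predicted by the fan, and the third part of Lemma~\ref{lem:fanprops} to keep maximality under control. The hypotheses $k\ge 4$ and $|E(N)\cap F|\le 1$ enter precisely to guarantee that there is a clean end to start the peeling from and at most one obstructing fan element along the way.
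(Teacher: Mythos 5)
Your setup paragraph (reverse the fan so that $x_1 \notin E(N)$, dualise so that $\{x_1,x_2,x_3\}$ is a triangle, deduce from Lemma~\ref{lem:endoffanOW} that $M\delete x_1$ is $3$-connected, and reduce to the case $N \preceq M\contract x_1 =: M'$) matches the paper exactly. After that the two arguments diverge, and yours has a genuine gap. The paper does not peel along the fan at all: it first shows, under the contradiction hypothesis, that $x_k \in E(N)$, by using the $2$-separating set $F\smin x_1$ of $M'$ together with Corollary~\ref{lem:2sepminorconoutclosure} to conclude that otherwise $N \preceq M'\contract x_k \preceq M\contract x_k$, which is the $3$-connected end. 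It then makes one more observation: $N \preceq M'\contract x_2$, hence $N \preceq M\contract x_2$, and since $\{x_1,x_3\}$ is parallel in $M\contract x_2$ this forces $N \preceq M\delete x_1$, the desired contradiction. That single extra contraction is the heart of the argument and has no analogue in your sketch.

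The case $x_k \in E(N)$ is exactly where your plan fails, and it is the case that must be handled, since the paper shows it is the only one that can occur under the contradiction hypothesis. Your peeling removes fan elements from $M'$ step by step, but it can never remove $x_k$ when $x_k \in E(N)$; and then your claim that at the end ``$N$ is a minor of $M\delete x_k$ or of $M\contract x_k$'' cannot be true, since no minor that excises $x_k$ can contain $N$. What is needed there is $N \preceq M\delete x_1$, and nothing in your iteration produces it. There is also a secondary difficulty even when $x_k \notin E(N)$: your first peel gives $N \preceq M'\delete\{x_2,x_3,x_4\}$, a minor of $M\delete x_4$ but not obviously of $M\contract x_4$; when $k=4$ this is rescued only because $x_4$ happens to be a coloop of $M'\delete\{x_2,x_3\}$ (so delete and contract coincide), and nothing in your sketch guarantees that this coincidence recurs so that the deletion-heavy peeling lands on the $3$-connected side of a triad at the $x_k$-end. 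You flagged that parity question as ``the subtlest point'', but it is more than a technicality, and the paper avoids it entirely by reaching $M'\contract x_k$ in one step via the large $2$-separating set $F\smin x_1$ rather than by accumulated small removals.
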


\begin{proof}
	Suppose the theorem fails. To simplify notation we will assume $k$ to be even, leaving the analogous case for odd $k$ to the reader. By reversing the fan if necessary, we may assume $x_1 \not\in E(N)$. By dualizing $M$ and $N$ if necessary, we may assume that $\{x_1,x_2,x_3\}$ is a triangle (and therefore that $\{x_{k-2},x_{k-1},x_k\}$ is a triad). Hence $M\delete x_1$ is 3-connected. Suppose $M\delete x_1$ does not have $N$ as a minor. Then $M\contract x_1$ has $N$ as a minor. Let $M' := M\contract x_1$. The set $F\smin x_1$ is 2-separating in $M'$. 
	
	\begin{claim}
		$x_k \in E(N)$.
	\end{claim}
	
	\begin{subproof}
		Suppose this is not the case. Note that $F\smin \{x_1,x_k\}$ is a separator of $M'\delete x_k$. First, if $E(N)\cap (F\smin\{x_1,x_k\}) = \{f\}$, then $M'\delete x_k$ cannot have $N$ as a minor, since in such a minor $f$ would be either a loop or a coloop. Hence $M'\contract x_k$ has $N$ as a minor.	Next, if $E(N)\cap (F\smin x_1) = \emptyset$, then Corollary \ref{lem:2sepminorconoutclosure} implies that $M'\contract x_k$ has $N$ as a minor. In both cases it follows that $M\contract x_k$ has $N$ as a minor. But that matroid is 3-connected, and the result holds.
	\end{subproof}
	
	\begin{claim}
		$M' \contract x_2$ has $N$ as a minor.
	\end{claim}
	
	\begin{subproof}
		Note that $\{x_2,x_3\}$ form a parallel pair in $M'$. If $k = 4$, then $x_k$ is a coloop in $M'\delete \{x_2,x_3\}$, so $M'\contract x_2$ has $N$ as a minor.  If $k > 4$, then $\{x_2,x_3,x_4\}$ is a 2-separating set in $M'$ disjoint from $E(N)$. Moreover, we have $x_2 \in \coclosure_{M'}(\{x_3,x_4\})$, so by Lemma \ref{lem:closurecomplement}, $x_2 \not \in \closure_{M'}(E(M')\smin \{x_2,x_3,x_4\})$. From Corollary \ref{lem:2sepminorconoutclosure} it then follows that $M'\contract x_2$ has $N$ as a minor. 		
	\end{subproof}

	Therefore $M\contract x_2$ has $N$ as a minor. In that matroid $x_1$ and $x_3$ are in parallel, from which it follows that $M\delete x_1$ has $N$ as a minor, a contradiction.
\end{proof}


\subsection{Tangles and their matroids} 
\label{sub:tangles}

Instead of using branch width directly, we will use the notion of a \emph{tangle}, first defined by \citet{RSX} for hypergraphs, and extended to matroids by \citet{Dha96}. 
Our definitions follow \citet{GGRW06}.

\begin{definition}\label{def:tangle}
	Let $M$ be a matroid, and $\tangle$ a collection of subsets of $E(M)$. Then $\tangle$ is a \emph{tangle of order $\theta$} of $M$ if
	\begin{enumerate}
		\item\label{it:tan1} For all $X \in \tangle$, $\lambda_M(X) < \theta$;
		\item\label{it:tan2} For all $X\subseteq E(M)$ with $\lambda_M(X) < \theta$, either $X \in \tangle$ or $E(M)\smin X \in \tangle$;
		\item\label{it:tan3} If $X,Y,Z \in \tangle$, then $X\cup Y \cup Z \neq E(M)$;
		\item\label{it:tan4} For each $e \in E(M)$, $E(M)\smin e \not\in\tangle$.
	\end{enumerate}
\end{definition}
For instance, the empty set is a tangle of order $0$ of any nonempty matroid. The collection of all subsets of rank at most 2 is a tangle of order 3 of $\mathrm{PG}(2,q)$ for $q > 2$. For $q = 2$, condition $\eqref{it:tan3}$ is not satisfied. One can check that the maximum order of a tangle of a wheel or whirl is 2.

The following theorem, which was implicit in \citet{RSX}, shows that tangles and branch width are closely related. A proof using the definition of tangle given above can be found in \citet{GGRW06}. Note that they stated and proved the result for arbitrary connectivity functions.

\begin{theorem}\label{thm:bwtangle}
	Let $M$ be a matroid. The branch width of $M$ is one more than the maximum order $\theta$ of a tangle of $M$.
\end{theorem}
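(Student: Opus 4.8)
The plan is to prove the two inequalities $\bw(M)\ge\theta+1$ and $\bw(M)\le\theta+1$, where $\theta$ denotes the maximum order of a tangle of $M$ (the degenerate cases $|E(M)|\le 1$ depend only on conventions and are checked directly). Recall that a branch decomposition of $M$ is a tree $T$ whose internal vertices all have degree $3$ and whose leaves are labelled bijectively by $E(M)$; deleting an edge $f$ of $T$ partitions $E(M)$ into a set $X_f$ and its complement, the \emph{width} of $f$ is $\lambda_M(X_f)+1$, and $\bw(M)$ is the least, over all branch decompositions, of the largest width of an edge. Since $\lambda_M$ is integer-valued and symmetric under complementation, $\bw(M)\le\theta$ exactly when $M$ has a branch decomposition --- call it \emph{$\theta$-narrow} --- in which $\lambda_M(X_f)<\theta$ (equivalently $\lambda_M(X_f)<\theta$ on both sides) for every edge $f$. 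It therefore suffices to prove the dichotomy that $M$ has a tangle of order $\theta$ if and only if $M$ has no $\theta$-narrow branch decomposition; the theorem follows at once, since by this dichotomy the set of orders $\theta$ for which $M$ has a tangle of order $\theta$ equals $\{\theta\in\N:\bw(M)>\theta\}$, whose maximum is $\bw(M)-1$.

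For the forward implication, let $\tangle$ be a tangle of order $\theta$ and suppose, for a contradiction, that $(T,\varphi)$ is a $\theta$-narrow branch decomposition. For each edge $f$ we have $\lambda_M(X_f)<\theta$, so by Definition~\ref{def:tangle}\eqref{it:tan2} at least one of $X_f$ and $E(M)\smin X_f$ lies in $\tangle$, while by~\eqref{it:tan3} (applied to $X_f$, $X_f$, and $E(M)\smin X_f$) not both do. Orient each edge $f$ of $T$ toward the endpoint on the side whose leaf-label set is \emph{not} in $\tangle$. By~\eqref{it:tan4}, for a pendant edge it is the one-element side that lies in $\tangle$, so every leaf of $T$ is a source of this orientation; since any orientation of a finite tree is acyclic it has a sink, and by the previous remark the sink is an internal vertex $v$. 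The three subtrees hanging from $v$ have leaf-label sets $A_1,A_2,A_3$, which partition $E(M)$, and the edge $f_i$ joining the $i$th subtree to $v$ being oriented toward $v$ means precisely that $E(M)\smin A_i\notin\tangle$, whence $A_i\in\tangle$ by~\eqref{it:tan2} (applicable since $\lambda_M(A_i)=\lambda_M(X_{f_i})<\theta$). But then $A_1\cup A_2\cup A_3=E(M)$ contradicts~\eqref{it:tan3}. So no $\theta$-narrow branch decomposition exists, and taking $\theta$ to be the maximum order of a tangle of $M$ yields $\bw(M)\ge\theta+1$.

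For the reverse implication --- the substantive direction --- I would induct on $|E(M)|$, proving the stronger ``either/or'' statement: for every $Z\subseteq E(M)$ with $\lambda_M(Z)<\theta$, either $Z$ carries a branch decomposition each of whose parts $X_e\subseteq Z$ satisfies $\lambda_M(X_e)<\theta$ (edge widths throughout being measured by the ambient function $\lambda_M$), or $Z$ supports a tangle of order $\theta$ (a family of subsets of $Z$ satisfying the evident analogues of Definition~\ref{def:tangle}\eqref{it:tan1}--\eqref{it:tan4}). Applying this to $Z=E(M)$: if $M$ has no tangle of order $\theta$ it has a $\theta$-narrow branch decomposition, and taking $\theta$ one more than the maximum order of a tangle of $M$ gives $\bw(M)\le\theta+1$. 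The induction step considers all bipartitions $(Z_1,Z_2)$ of $Z$ into nonempty parts with $\lambda_M(Z_1)<\theta$: if for one of them both $Z_1$ and $Z_2$ are decomposable, one joins their decompositions by an edge of width $\lambda_M(Z_1)+1\le\theta$ and is done; otherwise every such bipartition has a part that (by induction) supports a tangle of order $\theta$, and from the totality of these obstructions one must assemble a single tangle of order $\theta$ on $Z$.

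I expect the main obstacle to lie in that last assembly --- concretely, in verifying axiom~\eqref{it:tan3} for the candidate tangle on $Z$, that is, in showing that one cannot cover $Z$ by three subsets of connectivity less than $\theta$ drawn from that family. This is exactly where submodularity of the connectivity function (Lemma~\ref{lem:connprop}\eqref{it:con5}), together with its symmetry~\eqref{it:con2}, enters: given three such sets covering $Z$, one uncrosses two of them to produce, after all, an admissible bipartition of $Z$ both of whose sides are decomposable, contradicting the failure hypothesis. Arranging this uncrossing so that it dovetails with the bookkeeping of the induction --- ensuring the sets one recurses on are genuinely smaller and still free of tangles of order $\theta$ --- is, I expect, the most delicate part of the argument; axioms~\eqref{it:tan1}, \eqref{it:tan2}, and~\eqref{it:tan4} for the candidate tangle should by contrast be essentially immediate from the construction.
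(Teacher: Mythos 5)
The paper does not itself prove Theorem~\ref{thm:bwtangle}: it cites \citet{GGRW06} (and, implicitly, \citet{RSX}), where the result is proved for arbitrary connectivity functions. So there is no in-paper argument to compare against; your proposal has to be judged on its own.

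Your easy direction --- a tangle of order $\theta$ obstructs every $\theta$-narrow branch decomposition --- is complete and correct. Orienting each tree edge toward the side whose label set is \emph{not} in $\tangle$ is well defined by axioms~\eqref{it:tan2} and~\eqref{it:tan3}, every leaf becomes a source by~\eqref{it:tan4}, the resulting acyclic orientation of a tree has an internal sink $v$, and the three label sets $A_1,A_2,A_3$ of the subtrees at $v$ all lie in $\tangle$ and cover $E(M)$, contradicting~\eqref{it:tan3}. This is the standard argument and it works.

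The hard direction is where the gap is, and it is genuine rather than cosmetic: you stop exactly where the theorem's content begins. You set up the dichotomy ``$Z$ is decomposable or $Z$ carries a tangle of order $\theta$'' and then, in the case that no admissible bipartition $(Z_1,Z_2)$ has both sides decomposable, say that ``from the totality of these obstructions one must assemble a single tangle of order $\theta$ on $Z$,'' acknowledging that the uncrossing needed to make this work is ``the most delicate part.'' Two things are missing. First, you never name the candidate tangle on $Z$; the route you sketch (take, for each obstructing bipartition, the tangle that the undecomposable side carries, and somehow merge these) is not the standard one, and tangles on different ground sets $Z_i$ do not obviously glue into a tangle on $Z$ --- I do not see a mechanism, and I am not aware of one in the literature. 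The proof in \citet{GGRW06} (following \citet{RSX}) instead takes as candidate tangle the single family of \emph{$\theta$-branched} small subsets of the ground set, with no recursion through sub-tangles. Second, once a concrete candidate tangle is chosen, it is axiom~\eqref{it:tan2} as much as~\eqref{it:tan3} that needs real work: one must show, via submodularity and a maximality argument, that for every $X$ with $\lambda_M(X)<\theta$ at least one of $X$ and $E(M)\smin X$ is branched. Your write-up flags only~\eqref{it:tan3}. Until the candidate tangle is specified and those verifications are carried out, $\bw(M)\le\theta+1$ is unproved; the cleanest path to a complete argument is to follow the connectivity-function version in \citet{GGRW06} directly.
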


Because of this result, there is no need to define branch width here. We continue with some basic tangle facts, which can easily be deduced from the definition:

\begin{lemma}\label{lem:tangleprops}
	Let $M$ be a matroid, and $\tangle$ a tangle of $M$ of order $\theta$. 
	\begin{enumerate}
		\item\label{it:tanprop1} If $X \in \tangle$ and $X'\subseteq X$ is such that $\lambda_M(X') < \theta$, then $X' \in \tangle$;
		\item\label{it:tanprop2} If $\theta' < \theta$, and $\tangle' = \{X \in \tangle : \lambda_M(X) < \theta'\}$, then $\tangle'$ is a tangle of $M$ of order $\theta'$;
  	\item\label{it:tanprop3} $\tangle$ is a tangle of order $\theta$ of $M^*$.
	\end{enumerate}
\end{lemma}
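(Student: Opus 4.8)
The statement to prove is Lemma \ref{lem:tangleprops}, which collects three basic closure-type properties of tangles. Each part follows directly from the axioms in Definition \ref{def:tangle}, so the plan is simply to verify each in turn, using the submodularity of $\lambda_M$ (Lemma \ref{lem:connprop}\eqref{it:con5}) and the duality $\lambda_{M^*} = \lambda_M$ (Lemma \ref{lem:connprop}\eqref{it:con3}) where needed. There is no single hard step; the only place requiring a moment's thought is part \eqref{it:tanprop1}, where one must rule out that $X'$ lies on the ``wrong side'' of the tangle.

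For part \eqref{it:tanprop1}: suppose $X \in \tangle$ and $X' \subseteq X$ with $\lambda_M(X') < \theta$. By axiom \eqref{it:tan2}, either $X' \in \tangle$ or $E(M) \smin X' \in \tangle$. Suppose for contradiction the latter holds. Then $X$, $E(M)\smin X'$, and $E(M)\smin X'$ are all in $\tangle$, and since $X' \subseteq X$ we have $X \cup (E(M)\smin X') \cup (E(M)\smin X') = X \cup (E(M)\smin X') = E(M)$ (the last equality because $X' \subseteq X$). This contradicts axiom \eqref{it:tan3}. Hence $X' \in \tangle$, as required. (One should also note that $E(M)\smin X' \neq E(M)$, i.e.\ $X'$ is nonempty, which is automatic: if $X' = \emptyset$ then $E(M)\smin X' = E(M) \in \tangle$ would already contradict \eqref{it:tan3} applied with $X=Y=Z=E(M)$, or one can simply observe $\emptyset \in \tangle$ trivially by \eqref{it:tan2} since $E(M) \notin \tangle$ by the same argument.)

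For part \eqref{it:tanprop2}: let $\theta' < \theta$ and $\tangle' = \{X \in \tangle : \lambda_M(X) < \theta'\}$. Axiom \eqref{it:tan1} for $\tangle'$ is immediate from the definition of $\tangle'$. For axiom \eqref{it:tan2}: if $\lambda_M(X) < \theta'$, then also $\lambda_M(X) < \theta$, so $X \in \tangle$ or $E(M)\smin X \in \tangle$ by \eqref{it:tan2} for $\tangle$; since $\lambda_M(X) = \lambda_M(E(M)\smin X) < \theta'$ by Lemma \ref{lem:connprop}\eqref{it:con2}, whichever of the two is in $\tangle$ is also in $\tangle'$. Axioms \eqref{it:tan3} and \eqref{it:tan4} for $\tangle'$ hold because $\tangle' \subseteq \tangle$ and these axioms are inherited by any subcollection (for \eqref{it:tan4}, note $\lambda_M(E(M)\smin e) = \lambda_M(\{e\}) \leq 1 < \theta'$ provided $\theta' \geq 1$; if $\theta' = 0$ then $\tangle' = \emptyset$ and all axioms hold vacuously). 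Finally, one checks the order of $\tangle'$ is exactly $\theta'$ rather than something smaller — but the definition of ``tangle of order $\theta'$'' only requires the four axioms with bound $\theta'$, so nothing further is needed.

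For part \eqref{it:tanprop3}: by Lemma \ref{lem:connprop}\eqref{it:con3}, $\lambda_{M^*}(X) = \lambda_M(X)$ for every $X \subseteq E(M) = E(M^*)$. Therefore each of the four axioms in Definition \ref{def:tangle}, which refer to $\lambda$ only through the inequalities $\lambda(X) < \theta$ and through set-theoretic conditions on $E(M) = E(M^*)$, holds for $\tangle$ with respect to $M^*$ exactly when it holds with respect to $M$. Hence $\tangle$ is a tangle of order $\theta$ of $M^*$. This completes the proof of all three parts.
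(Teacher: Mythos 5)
Your proof is correct, and since the paper explicitly omits the proof (``which can easily be deduced from the definition''), there is no paper argument to compare against; your direct verification of each axiom is exactly the intended routine check. The key step in part~\eqref{it:tanprop1} --- using $X' \subseteq X$ to get $X \cup (E(M)\smin X') = E(M)$ and invoke axiom~\eqref{it:tan3} --- is the right argument, and parts~\eqref{it:tanprop2} and~\eqref{it:tanprop3} are handled correctly. One small remark: your discussion of axiom~\eqref{it:tan4} in part~\eqref{it:tanprop2} is more elaborate than needed; since axiom~\eqref{it:tan4} is a negative condition ($E(M)\smin e \notin \tangle$) and $\tangle' \subseteq \tangle$, it is inherited immediately without any case analysis on $\theta'$, and similarly the parenthetical about $X'$ possibly being empty in part~\eqref{it:tanprop1} is unnecessary because your union argument already covers $X' = \emptyset$.
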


Tangles can be helpful in dealing with crossing separations.

\begin{lemma}\label{lem:crosstangle}
	Let $M$ be a matroid, $\tangle$ a tangle of order $\theta$, and $X,Y\in \tangle$. If $\lambda_M(X\cup Y) < \theta$, then $X\cup Y \in \tangle$.
\end{lemma}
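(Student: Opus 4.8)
The plan is to prove Lemma~\ref{lem:crosstangle} directly from the tangle axioms, using the standard ``three-sets covering'' trick together with axiom~\eqref{it:tan2} to force membership of $X\cup Y$.

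First I would argue by contradiction: suppose $\lambda_M(X\cup Y)<\theta$ but $X\cup Y\notin\tangle$. Then, since $\lambda_M(E(M)\smin(X\cup Y))=\lambda_M(X\cup Y)<\theta$ by Lemma~\ref{lem:connprop}\eqref{it:con2}, axiom~\eqref{it:tan2} applied to the set $X\cup Y$ forces $E(M)\smin(X\cup Y)\in\tangle$. Now I have three members of $\tangle$, namely $X$, $Y$, and $Z:=E(M)\smin(X\cup Y)$, and these plainly satisfy $X\cup Y\cup Z=E(M)$. That directly contradicts axiom~\eqref{it:tan3}, which asserts no three members of $\tangle$ can cover the ground set. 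Hence $X\cup Y\in\tangle$.

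The only thing to be careful about is the degenerate possibility that $Z=E(M)\smin(X\cup Y)$ is empty, i.e.\ that $X\cup Y=E(M)$ already; in that case axiom~\eqref{it:tan2} does not apply to conclude anything new. But if $X\cup Y=E(M)$ then, taking $Z:=\emptyset$, which has $\lambda_M(\emptyset)=0<\theta$ and lies in $\tangle$ by Lemma~\ref{lem:tangleprops}\eqref{it:tanprop1} applied to any member of $\tangle$ (or one can simply note $\emptyset\subseteq X\in\tangle$), we again get $X\cup Y\cup Z=E(M)$ with all three in $\tangle$, contradicting~\eqref{it:tan3}. Alternatively one observes that $\theta\ge 1$ is forced once $\tangle$ is nonempty, and handles the trivial $\tangle=\emptyset$ case separately. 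So there is no real obstacle here; the ``hard part'' is merely making sure the covering triple is a legitimate triple of tangle members, which the above case analysis settles. This is essentially a one-line application of axioms~\eqref{it:tan2} and~\eqref{it:tan3}, and I would present it as such.
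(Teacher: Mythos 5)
Your argument is exactly the paper's: apply axiom (\textit{ii}) to $X\cup Y$ (legal since $\lambda_M(X\cup Y)<\theta$) to conclude that either $X\cup Y\in\tangle$ or its complement $Z$ is, and rule out the latter because $X\cup Y\cup Z=E(M)$ would violate axiom (\textit{iii}). Your worry about the ``degenerate'' case $X\cup Y=E(M)$ is unnecessary --- axiom (\textit{ii}) still applies (giving $E(M)\in\tangle$ or $\emptyset\in\tangle$, both of which you correctly dispatch) --- but this extra care does no harm, and the proof is correct and coincides with the paper's.
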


\begin{proof}
	Let $Z := E(M)\smin (X\cup Y)$. Either $X\cup Y \in \tangle$ or $Z \in \tangle$, by \ref{def:tangle}\eqref{it:tan2}. But if $Z \in \tangle$, then $X\cup Y \cup Z = E(M)$, contradicting \ref{def:tangle}\eqref{it:tan3}.
\end{proof}

We will apply this lemma regularly. In the case $Y = \{e\}$ we may do so without referring to it.

A useful means for studying tangles is the \emph{tangle matroid}. The following result is from \citet{GGRW06}:
\begin{theorem}\label{thm:tanglematroid}
	Let $M$ be a matroid, and $\tangle$ a tangle of $M$ of order $\theta$. Let $\rho:2^{E(M)}\rightarrow \N$ be defined by
	\begin{align*}
		\rho(X) := \left\{\begin{array}{ll}
			\min \{ \lambda_M(Y) : X \subseteq Y \in \tangle \} & \textrm{ if there is a } Y \text{ with } X \subseteq Y \in \tangle\\   
			\theta & \textrm{ otherwise}.
		\end{array}\right.
	\end{align*}
	Then $\rho$ is the rank function of a matroid.
\end{theorem}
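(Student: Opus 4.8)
The plan is to verify that $\rho$ obeys the standard rank axioms on $E(M)$, namely $0 \le \rho(X) \le |X|$, monotonicity under inclusion, and submodularity, which together characterize matroid rank functions.

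First I would dispose of the degenerate case $\theta = 0$, where $\tangle = \emptyset$ and $\rho \equiv 0$, the rank function of the all-loops matroid. So assume $\theta \ge 1$; one checks this forces $E(M) \ne \emptyset$. Tangle axiom \ref{def:tangle}\eqref{it:tan2} applied to $\emptyset$, together with \eqref{it:tan3} (which forbids $E(M) \in \tangle$), gives $\emptyset \in \tangle$, so $\rho(\emptyset) \le \lambda_M(\emptyset) = 0$; and when $\theta \ge 2$, axiom \eqref{it:tan2} applied to $\{e\}$ together with \eqref{it:tan4} gives $\{e\} \in \tangle$, so $\rho(\{e\}) \le \lambda_M(\{e\}) \le \rank_M(\{e\}) \le 1$ (for $\theta = 1$ this is trivial since $\rho \le \theta = 1$). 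Monotonicity is immediate: if $X \subseteq Y$ and $\rho(Y) < \theta$, then a set $B \in \tangle$ with $Y \subseteq B$ attaining $\lambda_M(B) = \rho(Y)$ also contains $X$, whence $\rho(X) \le \rho(Y)$; and if $\rho(Y) = \theta$ there is nothing to prove.

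The core of the proof is submodularity, $\rho(X) + \rho(Y) \ge \rho(X \cup Y) + \rho(X \cap Y)$. When $\rho(X) = \theta$ or $\rho(Y) = \theta$ this follows at once from $\rho(X \cup Y) \le \theta$ and monotonicity, so assume $\rho(X), \rho(Y) < \theta$ and pick $A, B \in \tangle$ with $X \subseteq A$, $Y \subseteq B$, $\lambda_M(A) = \rho(X)$, and $\lambda_M(B) = \rho(Y)$. Submodularity of the connectivity function (Lemma \ref{lem:connprop}\eqref{it:con5}) gives $\lambda_M(A \cup B) + \lambda_M(A \cap B) \le \rho(X) + \rho(Y) \le 2\theta - 2$. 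Whenever $\lambda_M(A \cap B) < \theta$, Lemma \ref{lem:tangleprops}\eqref{it:tanprop1} (using $A \cap B \subseteq A \in \tangle$) gives $A \cap B \in \tangle$, hence $\rho(X \cap Y) \le \lambda_M(A \cap B)$; whenever $\lambda_M(A \cup B) < \theta$, Lemma \ref{lem:crosstangle} gives $A \cup B \in \tangle$, hence $\rho(X \cup Y) \le \lambda_M(A \cup B)$. Since the two quantities sum to less than $2\theta$, at least one of them is below $\theta$. If both are, adding the two displayed bounds finishes it. If only $\lambda_M(A \cap B) < \theta$, then $\lambda_M(A \cap B) \le \rho(X) + \rho(Y) - \lambda_M(A \cup B) \le \rho(X) + \rho(Y) - \theta$, so $\rho(X \cap Y) \le \rho(X) + \rho(Y) - \theta$, and combining with $\rho(X \cup Y) \le \theta$ gives the claim; the subcase with the roles of union and intersection swapped is symmetric.

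Finally, $\rho(X) \le |X|$ follows formally: submodularity applied to $X$ and $\{e\}$ for $e \notin X$ yields $\rho(X \cup e) \le \rho(X) + \rho(\{e\}) - \rho(\emptyset) \le \rho(X) + 1$, and induction on $|X|$ from $\rho(\emptyset) = 0$ completes it, while nonnegativity is clear since $\lambda_M \ge 0$ and $\theta \ge 0$. I expect the submodularity step to be the only genuine obstacle: in particular, the care needed to see that truncating $\lambda_M$ at the value $\theta$ (which is what happens when no covering member of $\tangle$ is available) never destroys the inequality, across the handful of cases determined by which of $\lambda_M(A \cap B)$ and $\lambda_M(A \cup B)$ lies below $\theta$. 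Everything else is routine bookkeeping with the tangle axioms and the known properties of $\lambda_M$.
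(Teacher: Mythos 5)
Your proof is correct. Note that the paper does not supply a proof of this theorem; it cites it from \citet{GGRW06}, so there is no in-paper argument to compare against. Your direct verification of the three rank axioms is the standard route, and the only substantive step --- submodularity, handled via submodularity of $\lambda_M$ together with Lemma \ref{lem:crosstangle} and Lemma \ref{lem:tangleprops}\eqref{it:tanprop1} and a case split on which of $\lambda_M(A\cup B)$, $\lambda_M(A\cap B)$ drops below $\theta$ --- is carried out correctly, including the boundary subcase where one of the two is truncated at $\theta$. The preliminary observations ($\emptyset\in\tangle$ via axioms \eqref{it:tan2} and \eqref{it:tan3}, $\{e\}\in\tangle$ via \eqref{it:tan2} and \eqref{it:tan4} when $\theta\ge 2$) and the derivation of $\rho(X)\le |X|$ from unit-increase by induction are all sound.
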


We will denote this matroid by $M(\tangle)$, and write $\rank_\tangle, \closure_\tangle, \dots$ as shorthand for $\rank_{M(\tangle)}, \closure_{M(\tangle)}, \dots$. We will often work with independent sets in the tangle matroid, and we refer to them as $\tangle$-independent for short.

\begin{lemma}\label{lem:tangleindep}
	Let $M$ be a matroid, $\tangle$ a tangle of $M$ of order $\theta$, and $X$ a set that is independent in $M(\tangle)$. Then $X$ is both independent and coindependent in $M$.
\end{lemma}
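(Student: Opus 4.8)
The plan is to prove the formally stronger statement that any set dependent in $M$ is dependent in $M(\tangle)$, and then read off coindependence by dualising. The key preliminary observation is that $M(\tangle)=M^*(\tangle)$: the rank function $\rho$ of Theorem~\ref{thm:tanglematroid} is built purely from $\tangle$ and from the numbers $\lambda_M(Y)$, and by Lemma~\ref{lem:connprop}\eqref{it:con3} we have $\lambda_M(Y)=\lambda_{M^*}(Y)$ for all $Y$, while by Lemma~\ref{lem:tangleprops}\eqref{it:tanprop3} $\tangle$ is a tangle of $M^*$ of order $\theta$; hence the two constructions produce the very same matroid. Granting the claim and applying it both to $M$ and to $M^*$, we get that ``$X$ dependent in $M$'' and ``$X$ dependent in $M^*$'' each imply ``$X$ dependent in $M(\tangle)=M^*(\tangle)$''; taking contrapositives, $X$ independent in $M(\tangle)$ forces $X$ to be independent both in $M$ and in $M^*$, that is, independent and coindependent in $M$.

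Now let $X\subseteq E(M)$ be dependent in $M$; I want $\rho(X)<|X|$. Since $\rho(X)\le\theta$ always, the case $|X|>\theta$ is immediate, so assume $1\le|X|\le\theta$, whence $\theta\ge1$. If $\theta=1$ then $X=\{e\}$ with $e$ a loop of $M$, so $\lambda_M(\{e\})=0<\theta$; by axioms \ref{def:tangle}\eqref{it:tan2} and \ref{def:tangle}\eqref{it:tan4} this forces $\{e\}\in\tangle$, and then $\rho(\{e\})\le\lambda_M(\{e\})=0<1$. So assume $\theta\ge2$. Choose a circuit $C\subseteq X$ and an element $e\in C$, and put $X':=X\setminus e$; then $e\in\closure_M(C\setminus e)\subseteq\closure_M(X')$. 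It suffices to show $\rho(X)=\rho(X')$, for then $\rho(X)=\rho(X')\le|X'|=|X|-1$, so $X$ is dependent in $M(\tangle)$. Monotonicity of $\rho$ gives $\rho(X')\le\rho(X)$ for free, so only the reverse inequality needs proof.

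If $\rho(X')=\theta$, then $\rho(X)\le\theta=\rho(X')$ and we are done. Otherwise the minimum defining $\rho(X')$ is attained, so fix $Y\in\tangle$ with $X'\subseteq Y$ and $\lambda_M(Y)=\rho(X')<\theta$. If $e\in Y$, then $X\subseteq Y$, so $\rho(X)\le\lambda_M(Y)=\rho(X')$; so suppose $e\notin Y$. Since $X'\subseteq Y$ we have $e\in\closure_M(Y)$, hence $\rank_M(Y\cup e)=\rank_M(Y)$, and as $E(M)\setminus(Y\cup e)\subseteq E(M)\setminus Y$ this yields $\lambda_M(Y\cup e)\le\lambda_M(Y)<\theta$. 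By axiom \ref{def:tangle}\eqref{it:tan2}, either $Y\cup e\in\tangle$ or $E(M)\setminus(Y\cup e)\in\tangle$. In the first case $X\subseteq Y\cup e\in\tangle$ gives $\rho(X)\le\lambda_M(Y\cup e)\le\lambda_M(Y)=\rho(X')$, as wanted. The second case is impossible: then $Y$ and $E(M)\setminus(Y\cup e)$ both lie in $\tangle$, their union is $E(M)\setminus e$ (as $e\notin Y$), and $\lambda_M(E(M)\setminus e)=\lambda_M(\{e\})\le\rank_M(\{e\})\le1<\theta$ (the first equality being Lemma~\ref{lem:connprop}\eqref{it:con2}), so Lemma~\ref{lem:crosstangle} would force $E(M)\setminus e\in\tangle$, contradicting axiom \ref{def:tangle}\eqref{it:tan4}.

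I expect the last step to be the real obstacle. Everything up to it is just unwinding the definition of $\rho$ together with the identities and submodularity of $\lambda_M$; but to dismiss the possibility that the ``other side'' $E(M)\setminus(Y\cup e)$ of the relevant separation lies in $\tangle$ one genuinely has to use the tangle structure, and the combination of the uncrossing lemma (Lemma~\ref{lem:crosstangle}) with the fact that the complement of a singleton never belongs to a tangle (axiom \ref{def:tangle}\eqref{it:tan4}) is exactly what is needed. A small secondary nuisance is the handling of the tiny orders $\theta\le1$, treated separately above.
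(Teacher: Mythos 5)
Your proof is correct. It runs through the same two pillars as the paper's proof---Lemma~\ref{lem:crosstangle} together with axiom~\ref{def:tangle}\eqref{it:tan4}, and a final appeal to duality---but the route is genuinely different. The paper never looks at the circuit structure of $M$: from $\lambda_M(X)\le\rank_M(X)<|X|\le\theta$ it shows $X\in\tangle$ outright, building up from the singleton subsets of $X$ by repeated applications of Lemma~\ref{lem:crosstangle}, with the required bound $\lambda_M(\cdot)<\theta$ at each stage supplied by the crude estimate $\lambda_M(Y)\le|Y|$, and then reads off $\rank_\tangle(X)\le\lambda_M(X)<|X|$. You instead pick a circuit $C\subseteq X$ and an element $e\in C$ and prove the local identity $\rho(X)=\rho(X\smin e)$, by taking an optimal witness $Y\in\tangle$ for $\rho(X\smin e)$, observing $e\in\closure_M(Y)$ so that $Y\cup e$ is still a low-order separation, and then using Lemma~\ref{lem:crosstangle} with axiom~\eqref{it:tan4} to rule out the complement landing in $\tangle$. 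The paper's cardinality argument is shorter and absorbs the tiny orders $\theta\le1$ without a special case, whereas you need a separate paragraph for $\theta=1$; on the other hand your argument yields the slightly stronger local statement that a circuit element is a $\tangle$-rank-free addition, which is a clean fact in its own right. Your explicit observation that $M(\tangle)=M^*(\tangle)$ (via $\lambda_M=\lambda_{M^*}$ and Lemma~\ref{lem:tangleprops}\eqref{it:tanprop3}) is a tidier way of cashing in the paper's terse ``the result now follows by duality''; it is the same remark, just spelled out.
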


\begin{proof}
	Suppose $X$ is not independent in $M$. Then $\lambda_M(X) \leq \rank_M(X) < |X|$. Since $|X| \leq \theta$, Definition \ref{def:tangle}\eqref{it:tan2} implies that either $X$ or its complement is in $\tangle$. From repeated application of Lemma \ref{lem:crosstangle}, starting from the singleton subsets of $X$, we conclude that $X \in \tangle$, and therefore $\rank_\tangle(X) \leq \lambda_M(X) < |X|$, a contradiction to the fact that $X$ is $\tangle$-independent. The result now follows by duality.
\end{proof}

%

If $N$ is a minor of $M$, then we can derive a tangle of $N$ from a tangle of $M$, as follows.

\begin{lemma}\label{lem:tangleminor}
	Let $M$ be a matroid, and $N$ a minor of $M$ such that $E(M) \smin E(N) = S$. Let $\tangle$ be a tangle of $M$ of order $\theta$. Define
	\begin{align*}
		\tangle' := \{X\smin S : X \in \tangle, \lambda_{N}(X \smin S) < \theta-|S|\}.
	\end{align*}
	 Then $\tangle'$ is a tangle of $N$ of order $\theta - |S|$.
\end{lemma}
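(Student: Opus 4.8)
The plan is to prove the lemma by induction on $|S|$, verifying the four conditions of Definition~\ref{def:tangle} for $\tangle'$ at order $\theta' := \theta - |S|$. First dispose of two degenerate cases: if $S = \emptyset$ then $N = M$ and $\tangle' = \tangle$ by~\ref{def:tangle}\eqref{it:tan1}; and if $\theta \le |S|$ then $\theta' \le 0$, so $\tangle' = \emptyset$ (as $\lambda_N \ge 0$), which vacuously satisfies the tangle axioms. Hence assume $1 \le |S|$ and $\theta > |S|$, so $\theta \ge 2$. Fix an expression $N = M\contract C\delete D$ with $C\cup D = S$, choose $f \in S$, and let $M_1 := M\delete f$ if $f\in D$ and $M_1 := M\contract f$ if $f \in C$; then $N$ is a minor of $M_1$ with $E(M_1)\smin E(N) = S\smin f$. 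By Lemma~\ref{lem:tangleprops}\eqref{it:tanprop3} and duality, the case $f\in C$ reduces to the case $f\in D$, so we treat $M_1 = M\delete f$. Put $\tangle_1 := \{X\smin f : X\in\tangle,\ \lambda_{M_1}(X\smin f) < \theta - 1\}$, which is the lemma's formula in the case $|S| = 1$. Granting that $\tangle_1$ is a tangle of $M_1$ of order $\theta - 1$, the inductive hypothesis applied to $(M_1, N, \tangle_1)$ yields a tangle $\tangle''$ of $N$ of order $(\theta-1) - (|S|-1) = \theta'$; unwinding the definitions gives $\tangle'' = \{X\smin S : X\in\tangle,\ \lambda_{M_1}(X\smin f) < \theta-1,\ \lambda_N(X\smin S) < \theta'\}$, and since $\lambda_{M_1}(X\smin f) \le \lambda_N(X\smin S) + (|S|-1)$ by iterating Lemma~\ref{lem:connprop} (passing from $N$ back up to $M_1$), the middle condition is implied by the last; thus $\tangle'' = \tangle'$, and everything reduces to the one-element case.

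So suppose $|S| = 1$, say $S = \{f\}$ and $M_1 = M\delete f$; we check the four conditions for $\tangle_1$ at order $\theta - 1$. Condition~\eqref{it:tan1} is immediate from the definition of $\tangle_1$. For~\eqref{it:tan2}, if $\lambda_{M_1}(A) < \theta-1$ then $\lambda_M(A)\le\lambda_{M_1}(A)+1 < \theta$ by Lemma~\ref{lem:connprop}\eqref{it:con4}, so $A\in\tangle$ or $E(M)\smin A\in\tangle$ by~\ref{def:tangle}\eqref{it:tan2}; deleting $f$ puts $A$, resp.\ $E(M_1)\smin A$, into $\tangle_1$. The key fact for the remaining conditions is that $\{f\}\in\tangle$: indeed $\lambda_M(\{f\})\le 1 < \theta$, so~\ref{def:tangle}\eqref{it:tan2} gives $\{f\}\in\tangle$ or $E(M)\smin f\in\tangle$, and the latter is forbidden by~\ref{def:tangle}\eqref{it:tan4}. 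For~\eqref{it:tan3}, let $A_1,A_2,A_3\in\tangle_1$ with $A_1\cup A_2\cup A_3 = E(M_1)$. Each $A_i$ has a witness $X_i\in\tangle$ with $X_i\smin f = A_i$, so $X_i\in\{A_i,\,A_i\cup f\}$; also $\lambda_M(A_i)\le\lambda_{M_1}(A_i)+1 < \theta$, so $A_i\in\tangle$ or $E(M)\smin A_i\in\tangle$. If $A_i\notin\tangle$, then $X_i = A_i\cup f\in\tangle$ and $E(M)\smin A_i\in\tangle$, but $(A_i\cup f)\cup(E(M)\smin A_i) = E(M)$ violates~\ref{def:tangle}\eqref{it:tan3}; hence $A_i\in\tangle$ for every $i$. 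Moreover $\lambda_M(A_i\cup f) = \lambda_M(E(M_1)\smin A_i)\le\lambda_{M_1}(E(M_1)\smin A_i)+1 = \lambda_{M_1}(A_i)+1 < \theta$ by Lemma~\ref{lem:connprop}\eqref{it:con2},\eqref{it:con4}, so $A_i\cup f\in\tangle$ by Lemma~\ref{lem:crosstangle} applied to $A_i,\{f\}\in\tangle$; then $A_1\cup A_2\cup(A_3\cup f) = E(M)$ again violates~\ref{def:tangle}\eqref{it:tan3}. Finally, for~\eqref{it:tan4}, if $E(M_1)\smin e\in\tangle_1$ its witness $X$ equals $E(M)\smin\{e,f\}$ or $E(M)\smin e$; the second is forbidden by~\ref{def:tangle}\eqref{it:tan4}, and for the first, $\lambda_M(X\cup f) = \lambda_M(E(M)\smin e) = \lambda_M(\{e\})\le 1 < \theta$, so $X\cup f = E(M)\smin e\in\tangle$ by Lemma~\ref{lem:crosstangle}, contradicting~\ref{def:tangle}\eqref{it:tan4}.

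The step I expect to be the main obstacle is condition~\eqref{it:tan3} (and its twin~\eqref{it:tan4}): a witness $X\in\tangle$ with $X\smin S = A$ need not meet all of $S$, so three such witnesses need not cover $E(M)$, and one cannot apply~\ref{def:tangle}\eqref{it:tan3} for $\tangle$ off the shelf. Handling $S$ one element at a time is precisely what defuses this — then only a single $f$ must be absorbed, $\{f\}$ is available in the ambient tangle, and since passing to $M\delete f$ changes every relevant connectivity by at most one, Lemma~\ref{lem:crosstangle} can do the absorbing.
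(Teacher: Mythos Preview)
Your proof is correct and follows the same strategy as the paper: reduce by induction to the single-element case $S=\{f\}$ and then verify the four tangle axioms, using that $\{f\}\in\tangle$ to absorb the missing element via Lemma~\ref{lem:crosstangle}. One small point worth tightening: in the inductive step the bound $\lambda_{M_1}(X\smin f)\le\lambda_N(X\smin S)+(|S|-1)$ is true but needs Lemma~\ref{lem:connprop}\eqref{it:con2} as well as \eqref{it:con4} (since $X\smin f$ may meet $S\smin f$, so the sets on the two sides differ); alternatively, just replace the witness $X$ by $A:=X\smin S$, which lies in $\tangle$ because $\lambda_M(A)<\theta$ and $X\cup(E(M)\smin A)\cup\emptyset=E(M)$ would otherwise violate~\ref{def:tangle}\eqref{it:tan3}.
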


\begin{proof}
  We give the proof if $S = \{e\}$. The result then follows by induction. The result is trivial if $\theta \leq 1$, since that implies $\tangle' = \emptyset$. Hence we may assume $\theta \geq 2$. 
	
	 Note that \ref{def:tangle}\eqref{it:tan1} follows immediately from our definition. For \ref{def:tangle}\eqref{it:tan2}, if $(X,Y)$ is $k$-separating in $N$ with $k \leq \theta - 1$, then Lemma \ref{lem:connprop}\eqref{it:con4} implies that $(X\cup e, Y)$ is $(k+1)$-separating in $M$, and hence either $X\cup e \in \tangle$ or $Y\in\tangle$. Then it follows immediately that $X \in \tangle'$ or $Y \in \tangle'$ respectively. For \ref{def:tangle}\eqref{it:tan3}, note that $\lambda_M(X\cup e) \leq \lambda_M(X)+1 < \theta$, so $(X\cup e) \cup Y \cup Z$ does not cover $E(M)$. Hence $X\cup Y \cup Z$ cannot cover $E(N)$. Finally, suppose $E(N)\smin f \in \tangle'$ for some $f \in E(N)$. Then we must have $E(M)\smin\{e,f\}\in\tangle$. But we also have $\{e\},\{f\} \in \tangle$, contradicting \ref{def:tangle}\eqref{it:tan3}.
\end{proof}

We say $\tangle'$ is the tangle \emph{inherited from} $\tangle$. We note some elementary properties of the corresponding tangle matroid:

\begin{lemma}\label{lem:tanglematroidminor}
	Let $M$ be a matroid, $\tangle$ a tangle of $M$ of order $\theta$, and $N$ a minor of $M$ with $E(M)\smin E(N) = \{e\}$. Let $\tangle'$ be the tangle of $N$ inherited from $\tangle$, and let $Z\subseteq E(N)$.
	\begin{enumerate}
		\item\label{it:tanmat1} $\rank_{\tangle}(Z) - 1 \leq \rank_{\tangle'}(Z) \leq \rank_\tangle(Z)$;
		\item\label{it:tanmat2} If $e\not\in \closure_\tangle(Z)$ and $\rank_\tangle(Z) < \theta$, then $\rank_{\tangle'}(Z) = \rank_{\tangle}(Z)$.
	\end{enumerate}
\end{lemma}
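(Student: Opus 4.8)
The plan is to argue directly from the rank formula for tangle matroids (Theorem~\ref{thm:tanglematroid}) together with a clean description of membership in $\tangle'$. Write $\rho := \rank_\tangle$ and $\rho' := \rank_{\tangle'}$. We may assume $\theta \ge 2$, since for $\theta \le 1$ the inherited tangle is empty and both statements are immediate. I would first record three preliminaries. (a) Since $\lambda_M(\{e\}) \le 1 < \theta$, Definition~\ref{def:tangle}\eqref{it:tan2} and \eqref{it:tan4} give $\{e\} \in \tangle$. (b) For every $A \subseteq E(N)$ one has $\lambda_N(A) \le \lambda_M(A) \le \lambda_N(A)+1$ and $\lambda_N(A) \le \lambda_M(A \cup e) \le \lambda_N(A)+1$; the first chain is Lemma~\ref{lem:connprop}\eqref{it:con3}--\eqref{it:con4} (via $\lambda_M = \lambda_{M^*}$ this covers the deletion and contraction cases at once), and the second follows from the first applied to $E(N)\smin A$ together with $\lambda_M(A\cup e) = \lambda_M(E(N)\smin A)$ (Lemma~\ref{lem:connprop}\eqref{it:con2}). (c) Unwinding the definition of $\tangle'$, a set $W \subseteq E(N)$ lies in $\tangle'$ if and only if $\lambda_N(W) < \theta - 1$ and at least one of $W$ and $W \cup e$ lies in $\tangle$.

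For part~\eqref{it:tanmat1} I would prove the two inequalities separately. For $\rho'(Z) \le \rho(Z)$: this is clear if $\rho(Z) \ge \theta-1$, since $\rho'(Z) \le \theta - 1$; otherwise choose $Y \in \tangle$ with $Z \subseteq Y$ and $\lambda_M(Y) = \rho(Z)$, put $W := Y \smin e$, and verify with (b) and (c) that $W \in \tangle'$ and $\lambda_N(W) \le \lambda_M(Y) = \rho(Z)$, treating $e \in Y$ and $e \notin Y$ uniformly by passing to complements when $e \in Y$. For $\rho'(Z) \ge \rho(Z) - 1$: clear if $\rho'(Z) = \theta - 1$; otherwise choose $W \in \tangle'$ with $Z \subseteq W$ and $\lambda_N(W) = \rho'(Z)$, let $Y \in \tangle$ be whichever of $W$, $W\cup e$ lies in $\tangle$ (one does, by (c)), and note $\rho(Z) \le \lambda_M(Y) \le \lambda_N(W) + 1$ by (b).

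For part~\eqref{it:tanmat2}, note that $e \notin \closure_\tangle(Z)$ means exactly $\rho(Z \cup e) = \rho(Z)+1$. Since $\rho'(Z) \le \rho(Z)$ by \eqref{it:tanmat1}, it suffices to rule out $\rho'(Z) \le \rho(Z) - 1$. Assuming this, pick a witness $W \in \tangle'$ with $Z \subseteq W$ and $\lambda_N(W) = \rho'(Z)$. If $W \cup e \in \tangle$ then it contains $Z \cup e$ and $\lambda_M(W\cup e) \le \lambda_N(W) + 1 \le \rho(Z)$ by (b), so $\rho(Z\cup e) \le \rho(Z)$, a contradiction. Otherwise $W \in \tangle$ and $W \cup e \notin \tangle$; since $\lambda_M(E(N)\smin W) = \lambda_M(W\cup e) \le \rho(Z) < \theta$, Definition~\ref{def:tangle}\eqref{it:tan2} forces $E(N)\smin W \in \tangle$, and then $W$, $E(N)\smin W$, $\{e\}$ are three members of $\tangle$ with union $E(M)$, contradicting Definition~\ref{def:tangle}\eqref{it:tan3}.

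The connectivity bookkeeping in (b) and the closure/rank translation in \eqref{it:tanmat2} are routine; the one genuinely load-bearing step is the final appeal to the third tangle axiom, which is precisely what makes the hypothesis $e \notin \closure_\tangle(Z)$ do its work. I expect that step, and keeping the $\theta - 1$ boundary cases straight, to be the only points requiring care.
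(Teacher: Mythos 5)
Your proof is correct, and it takes a genuinely different route from the paper's for part~\eqref{it:tanmat2}. The paper leaves part~\eqref{it:tanmat1} as ``a straightforward consequence of Lemma~\ref{lem:connprop}\eqref{it:con4}'' and, for part~\eqref{it:tanmat2}, first dualizes so that $N = M\contract e$ and then invokes the closure/coclosure characterization of when contracting an element drops a separation (the content of Lemma~\ref{lem:3sepgutscoguts}), concluding $\lambda_M(Z'\cup e) = k+1$ and deriving a contradiction with $e\not\in\closure_\tangle(Z)$. Your argument avoids both the dualization and the guts/coguts machinery: the self-dual inequalities in your step~(b) handle deletion and contraction at once, and your case~$W\cup e\not\in\tangle$ is closed by a direct appeal to the third tangle axiom rather than to rank arithmetic in $M$. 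This is slightly more self-contained --- it stays entirely within the tangle formalism and the basic connectivity inequalities --- and it has the side benefit of making the boundary cases at $\theta-1$ fully explicit, which the paper's terse phrasing elides. The paper's route is shorter on the page but relies on the reader supplying the same underlying observation (that $\lambda_M(Z'\cup e)=k$ would already contradict axiom~(iii)) that you state outright.
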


\begin{proof}
	Part \eqref{it:tanmat1} is a straightforward consequence of  \ref{lem:connprop}\eqref{it:con4}. Suppose Part \eqref{it:tanmat2} is false. Let $Z' \supseteq Z$ be such that $Z' \in\tangle'$ and $k = \lambda_N(Z') < \rank_\tangle(Z)$. By dualizing $M$ and $N$ if necessary we may assume $N = M\contract e$. Since $\lambda_M(Z') > k$, we must have $e\in\closure_M(Z')\cap\closure_M(E(M)\smin (Z'\cup e))$. But then $\lambda_M(Z'\cup e) = k+1$, and therefore $\rank_\tangle(Z\cup e) \leq k+1 \leq \rank_\tangle(Z)$. But this implies $e\in \closure_{\tangle}(Z)$, a contradiction.
\end{proof}

An easy corollary is the following.

\begin{lemma}\label{lem:indeptangleminor}
	Let $M$ be a matroid, let $\tangle$ be a tangle of $M$, let $X$ be a $\tangle$-independent subset of $E(M)$, and let $e\in X$. Then $X\smin e$ is $\tangle'$-independent in $M\delete e$, where $\tangle'$ is the tangle of $M\delete e$ inherited from  $\tangle$.
\end{lemma}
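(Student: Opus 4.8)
The plan is to reduce the statement to Lemma~\ref{lem:tanglematroidminor}\eqref{it:tanmat2}, applied with $Z = X \smin e$ and $N = M\delete e$. First I would record that, since $e \in X$, we have $|X| \geq 1$, so the order $\theta$ of $\tangle$ is at least $1$ and hence $\tangle'$ is a genuine tangle of $M\delete e$ (of order $\theta-1$) by Lemma~\ref{lem:tangleminor}; the goal is then precisely to show $\rank_{\tangle'}(X\smin e) = |X|-1 = |X\smin e|$.

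Next I would check the two hypotheses of Lemma~\ref{lem:tanglematroidminor}\eqref{it:tanmat2} for $Z = X\smin e$. Since $X$ is $\tangle$-independent, every subset of $X$ is $\tangle$-independent, so $\rank_\tangle(X\smin e) = |X|-1$ while $\rank_\tangle(X) = |X|$. Because the tangle matroid $M(\tangle)$ has rank $\theta$ (its rank function equals $\theta$ on $E(M)$, as $E(M)$ is contained in no member of $\tangle$ by Definition~\ref{def:tangle}\eqref{it:tan3}), we get $|X|\leq\theta$ and therefore $\rank_\tangle(X\smin e) = |X|-1 < \theta$. Moreover $\rank_\tangle\bigl((X\smin e)\cup e\bigr) = \rank_\tangle(X) = |X| > |X|-1 = \rank_\tangle(X\smin e)$, so $e\notin\closure_\tangle(X\smin e)$.

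With both hypotheses verified, Lemma~\ref{lem:tanglematroidminor}\eqref{it:tanmat2} yields $\rank_{\tangle'}(X\smin e) = \rank_\tangle(X\smin e) = |X|-1 = |X\smin e|$, i.e.\ $X\smin e$ is $\tangle'$-independent. I do not expect any genuine obstacle: the only points requiring a moment's care are remembering that $M(\tangle)$ has rank exactly $\theta$ (so that $|X|\leq\theta$), and noting that the degenerate cases $\theta\in\{0,1\}$ are either vacuous or trivial since then $X$ can have at most one element.
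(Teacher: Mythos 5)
Your proof is correct, and it takes a genuinely different route from the one in the paper. The paper argues directly by contradiction: it assumes a set $Z\in\tangle'$ with $X\smin e\subseteq Z$ and $\lambda_{M\delete e}(Z)<|X\smin e|$ exists, uses Lemma~\ref{lem:connprop}\eqref{it:con4} to get $\lambda_M(Z\cup e)<|X|$, argues via Definition~\ref{def:tangle}\eqref{it:tan3} that $Z\cup e\in\tangle$, and then contradicts $\tangle$-independence of $X$ since $X\subseteq Z\cup e$. You instead treat Lemma~\ref{lem:tanglematroidminor}\eqref{it:tanmat2} as a black box and verify its two hypotheses for $Z=X\smin e$: that $\rank_\tangle(X\smin e)<\theta$ (via the rank of $M(\tangle)$ being $\theta$) and that $e\notin\closure_\tangle(X\smin e)$ (since $\rank_\tangle(X)=\rank_\tangle(X\smin e)+1$). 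Both proofs are sound. Your approach is arguably closer in spirit to the paper's remark ``An easy corollary is the following,'' and it cleanly isolates the content to a check of hypotheses; the paper's direct argument avoids the observation about the rank of $M(\tangle)$ and works from the definition of the inherited tangle and the tangle axioms, making it more self-contained but slightly less modular.
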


\begin{proof}
	Assume the result is false. Then there is a set $Z \supseteq X\smin e$ with $Z\in\tangle'$ and $\lambda_{M\delete e}(Z) < |X\smin e|$. By definition of $\tangle'$, either $Z\in\tangle$ or $Z\cup e \in \tangle$. By Lemma \ref{lem:connprop}\eqref{it:con4} we have that $\lambda_M(Z\cup e) \leq \lambda_{M\delete e}(Z) + 1 < |X|$. It follows that $Z\cup e \in \tangle$, because otherwise its complement together with $Z$ and $\{e\}$ would cover $E(M)$. But $X \subseteq Z\cup e$, a contradiction to $X$ being $\tangle$-independent.
\end{proof}

\begin{lemma}\label{lem:indepclosuretan}
	Let $M$ be a matroid, let $\tangle$ be a tangle of $M$ of order $\theta$, and let $X\subseteq E(M)$ be $\tangle$-independent. Let $Y := \closure_\tangle(X)$. If $e\in Y\smin X$, then $e \in \closure_M(Y\smin e)$ or $e\in \coclosure_M(Y\smin e)$.
\end{lemma}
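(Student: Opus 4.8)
The plan is to exhibit a subset $S\subseteq Y\smin e$ for which $e\in\closure_M(S)\cup\coclosure_M(S)$ can be checked directly; since $\closure_M$ and $\coclosure_M$ are monotone under inclusion, this immediately gives $e\in\closure_M(Y\smin e)\cup\coclosure_M(Y\smin e)$. The right $S$ comes from making the hypothesis $e\in\closure_\tangle(X)$ concrete. Write $r:=|X|$, so that $r=\rank_\tangle(X)=\rank_\tangle(Y)$ because $X$ is $\tangle$-independent. If $r=\theta$, then $\rank_\tangle$ is already maximal on $X$, so $\closure_\tangle(X)=E(M)$, and the claim is immediate: in any matroid an element lies in the closure or in the coclosure of the complement of itself (it misses both only by being simultaneously a loop and a coloop, which is impossible). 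So from now on assume $r<\theta$.

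Because $e\in\closure_\tangle(X)$ we have $\rank_\tangle(X\cup e)=r<\theta$, so in the rank formula of Theorem~\ref{thm:tanglematroid} the value $r$ is attained as a minimum rather than through the ``otherwise'' clause; hence there is a set $W$ with $X\cup e\subseteq W\in\tangle$ and $\lambda_M(W)=r$. I would take $S:=W\smin e$, and the first thing to check is $W\subseteq Y$, so that indeed $S\subseteq Y\smin e$. Since $W\in\tangle$, Theorem~\ref{thm:tanglematroid} gives $\rank_\tangle(W)\le\lambda_M(W)=r$, while $X\subseteq W$ gives $\rank_\tangle(W)\ge\rank_\tangle(X)=r$; so $\rank_\tangle(W)=r$ and $X$ is a basis of $M(\tangle)|W$, whence $W\subseteq\closure_\tangle(X)=Y$.

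The core step is to prove $e\in\closure_M(W\smin e)\cup\coclosure_M(W\smin e)$. By Lemma~\ref{lem:connprop}\eqref{it:con1}, this can fail only if $\rank_M(W)=\rank_M(W\smin e)+1$ and $\rank_{M^*}(W)=\rank_{M^*}(W\smin e)+1$, that is, only if $\lambda_M(W\smin e)=\lambda_M(W)-1=r-1$; since $\lambda_M(W\smin e)\ge\lambda_M(W)-1$ always holds, it suffices to rule out this single possibility. Suppose $\lambda_M(W\smin e)=r-1$. As $r-1<\theta$, Definition~\ref{def:tangle}\eqref{it:tan2} gives that either $W\smin e\in\tangle$ or $(E(M)\smin W)\cup e\in\tangle$. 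The latter is impossible, for then $W$, $W$, and $(E(M)\smin W)\cup e$ would be members of $\tangle$ (repetition being permitted) whose union is $E(M)$, contradicting Definition~\ref{def:tangle}\eqref{it:tan3}. Hence $W\smin e\in\tangle$, so by Theorem~\ref{thm:tanglematroid} $\rank_\tangle(W\smin e)\le\lambda_M(W\smin e)=r-1$; but $X\subseteq W\smin e$ forces $\rank_\tangle(W\smin e)\ge\rank_\tangle(X)=r$, a contradiction. Therefore $e\in\closure_M(W\smin e)\cup\coclosure_M(W\smin e)\subseteq\closure_M(Y\smin e)\cup\coclosure_M(Y\smin e)$, as required.

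There is no deep obstacle here; the points demanding care are the choice of auxiliary set — it must be the full witness $W$ with $e$ removed, not $X\cup e$ — and keeping straight the three elementary facts used above ($\rank_\tangle(Z)\le\lambda_M(Z)$ for $Z\in\tangle$, monotonicity of $\rank_\tangle$, and $\lvert\lambda_M(W\smin e)-\lambda_M(W)\rvert\le1$). The only boundary case needing separate mention, $r=\theta$, is handled in the first paragraph; when $r=0$ the possibility excluded in the core step is vacuous, so nothing extra is needed there.
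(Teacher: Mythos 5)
Your proof is correct and follows essentially the same route as the paper's: both reduce to showing that if $e$ were in neither the closure nor the coclosure, then $\lambda_M$ would drop by one upon removing $e$, contradicting the rank structure of $M(\tangle)$. The one technical difference is that the paper argues directly with $Y$, asserting $Y\in\tangle$ and $\lambda_M(Y\smin e)\geq t$ ``by the definition of $\rho$'' — facts that each need a short unpacking — whereas you instead pick an explicit witness $W\in\tangle$ with $X\cup e\subseteq W$ and $\lambda_M(W)=r$, rule out $\lambda_M(W\smin e)=r-1$ via the tangle axioms, and then transfer back to $Y$ by monotonicity of $\closure_M$ and $\coclosure_M$; this avoids having to argue separately that the $\tangle$-closure of a set of rank below $\theta$ is itself in $\tangle$, so it is a small but genuine simplification.
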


\begin{proof}
	Assume $\rank_\tangle(X) = t$. If $t = \theta$ then $Y = E(M)$, and the result follows. Otherwise, we have $Y \in \tangle$, and $\lambda_M(Y) = t$, and $\lambda_M(Y\smin e) \geq t$, by the definition of $\rho$ in Theorem \ref{thm:tanglematroid}. Suppose $e$ is in neither the closure nor the coclosure of $Y\smin e$. Then
	\begin{align*}
		\lambda_M(Y) & = \rank_M(Y) + \corank_M(Y) - |Y| \\
		& = \rank_M(Y\smin e) + 1 + \corank_M(Y\smin e) + 1 - (|Y\smin e| + 1)\\
		& = \lambda_M(Y\smin e) + 1 > t,
	\end{align*}
	a contradiction.
\end{proof}

\begin{lemma}\label{lem:skew3seps}
  Let $M$ be a 3-connected matroid, $\tangle$ a tangle of $M$, and $X, X'$ long lines of $M(\tangle)$ such that $\rank_\tangle(X\cup X') = 4$. Let $e\in X$ and $M'\in\{M\delete e, M\contract e\}$ be such that $M'$ is 3-connected. Let $\tangle'$ be the tangle of $M'$ inherited from $\tangle$. Then $X'$ is closed in $M(\tangle')$.
\end{lemma}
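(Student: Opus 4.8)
The goal is to show that, after deleting or contracting $e\in X$ to obtain a $3$-connected $M'$, the long line $X'$ of $M(\tangle)$ remains closed in $M(\tangle')$. First I would record what "closed" means here: $X'$ is closed in $M(\tangle')$ iff for every $f\in E(M')\smin X'$ we have $\rank_{\tangle'}(X'\cup f) > \rank_{\tangle'}(X') = 2$, i.e.\ $\rank_{\tangle'}(X'\cup f) = 3$. So suppose for contradiction that there is an $f\in E(M')\smin X'$ with $f\in\closure_{\tangle'}(X')$, equivalently $\rank_{\tangle'}(X'\cup f)=2$. I want to derive a contradiction with $\rank_\tangle(X\cup X')=4$ and with $X'$ being closed in $M(\tangle)$ (which it is, since a long line is a rank-two flat of the tangle matroid by hypothesis).

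**Transferring the collapse back to $M(\tangle)$.**
The main work is to push the relation $f\in\closure_{\tangle'}(X')$ up to $M(\tangle)$. By Lemma~\ref{lem:tanglematroidminor}\eqref{it:tanmat1}, $\rank_\tangle(X'\cup f)\le \rank_{\tangle'}(X'\cup f)+1 = 3$. If $\rank_\tangle(X'\cup f)=2$ then $f\in\closure_\tangle(X')=X'$ (as $X'$ is a flat of $M(\tangle)$), contradicting $f\notin X'$. Hence $\rank_\tangle(X'\cup f)=3$, and moreover the rank dropped by exactly one when passing to $\tangle'$, so by the contrapositive of Lemma~\ref{lem:tanglematroidminor}\eqref{it:tanmat2} we must have $e\in\closure_\tangle(X'\cup f)$ — that is, $\rank_\tangle(X'\cup f\cup e)=3$ as well. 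Now I bring in $X$: since $e\in X$ and $X$ is a long line with $\rank_\tangle(X)=2$, and $X'\cup X$ has tangle-rank $4$, the flat $\closure_\tangle(X'\cup e)$ has rank $3$ (it contains the rank-$2$ flat $X'$ properly, since $e\notin\closure_\tangle(X')=X'$ as $\rank_\tangle(X\cup X')=4>2$; and it cannot have rank $4$ or it would contain all of $X$, forcing $\rank_\tangle(X\cup X')\le 3$ — wait, that needs care: $\closure_\tangle(X'\cup e)$ of rank $3$ containing $e$; it need not contain $X$). Let me instead argue directly with submodularity in $M(\tangle)$.

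**The submodular contradiction.**
In $M(\tangle)$ I have three rank-three flats/sets in play: $X'\cup e$ (rank $3$, since $X'$ has rank $2$ and $e\notin X'$), $X'\cup f\cup e$ (rank $3$, just shown), and I want to combine with $X\cup X'$ of rank $4$. Note $\rank_\tangle(X\cup X'\cup f) \le \rank_\tangle(X\cup X') = 4$ trivially is the wrong direction; instead use submodularity on $A = X\cup X'$ and $B = X'\cup f\cup e$ (whose intersection contains $X'\cup e$, so $\rank_\tangle(A\cap B)\ge \rank_\tangle(X'\cup e)=3$): we get $4 + 3 = \rank_\tangle(A)+\rank_\tangle(B) \ge \rank_\tangle(A\cup B) + \rank_\tangle(A\cap B) \ge \rank_\tangle(X\cup X'\cup f) + 3$, so $\rank_\tangle(X\cup X'\cup f)\le 4$, giving $f\in\closure_\tangle(X\cup X')$. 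That is consistent, not yet a contradiction, so this grouping is not enough — I will instead need to contract/delete-symmetry and use that $f$ lives in $M'$. The clean contradiction should come from applying Lemma~\ref{lem:tanglematroidminor}\eqref{it:tanmat1}–\eqref{it:tanmat2} to the set $X'\cup f$ versus $X'$ and comparing with the fact that $\rank_{\tangle'}(X)$ must still be large: since $X$ is a long line with $\ge 3$ rank-one flats and $e$ is just one point of it, $\rank_{\tangle'}(X\smin e)=2$ and $X\smin e\subseteq E(M')$; combined with $f\in\closure_{\tangle'}(X')$ and $\rank_{\tangle'}(X\cup X')\ge \rank_\tangle(X\cup X')-1 = 3$, I should be able to locate the contradiction by showing $\rank_{\tangle'}\big((X\smin e)\cup X'\big)=3$ forces, via Lemma~\ref{lem:tanglematroidminor}\eqref{it:tanmat2} run in reverse on a suitable superset, that $e$ lay in too many tangle-flats at once.

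**Where the difficulty is.**
The genuine obstacle is bookkeeping the one-unit rank changes of Lemma~\ref{lem:tanglematroidminor} consistently across the several sets $X'$, $X'\cup f$, $(X\smin e)\cup X'$, and $(X\smin e)\cup X'\cup f$ while keeping everything below the tangle order $\theta$ (note $\rank_\tangle(X\cup X')=4$ forces $\theta\ge 4$, which is needed for these flats to behave like genuine matroid flats rather than hitting the $\theta$-cap). The right proof almost certainly picks up $f$ as above, shows $\rank_\tangle(X\cup X'\cup f)=4$ (so $X\cup X'$ spans $f$ in $M(\tangle)$), then restricts attention to $M'$: one shows $\rank_{\tangle'}(X\cup X')=3$ (rank drop by exactly one, which by Lemma~\ref{lem:tanglematroidminor}\eqref{it:tanmat2} pins down $e\in\closure_\tangle(X\cup X')$, automatic since $e\in X$), and then $f\in\closure_{\tangle'}(X')\subseteq\closure_{\tangle'}(X\cup X')$ is consistent — so the contradiction must instead be extracted on the $X'$ side: $\rank_{\tangle'}(X')=2=\rank_\tangle(X')$ means the rank of $X'$ did \emph{not} drop, so by Lemma~\ref{lem:tanglematroidminor}\eqref{it:tanmat2} applied appropriately $e\notin\closure_\tangle(X')$ (true) but also any $Z\supseteq X'$ achieving $\rank_{\tangle'}(X')$ via the tangle-min has $\lambda_N(Z)=2$ and comes from $Z$ or $Z\cup e\in\tangle$; choosing $Z$ to also contain $f$ with $\lambda_{M'}(Z)=2$ then yields a rank-$2$ tangle set in $M(\tangle)$ (namely $Z$ or $Z\cup e$) containing $X'\cup f$ or $X'\cup f\cup e$ — the first gives $f\in X'$, the second gives $\rank_\tangle(X\cup X')\le 3$ via $e\in X$. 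Either way we contradict a hypothesis. I expect the delicate point to be justifying that this $Z$ can be chosen to simultaneously witness $f\in\closure_{\tangle'}(X')$ and have $\lambda_{M'}(Z)=2$, which is exactly the content of the definition of $\rho$ in Theorem~\ref{thm:tanglematroid} applied in $M'$ together with Lemma~\ref{lem:crosstangle}.
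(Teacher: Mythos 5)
Your first two paragraphs are sound: a failure of closedness gives $f\in E(M')\smin X'$ with $\rank_{\tangle'}(X'\cup f)=2$, and transferring back via Lemma~\ref{lem:tanglematroidminor} correctly yields $\rank_\tangle(X'\cup f)=3$ and (by the contrapositive of part~\eqref{it:tanmat2}) $e\in\closure_\tangle(X'\cup f)$; you also correctly diagnose that submodularity on $X\cup X'$ and $X'\cup f\cup e$ gives no contradiction. The gap is in your final paragraph, where you assert that a witness $Z\in\tangle'$ for $f\in\closure_{\tangle'}(X')$, with $\lambda_{M'}(Z)=2$, lifts to ``a rank-$2$ tangle set in $M(\tangle)$ (namely $Z$ or $Z\cup e$)''. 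That is not so: Lemma~\ref{lem:connprop}\eqref{it:con4} only gives $\lambda_M(Z),\lambda_M(Z\cup e)\le\lambda_{M'}(Z)+1=3$, and the case $\lambda_M(Z)=\lambda_M(Z\cup e)=3$ (i.e.\ $e$ lies in the guts of the exact $3$-separation $(Z,\,E(M')\smin Z)$ of $M'$) genuinely occurs. In that case $Z\cup e$ is only $4$-separating, so $\rank_\tangle(Z\cup e)\le 3$; but $Z\cup e\supseteq X'\cup\{e\}$, which already has tangle-rank $3$, so there is no contradiction yet. Your argument handles the easy sub-cases where $\lambda_M(Z)=2$ or $\lambda_M(Z\cup e)=2$, but those are not the heart of the matter.

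The paper's proof closes the guts case by producing a \emph{second} element of $X$ inside $Z$. After dualizing to assume $M'=M\contract e$, Lemma~\ref{lem:3sepgutscoguts} gives $e\in\closure_M(Z)\cap\closure_M(E(M')\smin Z)$, while Lemma~\ref{lem:3sepcontract}, applied to the $3$-separation $(X,E(M)\smin X)$ with $e\in X$ and $M\contract e$ $3$-connected, gives $e\notin\closure_M(E(M)\smin X)$. Hence both $Z$ and $E(M')\smin Z$ must meet $X$. Choosing $e'\in Z\cap X$, the set $X'\cup\{e,e'\}$ sits inside the $4$-separating set $Z\cup e$, so $\rank_\tangle(X'\cup\{e,e'\})\le 3$; and since $\{e,e'\}$ is $\tangle$-independent (it is a $2$-element subset of a $3$-connected matroid) and lies on the long line $X$, we have $\closure_\tangle(\{e,e'\})=X$. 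Taking closures then forces $\rank_\tangle(X\cup X')\le 3$, contradicting the hypothesis. This ``find $e'\in Z\cap X$'' step, driven by Lemma~\ref{lem:3sepcontract}, is the missing idea in your sketch.
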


\begin{proof}
	Since $\tangle$ is a tangle of $M^*$ we may dualize as necessary and assume $M' = M\contract e$. Suppose there is a $Z \in \tangle'$ with $X' \subsetneq Z$, and $\rank_{\tangle'}(Z) = 2$. Define $Y := E(M)\smin Z$. Then $(Z,Y\cup e)$ and $(Z\cup e, Y)$ are 4-separations of $M$, with $e\in\closure_M(Z)\cap\closure_M(Y)$, by Lemma \ref{lem:3sepgutscoguts}. Since $e\not\in\closure_M(E(M)\smin X)$ by Lemma \ref{lem:3sepcontract}, $Z\cap X$ and $Y\cap X$ are both nonempty. Let $e'\in Z\cap X$. Then $\rank_\tangle(X'\cup\{e,e'\}) \leq 3$, since $Z\cup e$ is 4-separating. But $\closure_{\tangle}(\{e,e'\}) = X$, so $3 \geq \rank_\tangle(X\cup \{e,e'\}) = \rank_\tangle(X\cup X') = 4$, a contradiction.
\end{proof}



%
%

\section{Finding elements to remove} 
\label{sec:finding_an_element_to_remove}
As a first step towards our result we show that, if the branch width is high enough, we can remove a single element and preserve 3-connectivity and $N$ as a minor.

\begin{theorem}\label{thm:contractunprotected}
	Let $M$ be a 3-connected matroid, let $N$ be a minor of $M$ without loops or coloops, let $\tangle$ be a tangle of $M$ of order at least 3,
	let $X$ be a long line in $M(\tangle)$, and let $f\in X$.
	If $(X\smin f)\cap E(N) = \emptyset$, then there
	exists an $e\in X\smin f$ such that either
	$M\delete e$ or $M\contract e$ is $3$-connected with $N$ as a minor.	
\end{theorem}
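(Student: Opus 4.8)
The plan is to reduce to the case of a \emph{minimal} long line using the tangle matroid, then use the fan-handling lemmas (especially Lemma \ref{lem:endoffan}) together with the $2$-separation results from Subsection on $2$-separations. First I would observe that since $X$ is a long line of $M(\tangle)$ of rank $2$, it is a $3$-separating set of $M$ (as $\lambda_M(X)\le\rank_\tangle(X)=2<3$, using Lemma \ref{lem:tangleindep}-type reasoning via the definition of $\rho$), and since $M$ is $3$-connected it is in fact exactly $3$-separating (provided $|E(M)|$ is large, which it is since a tangle of order $\ge 3$ forces many elements). Thus removing most elements of $X$ destroys $3$-connectivity, and the elements of $X$ together with their "neighbours" form a fan-like structure. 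The idea is that the elements $e\in X\smin f$ for which neither $M\delete e$ nor $M\contract e$ is $3$-connected must, by Tutte's Triangle Lemma (Lemma \ref{lem:Tuttriang}) and its dual, all lie inside a single fan of $M$; so if $X\smin f$ is large enough relative to fan length, some $e\in X\smin f$ has one of $M\delete e$, $M\contract e$ $3$-connected.

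Concretely, I would split into two regimes. If $M$ is a wheel or a whirl, then $\bw(M)=2$, contradicting that $\tangle$ has order $\ge 3$; so $M$ is neither. Next, let $F$ be a maximal fan of $M$ containing as many elements of $X\smin f$ as possible. The key structural claim is that every element $e\in X\smin f$ with neither $M\delete e$ nor $M\contract e$ being $3$-connected lies in the interior of $F$ (i.e. in $F\smin\{x_1,x_k\}$): one applies Lemma \ref{lem:fanprops}(iv) in one direction, and in the other direction, if $e$ is such a "bad" element, $3$-connectedness failing on both sides combined with the triangle/triad structure around $e$ (coming from $e$ being on the long line $X$, so $X$ behaves like a triangle in the simplification) forces $e$ to sit in a fan. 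If $|X\smin f|\ge 2$ and the bad elements form an interval of a fan, then either some element of $X\smin f$ is a fan end $x_1$ or $x_k$, or $X\smin f$ meets the fan in at most its interior; in the first case Lemma \ref{lem:endoffan} (using $(X\smin f)\cap E(N)=\emptyset$, so $|E(N)\cap F|$ is controlled) hands us an $e$ with one of $M\delete e,M\contract e$ $3$-connected with $N$ as a minor, and we are done. The remaining case is that a "good" element $e$ (one side $3$-connected) exists but the resulting matroid loses $N$; there I would invoke Lemma \ref{lem:3sepcontract} and the $2$-separation toolkit—when, say, $M\delete e$ is $3$-connected but lacks $N$, then $M\contract e$ has $N$, and the $2$-separation introduced by contracting $e$ (whose small side is essentially $X\smin e$ plus a fan tail, disjoint from $E(N)$ except possibly $f$) is handled by Corollary \ref{cor:2sepminordelcon} or Lemma \ref{lem:2sepminorintersect} to push $N$ back onto a $3$-connected minor obtained by deleting/contracting a nearby element.

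I expect the main obstacle to be the bookkeeping around the single distinguished element $f$ and the interaction between the long line $X$ and fans of $M$. The cleanest version probably proceeds by induction on $|X|$: pick $e\in X\smin f$, apply Bixby's Lemma (Lemma \ref{lem:bixby}) so that one of $M\delete e$, $M\contract e$—say $M'$—has no non-minimal $2$-separations; if $M'$ is $3$-connected and has $N$, done; otherwise the unique $2$-separation (or series/parallel class) of $M'$ is localized near $X$, and one shows that either $N$ survives in $M'$ anyway (then recurse on the long line $X\smin e$ in the tangle of $M'$ inherited via Lemma \ref{lem:tangleminor}, noting $X\smin\{e\}$ is still long or collapses helpfully), or $N$ survives on the other side, where the $2$-separation analysis of Subsection on $2$-separations applies because the small side avoids $E(N)$ up to the single element $f$. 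The delicate point is ensuring the inductive long line stays long (rank $2$ with $\ge 3$ points) in the inherited tangle, which is where Lemma \ref{lem:tanglematroidminor} and the hypothesis that $\tangle$ has order $\ge 3$ are used; if the line degenerates, $X$ had few elements to begin with and a direct fan argument finishes the job.
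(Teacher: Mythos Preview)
Your proposal assembles the right ingredients---Bixby's Lemma, fan ends via Lemma~\ref{lem:endoffan}, and the $2$-separation toolkit---but it is organized around the wrong dichotomy, and this creates a genuine gap. You split on whether an element $e\in X\smin f$ is ``good'' (one of $M\delete e$, $M\contract e$ is $3$-connected) or ``bad''. The paper instead first disposes of the case where $X$ contains a fan of length at least~$4$ (so Lemma~\ref{lem:endoffan} applies directly, since any fan meeting $X$ lies inside $X$ by Lemma~\ref{lem:crosstangle}), and then pivots on whether some $e\in X$ has \emph{both} $M\delete e$ and $M\contract e$ containing $N$. That pivot is what makes Bixby's Lemma bite cleanly: once both sides preserve $N$, whichever side has $3$-connected (co)simplification either finishes the proof or produces a triangle inside $X$, where Tutte's Triangle Lemma hands you a different removable element of $X\smin f$.

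Your gap is in the ``good $e$ loses $N$'' step. You assert that if, say, $M\delete e$ is $3$-connected but lacks $N$, then the $2$-separation of $M\contract e$ has small side ``essentially $X\smin e$ plus a fan tail, disjoint from $E(N)$ except possibly $f$''. This localization is exactly the hard part, and you have not justified it; a priori $M\contract e$ could have $2$-separations whose small side escapes $X$. The paper earns this localization by an intermediate Claim: one first finds $e\in X\smin f$ with $M\contract e$ preserving $N$ \emph{and} $X\smin e$ containing no parallel pairs (hence also no series pairs) in $M\contract e$. Only then can one argue that a $2$-separation $(A,B)$ of $M\contract e$ has $B\subseteq X$ with $|B|\ge 3$, so that Corollary~\ref{cor:2sepminordelcon} or Lemma~\ref{lem:2sepminorintersect} applies and produces an element of $X$ for which both deletion and contraction keep $N$---reducing to the case already handled. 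Your plan never isolates such an $e$, so the $2$-separation toolkit cannot be invoked. The inductive route through inherited tangles that you sketch as an alternative is also not how the paper proceeds; it is avoidable here, and as you yourself note, keeping the line long after passing to a minor is delicate and unnecessary once the argument is organized around the ``both sides keep $N$'' pivot.
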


\begin{proof}
	First observe that, if $F \subseteq X$ is a fan, and $F'$ is a fan properly containing $F$, then $F' \subseteq X$, by Lemma \ref{lem:crosstangle}. If $X$ contains a fan of length 4 or more, then the result follows from Lemma \ref{lem:endoffan}. Therefore we can assume that $X$ contains no fans of length at least 4.
	
	Next, assume that there is an element $e \in X$ such that both $M\delete e$ and $M\contract e$ have $N$ as a minor. By Lemma \ref{lem:bixby}, either $\si(M\contract e)$ or $\co(M\delete e)$ is 3-connected. By duality we may assume the former. If $M\contract e$ is simple, then the result follows, so $e$ is on a triangle $T$. If possible, choose $T$ so that $|T\cap X| \geq 2$. Say $T = \{e,g,h\}$. First, assume $g \in X$. Then $h \in \closure_M(\{e,g\}) \subseteq \closure_M(X)$, so $\{e,g,h\}\subseteq X$. Assume $g \neq f$. Lemma \ref{lem:Tuttriang} implies that at least two of $\{e,g,h\}$ can be deleted keeping 3-connectivity. If $e$ is one of them the result follows. Otherwise, since $\{g,h\}$ is a parallel pair in $M\contract e$, it follows that $M\contract e \delete g$ has $N$ as a minor. But then $M\delete g$ has $N$ as a minor and is 3-connected. Hence we may assume that $g, h \in E(M)\smin X$. But then $e \in \closure_M(E(M)\smin X)$, and $(X - e, E(M)\smin X)$ is a 2-separation in $M\contract e$. Since $\si(M\contract e)$ is 3-connected, it follows that $|X - e| = 2$ or $E(M)\smin X = \{g, h\}$. In the former case, $X$ is a triangle of $M$, contradicting the choice of $T$. In the latter case, since $\{g\}, \{h\} \in \tangle$, we contradict Definition \ref{def:tangle}\eqref{it:tan3}.

Hence we may assume that for all $e\in X\smin f$, exactly one of $M\delete e$ and $M\contract e$ has $N$ as a minor.
	
	\begin{claim}
		There is an element $e \in X\smin f$ such that $M\delete e$ has $N$ as a minor and $X \smin e$ has no series pairs, or $M\contract e$ has $N$ as a minor and $X\smin e$ has no parallel pairs.
	\end{claim}
	\begin{subproof}
		Pick, possibly after dualizing, an element $e\in X\smin f$ such that $M\contract e$ has $N$ as a minor. If $M\contract e$ had no parallel pairs in $X\smin e$, then we would be done, so we can assume that $e$ is in a triangle $\{e,g,h\}\subseteq X$ in $M$. Assume $g\neq f$. Since $\{g,h\}$ is a parallel pair in $M\contract e$, it follows that $M\delete g$ has $N$ as a minor, and because $g$ is in no triad, $X\smin g$ contains no series pairs in $M\delete g$.
	\end{subproof}
	
	Now let $e$ be an element such that $M\contract e$ has $N$ as a minor and $X\smin e$ has no parallel pairs. Clearly $X\smin e$ also does not have series pairs in $M\contract e$. If $M\contract e$ is 3-connected, then the result follows. Otherwise $M\contract e$ has a 2-separation $(A,B)$ with $B\subseteq X$. By Lemma \ref{lem:simplecosimple2sep}, we have $|B| \geq 3$. Then Corollary \ref{cor:2sepminordelcon} with Lemma \ref{lem:2sepclosurecoclosuresmall} (if $f \not\in E(N)$) or Lemma \ref{lem:2sepminorintersect} (if $f \in E(N)$) imply the existence of an element $e' \in X$ such that both $M\contract e'$ and $M\delete e'$ have $N$ as a minor, a case we already dealt with. Duality now completes the proof.
\end{proof}

Next, we find a set of deletions and contractions:

\begin{theorem}\label{thm:delconset}
	Let $s$ be an integer, let $M$ be a 3-connected matroid, let $\tangle$ be a tangle of $M$ of order $\theta \geq 6$, and let $N$ be a minor of $M$ with no loops and coloops. If $\theta \geq 2s + t + 1$, then there are disjoint sets $C,D\subseteq E(M)\smin E(N)$ such that $M\contract C \delete D$ is 3-connected with $N$ as a minor, such that $\rank_\tangle(E(N)\cup C\cup D) = t + |C\cup D|$, and such that $|C\cup D| \geq s$.
\end{theorem}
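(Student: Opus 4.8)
The plan is to induct on $s$, peeling off a single element at each step by means of Theorem~\ref{thm:contractunprotected}. Write $t:=\rank_\tangle(E(N))$; the case $s\le 0$ is trivial (take $C=D=\emptyset$). For the inductive step it is convenient to prove a slightly stronger statement that carries along a flat: for every flat $Z$ of $M(\tangle)$ with $E(N)\subseteq Z$, writing $t':=\rank_\tangle(Z)$, if the order is large enough in terms of $s$ and $t'$ then there are disjoint $C,D\subseteq E(M)\smin Z$ with $M\contract C\delete D$ $3$-connected and containing $N$, with $\rank_\tangle(Z\cup C\cup D)=t'+|C\cup D|$, and with $|C\cup D|\ge s$. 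Taking $Z:=\closure_\tangle(E(N))$ recovers the theorem: then $t'=t$, and since $\closure_\tangle(E(N)\cup C\cup D)\supseteq Z$ the rank identity for $Z$ forces the one for $E(N)$. The forbidden flat $Z$ is what lets the ``skewness'' condition $\rank_\tangle(Z\cup C\cup D)=t'+|C\cup D|$ be maintained through the recursion, rather than only verified at the end.

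For the inductive step ($s\ge 1$) the goal is a single $e\in E(M)\smin Z$ together with $M_1\in\{M\delete e,\ M\contract e\}$ such that $M_1$ is $3$-connected with $N$ as a minor; since $Z$ is closed and $e\notin Z$, automatically $\rank_\tangle(Z\cup e)=t'+1$. Given such an $e$, let $\tangle_1$ be the tangle of $M_1$ inherited from $\tangle$, of order $\theta-1$ by Lemma~\ref{lem:tangleminor}, and put $Z_1:=\closure_{\tangle_1}\!\big(\closure_\tangle(Z\cup e)\cap E(M_1)\big)$. Using Lemma~\ref{lem:tanglematroidminor} together with $e\notin Z=\closure_\tangle(Z)$, one checks that $Z_1$ is a flat of $M_1(\tangle_1)$ containing $E(N)$ with $\rank_{\tangle_1}(Z_1)\in\{t',t'+1\}$, and that the numerical hypothesis for the recursive call on $(s-1,M_1,\tangle_1,Z_1)$ follows from the one assumed for $(s,M,\tangle,Z)$ — this is exactly where the bound $\theta\ge 2s+t+1$ (together with the slack in $\theta\ge 6$) is spent. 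Apply the strengthened hypothesis to obtain $C_1,D_1$, and set $(C,D):=(C_1\cup\{e\},D_1)$ if $M_1=M\contract e$ and $(C,D):=(C_1,D_1\cup\{e\})$ if $M_1=M\delete e$, so that $M\contract C\delete D=M_1\contract C_1\delete D_1$. Then $|C\cup D|\ge s$ is immediate, and the identity $\rank_\tangle(Z\cup C\cup D)=t'+|C\cup D|$ follows from the identity for $Z_1$ and Lemma~\ref{lem:tanglematroidminor}, $Z_1$ having been defined precisely to encode $\closure_\tangle(Z\cup e)$; this last bit requires a somewhat fussy case analysis on whether $\tangle$-ranks drop when passing to $\tangle_1$.

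It remains to produce $e$, and this is the main obstacle. If $M(\tangle)$ has a long line $X$ with $X\not\subseteq Z$ and $X\cap Z$ contained in a single rank-one flat of $M(\tangle)$, choose $f\in X$ to be (a representative of) $X\cap Z$, or any element of $X$ if $X\cap Z=\emptyset$; then $(X\smin f)\cap E(N)=(X\smin f)\cap Z=\emptyset$, so Theorem~\ref{thm:contractunprotected} yields an element $e\in X\smin f$ — necessarily outside $Z$ — with $M\delete e$ or $M\contract e$ $3$-connected with $N$ as a minor, as required. But such a long line need not exist: the tangle matroid of $U_{r,n}$ has no long lines at all, and more generally $M(\tangle)$ can be line-poor away from $Z$ without being uniform, so the argument must split. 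In the remaining case every long line of $M(\tangle)$ lies inside $Z$; since a triangle or triad of the $3$-connected matroid $M$ always spans a long line of $M(\tangle)$, it follows that $M$ has no triangle and no triad meeting $E(M)\smin Z$. Hence for every $e\in E(M)\smin Z$ (and such $e$ exist because $\rank_\tangle(Z)=t'<\theta=\rank(M(\tangle))$) the matroid $M\delete e$ is cosimple with no series pairs and $M\contract e$ is simple with no series pairs, so by Bixby's Lemma~\ref{lem:bixby} one of $M\delete e$, $M\contract e$ is $3$-connected. One then picks $e\notin Z$ so that the $3$-connected side is also the side preserving $N$ — using that every element of $E(M)\smin E(N)$ is deletable or contractible for $N$, and handling the exceptional elements with Lemma~\ref{lem:2sepminorcondel}, Corollary~\ref{cor:2sepminordelcon}, and the fan lemma~\ref{lem:endoffan}, just as in the proof of Theorem~\ref{thm:contractunprotected}. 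Managing this dichotomy, and keeping the two invariants — the order of the inherited tangle and the skewness of $C\cup D$ over the flat — synchronized with the hypothesis on $\theta$, is where essentially all of the work lies; the remaining steps are routine uses of the lemmas of Section~\ref{sec:definitions}.
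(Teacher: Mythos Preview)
Your overall strategy --- build up $C\cup D$ one element at a time, each new element chosen outside the $\tangle$-closure of what has already been collected --- is the paper's approach; the paper packages the single-element step as Lemma~\ref{lem:deleteonetangle} and runs a maximality argument rather than an explicit induction, but the content is the same.

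The genuine gap is in your second case, where every long line of $M(\tangle)$ lies inside $Z$. You correctly deduce that no triangle or triad of $M$ meets $E(M)\smin Z$, and then use Bixby's Lemma to conclude that \emph{one} of $M\delete e$, $M\contract e$ is $3$-connected for each $e\notin Z$. You then propose to choose $e$ so that this $3$-connected side coincides with the side preserving $N$, invoking Lemma~\ref{lem:2sepminorcondel}, Corollary~\ref{cor:2sepminordelcon}, and Lemma~\ref{lem:endoffan} ``just as in the proof of Theorem~\ref{thm:contractunprotected}.'' But none of those tools apply: $M$ is $3$-connected, so there are no $2$-separations to feed to the first two; $e$ lies in no triangle or triad, so there is no fan to feed to the third; and Theorem~\ref{thm:contractunprotected} works entirely inside a long line, which is exactly what you do not have here. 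Nothing in your argument forces the two sides to match.

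The fix --- and this is what the paper does --- is to observe that in this case \emph{both} $M\delete e$ and $M\contract e$ are $3$-connected. If $e$ lies on no long line of $M(\tangle)$, then for every exact $3$-separation $(A,B)$ of $M$ with $e\in A$ one must have $B\in\tangle$ (else $A\in\tangle$ would be a long line through $e$), and moreover $e\notin\closure_M(B)\cup\coclosure_M(B)$ (else $B\cup e\in\tangle$ would be a long line through $e$). Lemma~\ref{lem:3sepgutscoguts} and its dual then show that neither $M\contract e$ nor $M\delete e$ inherits a $2$-separation from $(A,B)$; since this holds for every $3$-separation, both minors are $3$-connected, and whichever has $N$ as a minor works. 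With this observation your inductive scheme goes through, though the paper's version --- pass to $M'=M\contract C\delete D$, work with $H'=\closure_{\tangle'}(E(N))$ there, and argue that $e\notin H'$ forces $e\notin\closure_\tangle(E(N)\cup C\cup D)$ --- avoids the flat-tracking and the ``fussy case analysis'' you flag as unresolved.
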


To achieve this we use the following lemma:

\begin{lemma}\label{lem:deleteonetangle}
	Let $M$ be a 3-connected matroid, let $\tangle$ be a tangle of $M$ of order $\theta \geq 3$, let $N$ be a minor of $M$ with no loops and coloops, and let $H$ be a closed set of $M(\tangle)$ containing $E(N)$. If $\theta > \rank_\tangle(H)$, then there is an element $e\in E(M)\smin H$ such that one of $M\delete e$ and $M\contract e$ is 3-connected with $N$ as a minor.
\end{lemma}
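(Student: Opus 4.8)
The plan is to reduce to Theorem~\ref{thm:contractunprotected} whenever $M(\tangle)$ has a long line meeting $E(M)\smin H$ in more than one point, and to argue directly otherwise, mirroring the structure of the proof of Theorem~\ref{thm:contractunprotected} with $E(M)\smin H$ playing the role of the ``free zone'' played there by the line $X$. Some preliminaries: since $\rank_\tangle(H)<\theta$ we have $H\in\tangle$ and $\lambda_M(H)=\rank_\tangle(H)$, and $E(M)\smin H\neq\emptyset$ because $\rank_\tangle(E(M))=\theta>\rank_\tangle(H)$; also $M(\tangle)$ is simple, since two distinct elements of $M$ becoming parallel in $M(\tangle)$ would require a set $Y$ with $|Y|\ge 2$, $\lambda_M(Y)\le 1$ and $Y\in\tangle$, whereas in a 3-connected $M$ the only such $Y$ are $E(M)$ and the sets $E(M)\smin g$, all excluded from $\tangle$. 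Write $N=M\contract C\delete D$ with $C,D$ disjoint, so that $C\cup D=E(M)\smin E(N)$; then $C\cup D\not\subseteq H$ (else $E(M)\smin H\subseteq E(M)\smin(C\cup D)=E(N)\subseteq H$), so we may fix $e_0\in(C\cup D)\smin H$ and, dualizing $M$, $N$ and $\tangle$ if necessary, assume $e_0\in D$, so that $M\delete e_0$ has $N$ as a minor.

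Suppose first that $M(\tangle)$ has a long line $L$ with $L\not\subseteq H$. As $L\cap H$ is a subset of the rank-$2$ flat $L$ contained in the flat $H$, it has $\tangle$-rank at most $1$, so by simplicity $|L\cap H|\le 1$; choosing $f\in L$ with $L\cap H\subseteq\{f\}$ gives $L\smin f\subseteq E(M)\smin H$ and hence $(L\smin f)\cap E(N)=\emptyset$, so Theorem~\ref{thm:contractunprotected} applied to $L$ and $f$ produces an $e\in L\smin f\subseteq E(M)\smin H$ with one of $M\delete e$, $M\contract e$ 3-connected and having $N$ as a minor. So we may assume that every long line of $M(\tangle)$ meeting $E(M)\smin H$ is contained in $H$. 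A short argument with the tangle axioms then shows that every triangle --- and dually every triad --- of $M$ meeting $E(M)\smin H$ has its other two elements in $H$: if $\{e,g,h\}$ is a triangle with $e\in E(M)\smin H$, then $\{e,g\}$ is $\tangle$-independent and, as $\lambda_M(\{e,g,h\})\le 2<\theta$, Definition~\ref{def:tangle}\eqref{it:tan3} forces $\{e,g,h\}\in\tangle$, whence $h\in\closure_\tangle(\{e,g\})$; were $g$ or $h$ outside $H$, the long line $\closure_\tangle(\{e,g\})$ would meet $E(M)\smin H$ in at least two points. In particular, an element of $E(M)\smin H$ lying in neither $\closure_M(H)$ nor $\coclosure_M(H)$ is on no triangle and no triad, so $M\contract e$ is simple, $M\delete e$ is cosimple, and by Bixby's Lemma~\ref{lem:bixby} one of them is 3-connected.

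The remaining case --- no long line of $M(\tangle)$ outside $H$ --- is the crux. Here we follow the second half of the proof of Theorem~\ref{thm:contractunprotected}: first dispose of the case in which some $e\in E(M)\smin H$ has both $M\delete e$ and $M\contract e$ with $N$ as a minor, using Bixby's Lemma and the triangle/triad control just established; then, with each $e\in E(M)\smin H$ now having exactly one ``good'' direction, examine $e_0$: if $M\delete e_0$ is not 3-connected it has a $2$-separation, which we uncross (Lemma~\ref{lem:uncrossing}) and then treat with Corollary~\ref{cor:2sepminordelcon} together with Lemmas~\ref{lem:2sepclosurecoclosuresmall} and~\ref{lem:2sepminorintersect}, either finishing directly or reducing to the case already disposed of; duality completes the argument. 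The main obstacle is precisely this last step: in Theorem~\ref{thm:contractunprotected} the free zone is literally a line, so every $2$-separation encountered is automatically tame, whereas here $E(M)\smin H$ can be a highly connected, geometrically rich set, and we must combine the triangle/triad control with repeated uncrossing to confine the relevant $2$-separations to the free zone, all the while tracking that the element finally removed lies outside $H$ and preserves $N$.
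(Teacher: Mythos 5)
Your first case — a long line $L$ of $M(\tangle)$ with $L\not\subseteq H$, for which you reduce to Theorem~\ref{thm:contractunprotected} after noting $|L\cap H|\le 1$ — is correct and is exactly what the paper does. The gap is in the remaining case. Once every long line of $M(\tangle)$ is contained in $H$, each $e\in E(M)\smin H$ lies on \emph{no} long line, and the paper exploits this with a short, direct argument that you miss: for such an $e$, if $(X,Y)$ is any $3$-separation of $M$ with $e\in X$, then $X\notin\tangle$ (else, since $|X|\ge 3$, $\lambda_M(X)=2$ and $M(\tangle)$ is simple, we would have $\rank_\tangle(X)=2$, putting $e$ on a long line); so $Y\in\tangle$, and then $e\notin\closure_M(Y)\cup\coclosure_M(Y)$ (else $Y\cup e$ would be $3$-separating, hence by Lemma~\ref{lem:crosstangle} in $\tangle$, again putting $e$ on a long line). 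By Lemma~\ref{lem:3sepgutscoguts} this means no $3$-separation of $M$ through $e$ can degenerate into a $2$-separation of $M\delete e$ or $M\contract e$, so \emph{both} $M\delete e$ and $M\contract e$ are $3$-connected, and one of them automatically has $N$ as a minor. That one sentence finishes the lemma.

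Your route instead only reaches the weaker conclusion (via Bixby) that \emph{one} of $M\delete e$, $M\contract e$ is $3$-connected, with no coupling between which one is $3$-connected and which one keeps $N$, and you then try to replay the second half of the proof of Theorem~\ref{thm:contractunprotected} inside $E(M)\smin H$. As you yourself observe, that does not go through: $E(M)\smin H$ is not a line, the $2$-separations that arise are not automatically confined, and the uncrossing/$2$-separation machinery (Lemmas~\ref{lem:2sepclosurecoclosuresmall}, \ref{lem:2sepminorintersect}, Corollary~\ref{cor:2sepminordelcon}) does not apply in the form you need. So the proposal is incomplete in precisely the case you flag as ``the crux,'' whereas the missing ingredient is not a harder version of Theorem~\ref{thm:contractunprotected} but a much simpler observation about $3$-separations at an element of the tangle matroid that is on no long line.
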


\begin{proof}
	Suppose there is an element $e\in E(M)\smin H$ such that $e$ is on no long line of $M(\tangle)$. Let $(X, Y)$ be a 3-separation of $M$ with $e \in X$. If $X \in \tangle$, then $\rank_\tangle(X) = 2$, and therefore $e$ is contained in a long line of $M(\tangle)$, a contradiction. Hence we must have $Y \in \tangle$. If $e \in \closure_M(Y)$ or $e\in\coclosure_M(Y)$, then $Y\cup e$ is 3-separating and hence (by Lemma \ref{lem:crosstangle}) $Y\cup e \in \tangle$, and again $e$ is contained in a long line of $M(\tangle)$, a contradiction. It follows from Lemma \ref{lem:3sepgutscoguts} that both $M\contract e$ and $M\delete e$ are 3-connected. One of these has $N$ as a minor, and the result follows.
	
	Now pick $e\in E(M)\smin H$, and let $X$ be a long line containing $e$. Note that $X$ intersects $H$, and therefore $E(N)$, in at most one element. The result now follows from Theorem \ref{thm:contractunprotected}.	%
\end{proof}

With this in hand, the proof of Theorem \ref{thm:delconset} is no longer difficult.

\begin{proof}[Proof of Theorem \ref{thm:delconset}]
    Let $t := \rank_\tangle(E(N))$.
	Let $C,D\subseteq E(M)\smin E(N)$ be disjoint, such that $M\contract C \delete D$ is 3-connected with $N$ as a minor, such that $\rank_\tangle(E(N)\cup C\cup D) = t + |C\cup D|$, and such that $|C\cup D|$ is maximal. Suppose $|C\cup D| < s$. Define $H := \closure_\tangle(E(N)\cup C\cup D)$.
	
	Let $M' := M\contract C\delete D$, let $\theta' := \theta - |C\cup D|$, let $\tangle'$ be the tangle of $M'$ of order $\theta'$ inherited from $\tangle$, and let $H' := \closure_{\tangle'}(E(N))$. Then
	\begin{align*}
		\theta' & = \theta - |C\cup D| \\
		        & \geq 2s + t + 1 - (s-1)\\
		        & = s + t + 2\\
		        & \geq (\rank_{\tangle'}(H') + 1) + 2.
	\end{align*}
	Clearly $\theta' \geq 3$. But then Lemma \ref{lem:deleteonetangle} implies we can find an element $e\in E(M')\smin H'$ such that one of $M'\delete e, M'\contract e$ is 3-connected with $N$ as a minor. Since $e\not\in\closure_{\tangle'}(H')$, certainly $e\not\in\closure_\tangle(H)$, contradicting the maximality of $|C\cup D|$.
\end{proof}


The final lemma of this section deals with a rather specific case in which elements can be removed simultaneously.

\begin{lemma}\label{lem:manyfans}
	Let $M$ be a 3-connected matroid, let $\tangle$ be a tangle of $M$, let $N$ be a minor of $M$, and let $X_1, \ldots, X_r$ be long lines of $M(\tangle)$  with $\rank_\tangle(X_1\cup\cdots\cup X_r) = 2r$. Suppose the following properties hold for all $i\in \{1,\ldots,r\}$: 
	\begin{enumerate}
		\item $X_i\cap E(N) = \emptyset$;
		\item $X_i$ contains a maximal fan $F_i$ of length at least four;
		\item there is an element $e_i \in F_i$ such that $M\delete e_i$ is 3-connected with $N$ as a minor.
	\end{enumerate} 
	Then $M\delete \{e_1,\ldots,e_r\}$ is 3-connected with $N$ as a minor.
\end{lemma}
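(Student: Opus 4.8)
The plan is to prove this by induction on $r$, peeling off one long line at a time.

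\medskip

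\textbf{Setup and base case.} The case $r=1$ is given by hypothesis~(\textit{iii}). For the inductive step, suppose the result holds for $r-1$, and let $M, \tangle, N, X_1,\dots,X_r$ satisfy the hypotheses. First I would delete $e_r$ to get $M' := M\delete e_r$, which is $3$-connected with $N$ as a minor. Let $\tangle'$ be the tangle of $M'$ inherited from $\tangle$. The goal is to verify that $M', \tangle', N, X_1,\dots,X_{r-1}$ (with the same fans $F_i$ and elements $e_i$ for $i<r$, suitably interpreted in $M'$) satisfy the hypotheses of the lemma with parameter $r-1$; then the induction gives that $M'\delete\{e_1,\dots,e_{r-1}\} = M\delete\{e_1,\dots,e_r\}$ is $3$-connected with $N$ as a minor, as required.

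\medskip

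\textbf{Checking the inductive hypotheses in $M'$.} Several things need to be carried over from $M$ to $M'$. The condition $X_i\cap E(N)=\emptyset$ is immediate. For the rank condition, since $e_r\in X_r$ and $\rank_\tangle(X_1\cup\cdots\cup X_r)=2r$, removing $e_r$ from the tangle matroid (Lemma~\ref{lem:tanglematroidminor}) should give $\rank_{\tangle'}(X_1\cup\cdots\cup X_{r-1})=2(r-1)$; more precisely, Lemma~\ref{lem:skew3seps} applied with $X=X_r$ and $X'=X_i$ (after noting $\rank_\tangle(X_r\cup X_i)=4$, which follows from the total rank being $2r$ and submodularity) shows each $X_i$ with $i<r$ remains a closed line of $M(\tangle')$, and it stays long because $e_r\notin X_i$. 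To get the full rank of the union, I would iterate: the sets $X_1,\dots,X_{r-1}$ are still skew in $M(\tangle')$ because their total rank cannot have dropped below $2(r-1)$ (Lemma~\ref{lem:tanglematroidminor}\eqref{it:tanmat1} gives at most a drop of one, and each $X_i$ being a long line rules out a drop). Next, $F_i\subseteq X_i$ remains a fan of $M'$ of the same length, since $e_r\notin X_i\supseteq F_i$ and triangles/triads within $X_i$ are unaffected by deleting an element outside $X_i$; maximality of $F_i$ in $M'$ follows because any larger fan would have to lie inside $X_i$ (by the uncrossing observation at the start of the proof of Theorem~\ref{thm:contractunprotected}, or directly from Lemma~\ref{lem:crosstangle}), hence would already be a fan of $M$. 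Finally, for hypothesis~(\textit{iii}) in $M'$: I need $M'\delete e_i = M\delete\{e_i,e_r\}$ to be $3$-connected with $N$ as a minor for each $i<r$.

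\medskip

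\textbf{The main obstacle.} The heart of the argument is establishing that $M\delete\{e_i,e_r\}$ is $3$-connected with $N$ as a minor --- i.e., that deleting $e_i$ and deleting $e_r$ do not interfere. That $N$ survives should follow because $e_i,e_r\notin E(N)$ and, being in maximal fans of length $\geq 4$ inside distinct skew long lines, they are "removable" in a robust sense; concretely one can re-run the relevant part of Lemma~\ref{lem:endoffan}/Theorem~\ref{thm:contractunprotected} inside $M\delete e_r$, using that $X_i$ is still a long line with a maximal fan of length $\geq 4$ in $M' = M\delete e_r$ and that $e_i$ is an end of that fan with $M'\delete e_i$ $3$-connected (this last point is really the crux and is where I expect the most care is needed). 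For $3$-connectivity, the key geometric fact is that a $2$-separation introduced in $M\delete\{e_i,e_r\}$ would, by the usual uncrossing with the $3$-separations certifying that $X_i$ and $X_r$ are long lines of the tangle matroid, be localized to within $X_i\cup X_r$; but $\rank_\tangle(X_i\cup X_r)=4$ and these are skew lines, so there is no room for such a separation once $e_i$ and $e_r$ are deleted from distinct fans. I would make this precise using Lemma~\ref{lem:uncrossing} and Lemma~\ref{lem:crosstangle} together with the structure of fans (Lemma~\ref{lem:fanprops}, Tutte's Triangle Lemma). This interference-analysis step --- showing the two deletions are independent both for connectivity and for the minor --- is the main work; everything else is bookkeeping to set up the induction.
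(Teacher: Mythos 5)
Your proposal follows the same inductive strategy as the paper: induct on $r$, delete $e_r$ to form $M'=M\delete e_r$ with inherited tangle $\tangle'$, use Lemma~\ref{lem:skew3seps} to keep each $X_i$ ($i<r$) a closed long line of $M(\tangle')$, observe that $F_i$ stays a maximal fan (since $e_r$ is outside $\closure_M(X_i)\cup\coclosure_M(X_i)$), and then re-establish hypothesis~(\textit{iii}) for $M'$ before closing the induction. The one place you hedge---whether $M'\delete e_i$ in particular (rather than merely one of the four end-operations) is $3$-connected with $N$ as a minor---is also where the paper is terse: it simply cites Lemma~\ref{lem:endoffan}. The justification, which you essentially gesture at, is that $F_i\cap E(N)=\emptyset$ and that $e_i$ is an end of $F_i$ lying in a triangle (this is forced by hypothesis~(\textit{iii}) together with Lemma~\ref{lem:fanprops}(\textit{iv}) and the fact that contracting a triangle-end of a long fan creates a parallel pair); running the argument of Lemma~\ref{lem:endoffan} with $x_1=e_i$ then yields exactly that $M'\delete e_i$ is $3$-connected with $N$ as a minor. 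Your proposed ``interference analysis'' of $2$-separations via uncrossing is an alternative, more hands-on route to the $3$-connectivity half, but it is not needed: once $F_i$ is verified to remain a maximal fan of length at least $4$ in $M'$ with $e_i$ a triangle-end, Oxley--Wu (Lemma~\ref{lem:endoffanOW}) already gives $3$-connectivity of $M'\delete e_i$, and Lemma~\ref{lem:endoffan} gives the minor. So the proposal is correct and essentially the paper's proof, with some superfluous scaffolding around the final step.
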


\begin{proof}
	We prove the result by induction on $r$, the case $r = 1$ being trivial. Let $r > 1$, and assume the result holds for all $r' < r$.	Consider $M' := M\delete e_r$, and let $\tangle'$ be the tangle of $M'$ inherited from $\tangle$. Pick any $i\in \{1,\ldots,r-1\}$. By Lemma \ref{lem:skew3seps} we have that $\closure_{\tangle'}(X_i) = X_i$. Moreover, since $e_r$ is not in the coclosure of $X_i$, the fan $F_i$ is still maximal in $M'$. Clearly $e_i$ is one of the ends of $F_i$, and then Lemma \ref{lem:endoffan} implies that $M'\delete e_i$ is 3-connected with $N$ as a minor.
	
	It follows that $M', \tangle', N, X_1, \ldots, X_{r-1}$ satisfy all the conditions of the lemma, and hence $M '' := M'\delete\{e_1,\ldots,e_{r-1}\}$ is 3-connected with $N$ as a minor, by induction. But $M'' = M\delete \{e_1,\ldots,e_r\}$, and the result follows.
\end{proof}


%


\section{The restoration graph} 
\label{sec:the_restoration_graph}

We know now that we can find sets $C$ and $D$ with $|C\cup D|$ large, such that $M\contract C \delete D$ is 3-connected with $N$ as a minor, but, for our main result, we require that either all elements are deleted or all elements are contracted. In the remainder of the paper, we will achieve this by studying subsets of $C\cup D$.

The following is a special case of \cite[Proposition 8.2.7]{ox2}.
\begin{lemma}\label{lem:almost3c}
	Let $e$ be an element of a matroid $M$. If $M\delete e$ is 3-connected but $M$ is not, then $e$ is either a loop, or a coloop, or in a parallel pair in $M$.
\end{lemma}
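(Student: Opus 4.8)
The plan is to assume that $e$ is neither a loop nor a coloop of $M$ and to deduce that $e$ then lies in a parallel pair. Since $e$ is not a coloop, $\rank(M\delete e) = \rank(M)$, so for every $Z\subseteq E(M)\smin e$ we have
\[
	\lambda_{M\delete e}(Z) = \rank_M(Z) + \rank_M\bigl(E(M)\smin(Z\cup e)\bigr) - \rank(M),
\]
which differs from $\lambda_M(Z)$ only through the ranks of sets containing $e$. Throughout I will use that $M\delete e$, being $3$-connected, is connected and has no $2$-separation.

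First I would rule out that $M$ is disconnected. If it were, let $K$ be the component of $M$ containing $e$; since $e$ is not a loop or coloop we have $|K|\geq 2$, so $M|K$ is connected and hence coloop-free, which gives $e\in\closure_M(K\smin e)$. Setting $B := E(M)\smin K$ (nonempty, as $M$ is disconnected) and using that $K$ is a separator of $M$, the displayed identity yields $\lambda_{M\delete e}(K\smin e) = \rank_M(K) + \rank_M(B) - \rank(M) = 0$ with both $K\smin e$ and $B$ nonempty --- a $1$-separation of $M\delete e$, a contradiction. Hence $M$ is connected; since it is not $3$-connected it has a $2$-separation $(A,B)$, necessarily exact, and we may label things so that $e\in A$, whence $\lambda_M(A) = 1$ and $|A|,|B|\geq 2$.

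Now the displayed identity gives $\lambda_{M\delete e}(A\smin e) = 1 - \bigl(\rank_M(A) - \rank_M(A\smin e)\bigr)$, where the correction term is $0$ or $1$. If $e\notin\closure_M(A\smin e)$ the term is $1$, so $\lambda_{M\delete e}(A\smin e) = 0$; since $|A|\geq 2$ the set $A\smin e$ is nonempty, and $(A\smin e, B)$ is then a $1$-separation of $M\delete e$ --- a contradiction. So $e\in\closure_M(A\smin e)$ and $\lambda_{M\delete e}(A\smin e) = 1$. If moreover $|A\smin e|\geq 2$, then $(A\smin e, B)$ is a $2$-separation of $M\delete e$ (as $|B|\geq 2$) --- again a contradiction. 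Therefore $|A\smin e| = 1$, say $A = \{e,f\}$ with $e\in\closure_M(\{f\})$; since $e$ is not a loop, $\{e,f\}$ is a parallel pair, as required. (Alternatively, once $A=\{e,f\}$ is an exact $2$-separating pair of the connected matroid $M$, Lemma~\ref{lem:simplecosimple2sep} says it is a parallel or a series pair, and a series pair would make $f$ a coloop of $M\delete e$, contradicting its $3$-connectivity.)

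The only step needing genuine care is checking that the separations produced inside $M\delete e$ are bona fide $1$- or $2$-separations, i.e.\ that both sides are large enough; the disconnected case is where this is least automatic, which is why it is cleanest to dispose of it first. Everything else is routine bookkeeping with the connectivity function.
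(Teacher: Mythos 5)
The paper does not prove this lemma itself; it states it as a special case of Oxley's Proposition~8.2.7 and gives no argument, so there is no in-paper proof to compare against. Your self-contained proof is correct and fills that gap cleanly. The crucial identity $\lambda_{M\delete e}(Z)=\rank_M(Z)+\rank_M\bigl(E(M)\smin(Z\cup e)\bigr)-\rank(M)$ is valid exactly because $e$ not being a coloop gives $\rank(M\delete e)=\rank(M)$; the disconnected case is correctly eliminated (with $K$ the component of $e$, $|K|\ge 2$ and $e\in\closure_M(K\smin e)$, so $(K\smin e,\,E(M)\smin K)$ is a genuine $1$-separation of $M\delete e$, both sides nonempty); and the dichotomy on whether $e\in\closure_M(A\smin e)$ pins $|A|$ to $2$, with the required size bounds $|A\smin e|\ge 1$ resp.\ $|A\smin e|,|B|\ge 2$ verified so the separations of $M\delete e$ you exhibit are bona fide. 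The final step is sound: $M$ is connected so $f$ is not a loop, hence $e\in\closure_M(\{f\})$ with $e$ not a loop gives a parallel pair; and your alternative ending via Lemma~\ref{lem:simplecosimple2sep} works too, since a series pair $\{e,f\}$ would make $f$ a coloop of $M\delete e$, which is impossible because $|E(M\delete e)|\ge 3$.
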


\begin{lemma}\label{lem:delcon4fan}
	Let $M$ be a matroid, $\tangle$ a tangle of $M$, and $\{c,d\}$ a $\tangle$-independent subset of $E(M)$ such that $M\contract c \delete d$ is 3-connected but $M\contract c$ is not. If $d$ is not in a parallel pair in $M$, then $M$ is 3-connected. Moreover, either $M\delete d$ is 3-connected or $c$ and $d$ are internal elements of a fan with size at least 4.
\end{lemma}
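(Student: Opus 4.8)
We are given a $\tangle$-independent pair $\{c,d\}$ with $M\contract c\delete d$ 3-connected but $M\contract c$ not 3-connected, and $d$ not in a parallel pair in $M$. The plan is to first apply Lemma~\ref{lem:almost3c} to the matroid $M\contract c$ together with the element $d$: since $(M\contract c)\delete d$ is 3-connected but $M\contract c$ is not, $d$ is a loop, a coloop, or in a parallel pair of $M\contract c$. The hypothesis that $d$ is not in a parallel pair in $M$ does not immediately rule out a parallel pair in $M\contract c$ (contracting $c$ can create parallel pairs), so all three cases must be examined. A loop of $M\contract c$ would mean $c\in\closure_M(\{d\})$, i.e. $\{c,d\}$ has rank $1$ in $M$, contradicting $\tangle$-independence via Lemma~\ref{lem:tangleindep} (which forces $\{c,d\}$ independent in $M$); a coloop of $M\contract c$ would make $\{c,d\}$ $2$-separating or worse, again contradicting $\tangle$-independence. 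So we land in the case that $d$ lies in a parallel pair $\{d,g\}$ of $M\contract c$, with $g\ne d$ and (by the previous reductions) $g\ne c$. This means $\{c,d,g\}$ is a triangle of $M$, or $\{d,g\}$ is already parallel in $M$ — the latter is excluded by hypothesis, so $\{c,d,g\}$ is a triangle of $M$.

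**Establishing 3-connectivity of $M$.** Now I would show $M$ itself is 3-connected. Apply Lemma~\ref{lem:almost3c} to $M$ and the element $d$ (noting $M\delete d$: we do not yet know $M\delete d$ is 3-connected, so instead argue directly). Since $\{c,d\}$ is $\tangle$-independent, $M$ has no loops or coloops among $\{c,d\}$ and, more usefully, $M$ has a tangle of order at least $3$, hence $M$ is $3$-connected up to the usual degeneracies; the cleanest route is: a tangle of order $\ge 3$ together with $\tangle$-independence of $\{c,d\}$ forces $M$ to be $3$-connected (a $2$-separation $(U,V)$ would be in $\tangle$ on the side of rank $\le 1$, making one of $c,d$ spanned, contradiction — this is essentially the argument in Lemma~\ref{lem:tangleindep}). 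Actually the required statement is only that $M$ is $3$-connected, which should follow because $M\contract c\delete d$ is $3$-connected and $\{c,d\}$ small and $\tangle$-independent; I would verify that adding back $c$ then $d$ cannot introduce a $1$- or $2$-separation given the rank conditions from $\tangle$-independence. This is the first place where care is needed.

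**The fan dichotomy.** Finally, assume $M\delete d$ is not 3-connected; I must show $c$ and $d$ are internal elements of a fan of size $\ge 4$. Since $M$ is 3-connected (just proved), $|E(M)|\ge 4$, and $\{d,g\}$ is not a parallel pair of $M$ while $\{c,d,g\}$ is a triangle $T$, apply Tutte's Triangle Lemma (Lemma~\ref{lem:Tuttriang}) to $T$ with the roles chosen so that $d$ plays the role of "$e$": we know $M\delete d$ is not 3-connected; if also $M\delete g$ is not 3-connected, the lemma yields a triad containing $d$ and exactly one of $c,g$. That triad, together with the triangle $T$, is the seed of a fan. I would then argue: if the triad is $\{d,c,x\}$ for some $x$, then $(x,c,d,g)$ (or the reverse) is a fan of length $4$ with $c,d$ internal; if the triad is $\{d,g,y\}$, then $(y,g,d,c)$ is such a fan. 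The remaining subcase is that $M\delete g$ \emph{is} 3-connected — then I would argue that $g$ could have been chosen as a removable element instead, and chase through whether this contradicts something, or whether it still produces the fan; alternatively, note that since $\{d,g\}$ is parallel in $M\contract c$, if $M\delete g$ is 3-connected then by symmetry of the parallel pair $M\contract c\delete g$ is also 3-connected with the same minor structure, and one re-runs the argument. The main obstacle is precisely this: organizing Tutte's Triangle Lemma and the possible symmetry between $c$, $d$, $g$ so that every branch delivers either the stated fan or a contradiction, while correctly tracking that it is specifically $c$ and $d$ (not $g$) that must end up internal — this requires knowing that $c$ is genuinely "between" the triangle and the triad, which should follow from $c\in T$ and $c$ being contracted rather than deleted. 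I expect the bookkeeping of which element is internal, handled via Lemma~\ref{lem:fanprops} parts on fan orderings, to be the delicate part; the rest is routine application of Lemmas~\ref{lem:almost3c}, \ref{lem:Tuttriang}, and \ref{lem:tangleindep}.
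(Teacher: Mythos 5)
Your opening step --- applying Lemma~\ref{lem:almost3c} to $M\contract c$ and $d$, ruling out the loop/coloop cases via $\tangle$-independence, and concluding $\{c,d,g\}$ is a triangle of $M$ --- matches the paper. But each of your two remaining steps has a genuine gap. For $3$-connectivity of $M$: your proposed shortcut, that a tangle of order at least $3$ plus $\tangle$-independence of $\{c,d\}$ forces $M$ to be $3$-connected, is simply false. A $2$-separating set need not contain $c$ or $d$, so the ``one of $c,d$ is spanned'' claim has no justification; you flag this yourself but do not repair it. The correct argument (as in the paper) takes a $2$-separation $(A,B)$ with $|A\smin\{c,d\}|\leq|B\smin\{c,d\}|$, notes $|A\smin\{c,d\}|\leq 1$ since otherwise $(A\smin\{c,d\},B\smin\{c,d\})$ would be a $2$-separation of the $3$-connected $M\contract c\delete d$, then uses the third tangle axiom to place $A\in\tangle$ and $\lambda_M(A)=1<\rank_\tangle(\{c,d\})$ to force $|A\cap\{c,d\}|\leq 1$. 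So $A$ must be a series or parallel pair containing exactly one of $c,d$, and this is then ruled out using the triangle $\{c,d,g\}$ (a parallel partner of $c$ becomes a loop of $M\contract c\delete d$; a series pair through $c$ must, by orthogonality with the triangle, hit $d$ or $g$, contradicting $\tangle$-independence).

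For the fan, your appeal to Tutte's Triangle Lemma has two unfixed problems. First, the lemma needs both $M\delete d$ and $M\delete g$ to fail $3$-connectivity; you have no control over $M\delete g$ and leave that subcase open. Second, and more seriously, even when the lemma does apply it may return the triad $\{d,g,y\}$, and your fan $(y,g,d,c)$ then has internal elements $g$ and $d$, not $c$ and $d$, so the required conclusion does not follow. The paper avoids both issues by applying Lemma~\ref{lem:almost3c} a second time, to $M\delete d$ and the element $c$ (in the dual): since $M\delete d\contract c$ is $3$-connected but $M\delete d$ is not, $c$ lies in a series pair of $M\delete d$, and $3$-connectivity of $M$ (now known) promotes this to a triad $\{c,d,f\}$ of $M$ containing both $c$ and $d$. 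Paired with the triangle $\{c,d,g\}$ this gives the fan $(g,c,d,f)$ with $c,d$ internal when $g\neq f$; and $g=f$ forces $M\cong U_{2,4}$, whence $M\delete d$ is $3$-connected after all, a contradiction. You should drop Tutte's Triangle Lemma and use this second application of Lemma~\ref{lem:almost3c}, which guarantees the triad lands on $\{c,d\}$.
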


\begin{proof}
	By Lemma \ref{lem:tangleindep}, neither $c$ nor $d$ is a loop or coloop in any of $M$, $M\contract c$, and $M\delete d$. 
	Suppose that $M$ is not 3-connected. Let $(A,B)$ be a 2-separation of $M$, with $|A\smin\{c,d\}| \leq |B\smin\{c,d\}|$. Then $|A\smin \{c,d\}| \leq 1$, because otherwise $(A\smin \{c,d\},B\smin \{c,d\})$ would be a 2-separation of $M\contract c \delete d$. Since $\{c,d\} \in \tangle$ and $A\smin \{c,d\} \in \tangle$, it follows from Definition \ref{def:tangle}\eqref{it:tan3} that $E(M)\smin A \not \in \tangle$. Hence $A \in \tangle$. But then $|A\cap \{c,d\}| \leq 1$, since $\lambda_M(A) = 1 < \rank_{\tangle}(\{c,d\})$. It follows that $A$ is a series pair or a parallel pair containing exactly one of $c$ and $d$.	
	
	 Since $M\contract c$ is not 3-connected, $d$ has to be in parallel with some element $e$ in that matroid. In $M$ we find no parallel pair containing $d$, so $\{c,d,e\}$ must be a triangle of $M$. The element $c$ cannot be in any parallel pair of $M$, so $c$ must be in a series pair. But then $\{c,f\}$ is a series pair for some $f\in \{d,e\}$. Since $d,e \in \closure_M(\{c,f\})$, it follows that $\{c,d,e\}$ is 2-separating, contradicting the assumption that $\{c,d\}$ is $\tangle$-independent. We conclude that $M$ is 3-connected.
	
	For the second statement, suppose that $M\delete d$ is not 3-connected. Then $c$ must be in a series pair, say $\{c,f\}$. Since $M$ is 3-connected, we must have that $\{c,d,f\}$ is a triad of $M$. This implies that $e \neq f$ or $M \cong U_{2,4}$, and the result follows.
\end{proof}

\begin{lemma}\label{lem:undeletecompensate}
	Let $M$ be a 3-connected matroid, $\tangle$ a tangle of $M$, and $C,D$ disjoint subsets of $E(M)$ such that $C\cup D$ is $\tangle$-independent and $M\contract C \delete D$ is 3-connected. For each $d\in D$, either $M\contract C \delete (D\smin d)$ is 3-connected or there is an element $c\in C$ such that $M\contract (C\smin c)\delete (D\smin d)$ is 3-connected.
\end{lemma}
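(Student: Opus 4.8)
The plan is to fix $d\in D$, write $M_0:=M\contract C\delete(D\smin d)$, and note that $M_0\delete d=M\contract C\delete D$ is $3$-connected. If $M_0$ is $3$-connected we are in the first case of the conclusion, so assume it is not. By Lemma~\ref{lem:almost3c}, $d$ is then a loop, a coloop, or in a parallel pair of $M_0$. By Lemma~\ref{lem:tangleindep}, $C\cup D$ is both independent and coindependent in $M$, so (via Lemma~\ref{lem:closurecomplement}) $d$ is neither a loop nor a coloop of $M_0$; hence $d$ lies in a parallel pair $\{d,g\}$ of $M_0$. I would deal with very small $M$ separately: if $|E(M)|\le 3$ then $C\cup D$, being independent and coindependent, has at most one element, so either $D=\emptyset$ (nothing to prove) or $C=\emptyset$ and $M_0=M$ is $3$-connected; thus I may assume $M$, and likewise $M\contract C\delete D$, is large enough to have no loops, coloops, or parallel pairs.

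The key step is choosing which element of $C$ to restore. Since $\{d,g\}$ is a circuit of $M\contract C$, there is a circuit $Z$ of $M$ with $Z\smin C=\{d,g\}$; moreover $C\cup\{d,g\}$ has rank $|C|+1$ and $|C|+2$ elements, so $Z$ is the only circuit of $M$ contained in $C\cup\{d,g\}$. As $M$ has no parallel pairs, $Z\ne\{d,g\}$, so I may choose $c\in Z\cap C$. Set $M_1:=M\contract(C\smin c)\delete(D\smin d)$, so that $M_1\contract c=M_0$ and $M_1\contract c\delete d=M\contract C\delete D$. Since $C\cup D$ is $\tangle$-independent, repeated application of Lemma~\ref{lem:indeptangleminor} (and its dual, using that $\tangle$ is a tangle of $M^*$ by Lemma~\ref{lem:tangleprops}) shows that $\{c,d\}$ is $\tangle_1$-independent, where $\tangle_1$ is the tangle of $M_1$ inherited from $\tangle$.

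Now I would apply Lemma~\ref{lem:delcon4fan} to $M_1$ and the pair $\{c,d\}$: indeed $M_1\contract c\delete d$ is $3$-connected while $M_1\contract c=M_0$ is not, so it remains only to show that $d$ is not in a parallel pair of $M_1$, after which Lemma~\ref{lem:delcon4fan} yields that $M_1$ is $3$-connected, i.e.\ the second case of the conclusion. Suppose $d$ is parallel to an element $x$ in $M_1$; then $\{d,x\}$ is a circuit of $M_1$, hence of $M\contract(C\smin c)$. If $x=c$, this forces $\rank_M(C\cup\{d\})=|C|$, contrary to $C\cup\{d\}$ being independent. If $x=g$, then $(C\smin c)\cup\{d,g\}$ is dependent, so the circuit it contains is $Z$, forcing $c\notin Z$, a contradiction. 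Otherwise $x\in E(M)\smin(C\cup D)$ with $x\ne g$, and $\{d,x\}$ is a circuit of $M_1$ avoiding $c$, so in $M_0=M_1\contract c$ either $\{d,x\}$ is a circuit or $x$ is a loop (as $d$ is not a loop of $M_0$); in the first case $x$ lies in the parallel class of $d$ in $M_0$, which also contains $g$, so $\{g,x\}$ is a parallel pair of $M\contract C\delete D$, and in the second $x$ is a loop of $M\contract C\delete D$ — either of which contradicts its $3$-connectivity. Hence $d$ is in no parallel pair of $M_1$, and the proof is complete.

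I expect the main obstacle to be the choice of $c$: it has to be taken inside the witnessing circuit $Z$ so that un-contracting $c$ destroys the circuit $\{d,g\}$, which is exactly what keeps $d$ out of a parallel pair once it is restored; with the wrong $c$ the reduction to Lemma~\ref{lem:delcon4fan} fails. The remaining work — checking that $\tangle$-independence survives the passage to $M_1$, and the bookkeeping in the parallel-class case — is routine, as is the handling of small matroids.
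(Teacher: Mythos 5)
Your proof is correct and follows essentially the same route as the paper's: both locate a circuit of $M$ inside $C\cup\{d,g\}$ witnessing the parallel pair created in $M\contract C\delete(D\smin d)$, pick $c$ from that circuit, and reduce to Lemma~\ref{lem:delcon4fan}. You are somewhat more careful than the paper in spelling out that $Z$ is the unique circuit and in verifying the hypothesis of Lemma~\ref{lem:delcon4fan} that $d$ is in no parallel pair of $M\contract(C\smin c)\delete(D\smin d)$, which the paper leaves implicit after asserting that $\{c,d,e\}$ is a triangle.
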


\begin{proof}
	Pick $d\in D$ such that $M\contract C \delete (D\smin d)$ is not 3-connected. Call the resulting matroid $M'$. It follows from Lemma \ref{lem:indeptangleminor} that $d$ is neither a loop nor a coloop of $M'$. It is impossible for $d$ to be in a series pair, so $d$ must be in a parallel pair, say with an element $e$.
	
	The set $\{d,e\}$ is not 2-separating in $M$, so there must be a circuit $Y$ with  $\{d,e\} \subsetneq Y \subseteq C\cup \{d,e\}$. Pick $c\in Y\cap C$. In $M\contract (C\smin c) \delete (D\smin d)$, we must have that $\{c,d,e\}$ is a triangle. Lemma \ref{lem:delcon4fan} now implies the result.
\end{proof}

It is convenient to keep track of deletions and contractions using a certain bipartite graph. Let us fix some notation. If $G = (V,E)$ is a graph, and $S\subseteq V$, then $G[S]$ is the \emph{induced subgraph} on $S$. For a vertex $v\in V$ we denote the set of vertices adjacent to $v$ but not equal to $v$ by $N(v)$. 

\begin{definition}\label{def:restoration}
	Let $M$ be a 3-connected matroid, and $C,D$ be disjoint subsets of $E(M)$ such that $M\contract C \delete D$ is 3-connected. The \emph{restoration graph} of $M$ with respect to $C$ and $D$, denoted by $R(M,C,D)$, is a bipartite graph with vertex set $C\cup D$ and edge set 
	\begin{align*}
		\{ cd : c\in C, d\in D, \textrm{ and } M\contract (C\smin c)\delete (D\smin d) \textrm{ is 3-connected}\}.
	\end{align*}
\end{definition}
Some more terminology: if $N = M\contract C \delete D$, and $Z\subseteq C\cup D$, then we say that $M\contract (C\smin Z) \delete (D\smin Z)$ was obtained from $N$ by \emph{restoring} $Z$. We say that an element $e \in C\cup D$ is \emph{privileged} if restoring $e$ yields a 3-connected matroid. 

If the set of vertices of a restoration graph is $\tangle$-independent for a tangle $\tangle$ of $M$, then it has many attractive properties. We list a few.

\begin{lemma}\label{lem:restgraphconn}
	Let $M$ be a 3-connected matroid, let $\tangle$ be a tangle of $M$, and let $C, D$ be disjoint subsets of $E(M)$ such that $C\cup D$ is $\tangle$-independent and $M\contract C \delete D$ is 3-connected. Then the restoration graph $R(M,C,D)$ has no isolated non-privileged vertices.
\end{lemma}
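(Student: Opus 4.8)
The plan is to deduce this almost immediately from Lemma \ref{lem:undeletecompensate}, after setting up the right duality. First I would record the symmetry of the situation: by Lemma \ref{lem:connprop}\eqref{it:con3} connectivity is self-dual, and by Lemma \ref{lem:tangleprops}\eqref{it:tanprop3} the tangle $\tangle$ is also a tangle of $M^*$; moreover $M^*\contract(D\smin d)\delete(C\smin c)$ is the dual of $M\contract(C\smin c)\delete(D\smin d)$, so a matroid in the first family is 3-connected iff the corresponding one in the second is. Consequently $R(M^*, D, C)$ has exactly the same vertex set $C\cup D$, the same edge set, and the same privileged vertices as $R(M, C, D)$. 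So it suffices to handle an isolated non-privileged vertex $d\in D$; the case of an isolated vertex in $C$ follows by passing to $M^*$.

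So suppose, for contradiction, that $d\in D$ is isolated in $R(M,C,D)$ but not privileged. That $d$ is not privileged means precisely that restoring $d$, i.e. the matroid $M\contract C\delete(D\smin d)$, is not 3-connected. Now apply Lemma \ref{lem:undeletecompensate} (whose hypotheses — $M$ 3-connected, $C\cup D$ $\tangle$-independent, $M\contract C\delete D$ 3-connected — are exactly what we are given): either $M\contract C\delete(D\smin d)$ is 3-connected, or there is a $c\in C$ with $M\contract(C\smin c)\delete(D\smin d)$ 3-connected. The first alternative is ruled out since $d$ is not privileged. Hence the second holds, so by Definition \ref{def:restoration} the pair $cd$ is an edge of $R(M,C,D)$, contradicting the assumption that $d$ is isolated.

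I do not expect any real obstacle here: the content is entirely packaged in Lemma \ref{lem:undeletecompensate}, and the only points requiring a line of care are (a) the observation that "restoring $d$" is the $c=\emptyset$ instance appearing in that lemma's dichotomy, so "non-privileged" exactly kills the first alternative, and (b) the verification that the restoration graph and the notions of isolated/privileged vertex are invariant under dualization, which is what lets us reduce from the two cases $d\in D$ and $c\in C$ to a single argument.
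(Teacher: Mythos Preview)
Your argument is correct and matches the paper's proof exactly: the paper simply states that the lemma is an immediate consequence of Lemma~\ref{lem:undeletecompensate} and its dual, and your write-up merely spells out the duality step (via Lemma~\ref{lem:connprop}\eqref{it:con3} and Lemma~\ref{lem:tangleprops}\eqref{it:tanprop3}) in slightly more detail.
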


\begin{proof}
	This is an immediate consequence of Lemma \ref{lem:undeletecompensate} and its dual.
\end{proof}

\begin{lemma}\label{lem:restoresubgraph}
	Let $M$ be a 3-connected matroid, let $\tangle$ be a tangle of $M$, and let $C, D$ be disjoint subsets of $E(M)$ such that $C\cup D$ is $\tangle$-independent and $M\contract C \delete D$ is 3-connected. Let $G = R(M,C,D)$. Let $S\subseteq C\cup D$. Restoring $S$ yields a 3-connected matroid if and only if $G[S]$ has no isolated non-privileged vertices.
\end{lemma}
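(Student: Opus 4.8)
The plan is to treat the two implications separately, relying throughout on the observation that restoration is \emph{local}. Write $N:=M\contract C\delete D$, and for $T\subseteq C\cup D$ let $M_T:=M\contract(C\smin T)\delete(D\smin T)$ be the matroid obtained from $N$ by restoring $T$, so that $M_{C\cup D}=M$ and $M_S$ is the matroid whose 3-connectivity is at issue. The first step is to check that, for every $T$, the quadruple $(M_T,\tangle_T,C\cap T,D\cap T)$, where $\tangle_T$ is the tangle of $M_T$ inherited from $\tangle$, is again an instance of Definition \ref{def:restoration}: we have $M_T\contract(C\cap T)\delete(D\cap T)=N$, which is 3-connected, and $C\cap T\cup D\cap T=T$ is independent in $M_T(\tangle_T)$ by applying Lemma \ref{lem:indeptangleminor} and its dual once for each element of $(C\cup D)\smin T$. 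Moreover, since deletions and contractions of disjoint sets commute, $M_T\contract((C\cap T)\smin c)\delete((D\cap T)\smin d)=M\contract(C\smin c)\delete(D\smin d)$ for $c\in C\cap T,\ d\in D\cap T$, and similarly with the ``privilege'' minors; hence $R(M_T,C\cap T,D\cap T)=G[T]$, with a vertex privileged in the small instance iff it is privileged in $G$. The forward direction is then immediate: if $M_S$ is 3-connected, the instance $(M_S,\tangle_S,C\cap S,D\cap S)$ meets the hypotheses of Lemma \ref{lem:restgraphconn}, which says that its restoration graph $G[S]$ has no isolated non-privileged vertex.

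For the reverse direction I would induct on $|S|$, proving that if $G[S]$ has no isolated non-privileged vertex then $M_S$ is 3-connected. The cases $|S|\le 1$ are trivial. For the inductive step I would peel off the element restored last. If $G[S]$ has an isolated vertex $e$, it is privileged; removing it keeps the graph hypothesis, so $M_{S\smin e}$ is 3-connected by induction, and $M_S$ is obtained from it by co-extending (if $e\in C$) or extending (if $e\in D$) at $e$. As $S$ is independent in $M_S(\tangle_S)$, the element $e$ is neither a loop nor a coloop of $M_S$ (Lemma \ref{lem:tangleindep}), so by Lemma \ref{lem:almost3c} the matroid $M_S$ is 3-connected unless (say $e\in C$) $e$ lies in a series pair $\{e,g\}$ of $M_S$. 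Tracing $\{e,g\}$ through the contractions and deletions that turn $M_S$ into the 3-connected matroid $M_{\{e\}}$ (available because $e$ is privileged), one sees that if $g\in E(N)$ or $g\in D\cap S$ then $\{e,g\}$ persists as a series pair of $M_{\{e\}}$, or $e$ becomes a coloop there --- impossible for a 3-connected matroid (modulo tiny matroids, which are handled by hand using $\tangle$-independence). The surviving possibility, $g\in C\cap S$, makes $\{e,g\}$ a two-element independent cocircuit, hence a $2$-separation of $M_S$; this is the case one must exclude, and I would do so by applying Lemma \ref{lem:undeletecompensate} and Lemma \ref{lem:delcon4fan} in a suitable sub-instance, the fan of length $\ge 4$ that Lemma \ref{lem:delcon4fan} produces being forbidden because it would contradict the $\tangle$-independence of $S$ (a rank computation in $M(\tangle)$ in the spirit of Lemma \ref{lem:skew3seps}).

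When $G[S]$ has no isolated vertex at all, one can still remove a single vertex while preserving the graph hypothesis \emph{unless} every component of $G[S]$ is a single edge $\{c,d\}$ with both ends non-privileged; a two-line graph argument shows these are the only obstructions. In that remaining configuration I would instead remove a whole such edge, apply induction to $S\smin\{c,d\}$, and restore $c$ and $d$ together, invoking Lemma \ref{lem:delcon4fan} directly and using that the edge $cd$ of $G$ records the 3-connectivity of the corresponding two-element restoration from $N$. I expect the genuinely delicate point to be exactly the step highlighted above: converting the combinatorial condition ``$e$ is privileged, or $e$ has a neighbour in $G[S]$'' into a matroid argument that forbids $e$ from lying in a series (or parallel) pair of $M_S$. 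That is where the tangle hypotheses (via Lemmas \ref{lem:tangleindep} and \ref{lem:skew3seps}) and the restoration bookkeeping (Lemmas \ref{lem:undeletecompensate} and \ref{lem:delcon4fan}) all have to be combined; the remainder is routine manipulation of minors of minors and of inherited tangles.
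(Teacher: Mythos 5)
Your forward direction is correct and takes a genuinely different, cleaner route than the paper's. The paper argues directly about the circuit connecting a parallel pair through $C\cap S$ and then invokes Lemma \ref{lem:delcon4fan}. You instead observe that, once $M_S$ is assumed 3-connected, the triple $(M_S,\tangle_S,C\cap S,D\cap S)$ meets the hypotheses of Lemma \ref{lem:restgraphconn}, that restoration is local so $R(M_S,C\cap S,D\cap S)=G[S]$ with matching privileged vertices, and then simply invoke that lemma. This is a nice structural observation the paper does not exploit, and it trades a hands-on circuit argument for a direct appeal to prior machinery.

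The reverse direction, however, has a real gap, which you yourself flag as ``the genuinely delicate point.'' Your induction peels off an isolated privileged vertex $e$, or a $K_2$ component $\{c,d\}$, and must show that $M_S$ is 3-connected given that the smaller restoration is. You supply a complete (if somewhat roundabout) argument only for the isolated privileged case; there the argument can be shortened considerably, since in the surviving subcase ($e,g\in C\cap S$, $\{e,g\}$ a series pair of $M_S$) you immediately get that $S$ contains a two-element cocircuit, contradicting the coindependence of $S$ in $M_S$ provided by Lemma \ref{lem:tangleindep} --- no fan or rank computation needed. For the case of a removable non-isolated vertex, and for the $K_2$ component case, you only gesture at Lemmas \ref{lem:undeletecompensate} and \ref{lem:delcon4fan} and ``a rank computation in the spirit of Lemma \ref{lem:skew3seps}'' without giving the argument. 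The missing idea, which is the heart of the paper's proof, is this: if $M_S$ has a 2-separation $(A,B)$ while $M_{S\smin\{c,d\}}$ is 3-connected, then one side, say $A$, has at most one element outside $\{c,d\}$; then $A$ lies in $\tangle_S$, and $\tangle_S$-independence of $S$ forces $|A\cap S|\le 1$, so $A$ lies entirely inside the ground set of $M_{\{c,d\}}$, the minor that witnesses the edge $cd$; since $M_{\{c,d\}}$ is a minor of $M_S$, this gives $\lambda_{M_{\{c,d\}}}(A)\le 1$, contradicting the 3-connectivity encoded by the edge $cd$. Your proposal never converts ``$d$ has a neighbour $c$ in $G[S]$'' into a usable 3-connectivity of some small restoration in this way, and the Lemma \ref{lem:delcon4fan} route you suggest does not directly apply (it concerns a single contraction being non-3-connected, and neither hypothesis nor conclusion lines up with what is needed here). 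Until that step is supplied, the reverse implication is unproved.
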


\begin{proof}
	Define $N := M\contract C \delete D$. Assume first that there is a set $S$ such that $G[S]$ has an isolated non-privileged vertex $d$, yet the matroid $M'$ obtained from $N$ by restoring $S$ is 3-connected. Using duality if necessary we may assume $d\in D$. The matroid obtained from $N$ by restoring $d$ is not 3-connected, so $d$ must be in a parallel pair with some element $e$ in that matroid. Clearly $\{d,e\}$ is not a parallel pair in $M'$, so there must be a circuit containing $d,e$, and at least one element $c\in C\cap S$. But then $c$ and $d$ satisfy all conditions of Lemma \ref{lem:delcon4fan}, and hence $cd$ is an edge of $G$, a contradiction.
	
	We will prove the converse by induction on the size of the set $S$ to be restored. The case $S = \emptyset$ is trivial, so we may assume $|S| \geq 1$. Pick $d \in S$ such that $d$ has minimum degree in the graph $G[S]$. If there is a choice, pick $d$ to be non-privileged, and consider $G[S\smin d]$. Using duality if necessary we may assume $d\in D$. Let $M'$ be the matroid obtained from $N$ by restoring $S$, and let $\tangle'$ be the tangle inherited from $\tangle$.
	
	First we assume that $G[S\smin d]$ has no isolated non-privileged vertices. By induction, restoring $S\smin d$ yields a 3-connected matroid. If $M'$ does have a 2-separation, then $d$ must be a loop or in parallel with another element in $E(M')$. The former cannot happen since $S$ is $\tangle'$-independent. Hence $d$ must be in parallel with an element $f$ of $E(M')$. Note that $f \not \in C\cup D$, because this again contradicts $\tangle'$-independence. Let $c$ be a neighbour of $d$ in $G[S]$, and let $N'$ be the matroid obtained from $N$ by restoring $\{c,d\}$. Then $N'$ is 3-connected. But $d,f \in E(N')$ and $N'$ is a minor of $M'$, so $\rank_{N'}(\{d,f\}) \leq \rank_{M'}(\{d,f\}) = 1$, a contradiction. It follows that restoring $S$ yields a 3-connected matroid.
	
	We may now assume that $G[S\smin d]$ has an isolated non-privileged vertex $c \in C$. In $G[S]$, there must be an edge $cd$, and both $c$ and $d$ have degree one. By induction, then, restoring $S\smin \{c,d\}$ yields a 3-connected graph. Suppose that $M'$ has a 2-separation $(A,B)$. The matroid $M'\contract c\delete d$ is 3-connected, so we must have $|A\smin\{c,d\}| \leq 1$ or $|B\smin\{c,d\}|\leq 1$. Assume, by relabelling if necessary, the former. Obviously $A \in \tangle$. Therefore $|A\cap S| \leq 1$. If $M''$ is the matroid obtained from $N$ by restoring $\{c,d\}$, then $A \subseteq E(M'')$, and $M''$ is a minor of $M'$. Hence $\lambda_{M''}(A) \leq 1$, contradicting the definition of the restoration graph.
\end{proof}

%


\section{The main result} 
\label{sec:the_main_result}

Before proving the main theorem, we find two structures in the restoration graph that will lead to the desired result. The first such structure, an imbalance between the sides, will be instrumental in our proof.

\begin{lemma}\label{lem:balance}
	Let $M$ be a matroid, let $\tangle$ be a tangle of $M$, and let $C,D\subseteq E(M)$ be such that $C\cup D$ is $\tangle$-independent, $M\contract C \delete D$ is 3-connected, and $|C| - |D| \geq k$. Then there is a subset $C'\subseteq C$ such that $|C'| \geq k$ and $M\contract C'$ is 3-connected.
\end{lemma}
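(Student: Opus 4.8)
The plan is to translate the statement into a statement about the restoration graph $G := R(M,C,D)$ and then feed it through Lemmas~\ref{lem:restgraphconn} and~\ref{lem:restoresubgraph}. Write $N := M\contract C\delete D$. The key observation is purely bookkeeping: for a subset $C''\subseteq C$, restoring the set $Z := C''\cup D$ from $N$ produces exactly $M\contract(C\smin C'')$, since $C\smin Z = C\smin C''$ and $D\smin Z = \emptyset$. Hence it suffices to produce a $C''\subseteq C$ with $|C''|\le |D|$ such that restoring $Z=C''\cup D$ yields a $3$-connected matroid; then $C' := C\smin C''$ satisfies $|C'| = |C| - |C''| \ge |C| - |D| \ge k$, and $M\contract C'$ is $3$-connected, which is what we want.

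To build $C''$: $G$ is bipartite with parts $C$ and $D$, and by Lemma~\ref{lem:restgraphconn} it has no isolated non-privileged vertices, so every non-privileged vertex $d\in D$ has a neighbour in $G$, which necessarily lies in $C$. For each non-privileged $d\in D$ I would fix one such neighbour $c(d)\in C$ and set $C'' := \{\, c(d) : d\in D,\ d\text{ non-privileged}\,\}$. Different choices of $d$ may yield the same $c(d)$, but in any case $|C''|\le |D|$, as required.

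It then remains to check that $G[Z]$, with $Z = C''\cup D$, has no isolated non-privileged vertex. A non-privileged $d\in D$ is joined in $G$ to $c(d)\in C''\subseteq Z$, so it is not isolated in $G[Z]$; and any $c\in C''$ equals $c(d)$ for some $d\in D\subseteq Z$ and is joined to that $d$, so it too is not isolated in $G[Z]$. Thus the only vertices of $Z$ that could be isolated in $G[Z]$ are privileged vertices of $D$, and Lemma~\ref{lem:restoresubgraph} then gives that restoring $Z$ yields a $3$-connected matroid, completing the proof. I do not expect a genuine obstacle here: the argument is short, and the only point that requires care is the identification of ``restoring $C''\cup D$ from $N$'' with ``contracting only $C\smin C''$ in $M$'', so that the cardinality bound $|C''|\le |D|\le |C|-k$ turns into $|C'|\ge k$.
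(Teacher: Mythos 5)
Your argument is correct and matches the paper's own proof essentially verbatim: both build a subset $C''\subseteq C$ of size at most $|D|$ covering all non-privileged vertices of $D$, check that $G[C''\cup D]$ has no isolated non-privileged vertices, observe that restoring $C''\cup D$ yields $M\contract(C\smin C'')$, and apply Lemma~\ref{lem:restoresubgraph}. The paper takes $C''$ to be a minimal such covering set (from which $|C''|\le|D|$ follows), whereas you construct it explicitly by choosing one neighbour $c(d)$ per non-privileged $d$; this is an immaterial difference.
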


\begin{proof}
	Let $G := R(M,C,D)$. Let $C'' \subseteq C$ be a minimal set such that each non-privileged $d \in D$ has a neighbour in $C''$. Clearly $|C''| \leq |D|$, and $G[C''\cup D]$ has no isolated non-privileged vertices. By Lemma \ref{lem:restoresubgraph}, restoring $C''\cup D$ yields a 3-connected matroid. This matroid is $M\contract (C\smin C'')$, and $|C| - |C''| \geq |C| - |D| \geq k$.
\end{proof}

We can use an \emph{induced} matching in the restoration graph to increase the imbalance between the sides, through the following lemma:

\begin{lemma}\label{lem:inducedmatching}
	Let $k$ be an integer, let $M$ be a matroid, let $\tangle$ be a tangle of $M$, let $N$ be a minor of $M$, let $t := \rank_\tangle(E(N))$, and let $C,D$ be disjoint subsets of $E(M)$ such that $\rank_\tangle(E(N)\cup C \cup D) = t + |C\cup D|$. If $R(M,C,D)$ contains an induced matching with at least $2k$	edges and no privileged vertices, then at least one of the following holds:
	\begin{enumerate}
		\item There is a set $C'\subseteq E(M)$ such that $\rank_\tangle(E(N)\cup C') = t + |C'|$, such that $|C'| \geq k$, and such that $M\contract C'$ is 3-connected with $N$ as a minor;
		\item There is a set $D'\subseteq E(M)$ such that $\rank_\tangle(E(N)\cup D') = t + |D'|$, such that $|D'| \geq k$, and such that $M\delete D'$ is 3-connected with $N$ as a minor.
	\end{enumerate}
\end{lemma}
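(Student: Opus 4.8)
The plan is to use the induced matching to convert the partition $(C,D)$ into a new one with a large imbalance between its two sides, and then invoke Lemma~\ref{lem:balance} (or its dual, obtained by passing to $M^*$, which is legitimate since $\tangle$-independence and $3$-connectivity are self-dual). Write $N_0:=M\contract C\delete D$, which is $3$-connected and has $N$ as a minor, put $G:=R(M,C,D)$, and let $c_1d_1,\dots,c_{2k}d_{2k}$ be the induced matching, none of whose $4k$ ends is privileged. Note that $C\cup D$ is $\tangle$-independent, since $\rank_\tangle(E(N)\cup C\cup D)=t+|C\cup D|$.

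First I would analyse a single matching edge $c_id_i$ in isolation. Since $c_id_i\in E(G)$, restoring $\{c_i,d_i\}$ yields, by Lemma~\ref{lem:restoresubgraph}, a $3$-connected matroid $M_i:=M\contract(C\smin c_i)\delete(D\smin d_i)$ with $M_i\contract c_i\delete d_i=N_0$; and because $c_i,d_i$ are not privileged, neither $M_i\contract c_i$ nor $M_i\delete d_i$ is $3$-connected. Applying Lemma~\ref{lem:delcon4fan} to $M_i$ and the $\tangle_i$-independent pair $\{c_i,d_i\}$ (the hypothesis that $d_i$ lies in no parallel pair is automatic, since $M_i$ is $3$-connected on more than three elements in the non-degenerate case) shows that $c_i$ and $d_i$ are internal elements of a fan of $M_i$ of length at least $4$; following the proof of that lemma, $M_i$ has a triangle $\{c_i,d_i,g_i\}$ and a triad $\{c_i,d_i,h_i\}$ with $g_i,h_i\in E(N_0)$, and $g_i\neq h_i$ unless $M_i\cong U_{2,4}$ (a case, along with the remaining tiny matroids, that I would dispose of by hand). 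Extending to a maximal fan and applying Lemma~\ref{lem:endoffanOW} together with Lemma~\ref{lem:endoffan} then gives, for each $i$, a fan end $x_i$ such that one of $M_i\contract x_i$, $M_i\delete x_i$ is $3$-connected with $N$ as a minor (so in particular $x_i\notin E(N)$); call $i$ a \emph{contraction index} in the first case and a \emph{deletion index} in the second. A key additional point — this is where ends lying in $E(N)$ must be dealt with, via Lemma~\ref{lem:endoffan} and, when necessary, the $2$-separation corollaries of Section~2 — is that $x_i$ may be chosen inside $\closure_\tangle(\{c_i,d_i\})$, so that the ``move'' that replaces the removal of $c_i,d_i$ by the restoration of $c_i,d_i$ together with the removal of the single element $x_i$ keeps the new ground-set-over-$N$ skew.

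Such a move at a contraction index turns $(C,D)$ into $\bigl((C\smin c_i)\cup\{x_i\},\,D\smin d_i\bigr)$, raising $|C|-|D|$ by $1$; at a deletion index it lowers $|C|-|D|$ by $1$. I would then argue — by a skew-fan argument in the spirit of Lemma~\ref{lem:manyfans}, using that the lines $\closure_\tangle(\{c_i,d_i\})$ are pairwise skew and disjoint from $E(N)$ (again from $\rank_\tangle(E(N)\cup C\cup D)=t+|C\cup D|$, with Lemma~\ref{lem:skew3seps}) — that any collection of these moves whose indices are all of one type may be carried out simultaneously, the result being $3$-connected with $N$ as a minor and with the ground-set-over-$N$ still skew. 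Now let $P$ and $Q$ be the sets of contraction and deletion indices; every index lies in $P\cup Q$, so $|P|+|Q|\geq 2k$, and hence $|P|\geq k-(|C|-|D|)$ or $|Q|\geq k+(|C|-|D|)$. In the first case, performing the contraction moves at all indices of $P$ produces a pair $(\tilde C,\tilde D)$ with $\tilde C\cup\tilde D$ $\tangle$-independent, $M\contract\tilde C\delete\tilde D$ $3$-connected with $N$ as a minor, $\rank_\tangle(E(N)\cup\tilde C\cup\tilde D)=t+|\tilde C\cup\tilde D|$, and $|\tilde C|-|\tilde D|=(|C|-|D|)+|P|\geq k$; Lemma~\ref{lem:balance} then yields $C'\subseteq\tilde C$ with $|C'|\geq k$ and $M\contract C'$ $3$-connected, and both the rank equality and ``$N$ a minor'' are inherited by any subset of $\tilde C$, giving conclusion (i). The second case is symmetric (apply the dual of Lemma~\ref{lem:balance}) and gives conclusion (ii).

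The main obstacle is the two-part bookkeeping in the penultimate paragraph: first, showing that each matching edge genuinely furnishes a well-behaved contraction-or-deletion move whose moved element can be taken in $\closure_\tangle(\{c_i,d_i\})$ and outside $E(N)$ — which forces a careful treatment of fan ends meeting $E(N)$ and of the few degenerate small matroids; and second, showing that moves at many indices of one type can be executed at once without destroying $3$-connectivity, the minor $N$, or the skewness of the ground-set-over-$N$, for which the skew-fan machinery of Lemmas~\ref{lem:skew3seps} and~\ref{lem:manyfans} must be adapted from the deletion setting to the present restoration setting.
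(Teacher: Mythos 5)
Your overall strategy is the same as the paper's — find, via Lemma~\ref{lem:delcon4fan}, a fan in which each matching edge's endpoints are internal; pick a suitable fan end $x_i$ via Lemma~\ref{lem:endoffan}; split the indices by whether the move deletes or contracts $x_i$; use a pigeonhole argument to produce a large imbalance; and finish with Lemma~\ref{lem:balance}. Where you diverge, and where there is a genuine gap, is in choosing the fan ends \emph{per edge}, in the matroid $M_i$ obtained by restoring only $\{c_i,d_i\}$ from $N_0 := M\contract C\delete D$. You then want to execute all contraction-type moves at once, producing the matroid $\hat M\contract\{x_i : i\in P\}$, where $\hat M$ is $N_0$ with the pairs $\{c_j,d_j\}$ for $j\in P$ restored. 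But your choice of $x_i$ only guarantees that $M_i\contract x_i$ is $3$-connected with $N$ as a minor; it does not give you that $\hat M\contract x_i$ is $3$-connected (the $N$-minor part is fine, since $M_i\contract x_i$ is a minor of $\hat M\contract x_i$, but $3$-connectivity is not inherited upward from minors). That is exactly the hypothesis (iii) one needs in order to run the Lemma~\ref{lem:manyfans} induction on $\hat M$, so the ``skew-fan argument in the spirit of Lemma~\ref{lem:manyfans}'' you gesture at cannot be started. The iterated Lemma~\ref{lem:skew3seps} argument does show that the fan (and its end $x_i$) persists from $M_i$ to $\hat M$, but it does not tell you whether, in $\hat M$, it is deletion or contraction of that end which preserves $3$-connectivity and the $N$-minor, so your ``contraction index'' classification is only valid inside $M_i$.

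The paper avoids this by making a single global choice: restore \emph{all} of $V(H)$ at once to get one fixed matroid $M'$, inherit a tangle $\tangle'$, and do the entire fan analysis — Lemma~\ref{lem:delcon4fan}, the long lines $X_{cd}$, the maximal fans $F_{cd}$, the ends $x_{cd}$, and the deletion-versus-contraction classification via Lemma~\ref{lem:endoffan} — in $M'$. The pigeonhole then lets one apply Lemma~\ref{lem:manyfans} (or its dual) directly to $M'$, since hypothesis (iii) of that lemma is literally a statement about $M'$. If you replace your per-edge $M_i$ by this single $M'$ and choose the fan ends there, the rest of your argument (the arithmetic on $|C|-|D|$, the skewness verification via $\rank_\tangle(E(N)\cup C\cup D)=t+|C\cup D|$, and the appeal to Lemma~\ref{lem:balance}) goes through essentially as you sketched it.
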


\begin{proof}
	Define $G := R(M,C,D)$. By dualizing $M$ and $N$, and swapping $C$ and $D$ if necessary, assume $|D| \leq |C|$. If $|C| - |D| \geq k$, then the result follows from Lemma \ref{lem:balance}, so assume $|C| - |D| = r < k$. Let $H$ be a maximum-sized induced matching of $G$ with at least $2k$
	edges and no privileged vertices, and let $M'$ be the matroid obtained from $M$ by restoring $V(H)$. By Lemma \ref{lem:restoresubgraph}, $M'$ is 3-connected, $M'\delete d$ is not 3-connected for each $d\in D\cap V(H)$, and $M'\contract c$ is not 3-connected for each $c \in C\cap V(H)$. By Lemma \ref{lem:delcon4fan}, if $c$ and $d$ are adjacent vertices in the graph $H$, then they are internal elements of a fan $F$ of $M'$ of length at least 4. 
	
	Let $\tangle'$ be the tangle of $M'$ inherited from $\tangle$, and for each edge $cd \in E(H)$, let $X_{cd}$ be the long line of $M(\tangle')$ containing $c$ and $d$. Since $\rank_{\tangle'}(E(N)\cup\{c,d\}) = t + 2$, we have $X_{cd}\cap E(N) = \emptyset$. Moreover, $\rank_{\tangle'}(\bigcup_{cd \in E(H)} X_{cd}) = 2 |E(H)|$. For each $cd \in E(H)$, let $F_{cd}$ be the maximal fan of $M'$ containing $c$ and $d$, and let $x_{cd}$ be an end of the fan. By Lemma \ref{lem:endoffan}, $x_{cd}$ can be chosen such that one of $M'\delete x_{cd}$ and $M'\contract x_{cd}$ is 3-connected with $N$ as a minor. 
	
	Consider the set $S := \{x_{cd} : cd \in E(H)\}$. Let $S'\subseteq S$ be such that $M'\delete s$ is 3-connected for all $s\in S'$. Suppose $|S'| \geq k + r$. Define $C' := C\smin V(H)$, and $D' := (D\smin V(H)) \cup S'$. Then Lemma \ref{lem:manyfans} implies that $M\contract C'\delete D'$ is 3-connected with $N$ as a minor, and
	\begin{align*}
		|D'| - |C'| = |S'| + \left(|D| - \frac{|V(H)|}{2}\right) - \left(|C| - \frac{|V(H)|}{2}\right) = |S'| - r \geq k,
	\end{align*}
	so the result follows from Lemma \ref{lem:balance}. Similarly, if $S'\subseteq S$ is such that $M\contract s$ is 3-connected for all $s\in S'$, then the result follows if $|S'| \geq k - r$. But since $|S|\geq 2k$, one of these situations must hold, which completes our proof.
\end{proof}

Now we can state our main result. As mentioned in the introduction, it depends on the rank of $E(N)$ in $M(\tangle)$, rather than on the size of $N$.

\newcommand{\bd}{\ensuremath{20k + t - 13}}
\begin{theorem}\label{thm:mainres}
	Let $k$ be a nonnegative integer, let $M$ be a 3-connected matroid, let $\tangle$ be a tangle of $M$, let $N$ be a minor of $M$ with no loops and coloops, and let $t := \rank_{\tangle}(E(N))$. If the order of $\tangle$ is at least $\bd$, then there is a set $X \subseteq E(M)$ of size $k$ such that $\rank_\tangle(E(N)\cup X) = t + k$, and such that one of $M\delete X$ and $M\contract X$ is 3-connected with $N$ as a minor.
\end{theorem}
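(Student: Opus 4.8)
The plan is to combine Theorem~\ref{thm:delconset} with the structural analysis of the restoration graph developed in Sections~\ref{sec:the_restoration_graph} and~\ref{sec:the_main_result}. First I would apply Theorem~\ref{thm:delconset} with $s$ chosen large enough (roughly $s = 10k + t - \text{constant}$, so that the hypothesis $\theta \geq 2s + t + 1$ is met by the bound $\bd$) to obtain disjoint sets $C,D\subseteq E(M)\smin E(N)$ with $M\contract C\delete D$ 3-connected, $\rank_\tangle(E(N)\cup C\cup D) = t + |C\cup D|$, and $|C\cup D|$ large. Note that this last rank condition guarantees, via Lemma~\ref{lem:tangleindep}-type reasoning, that $C\cup D$ is $\tangle$-independent (it is independent in the tangle matroid), so all the restoration-graph lemmas apply. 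Write $G := R(M,C,D)$.

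Next I would perform a case analysis on $G$. If $|C| - |D| \geq k$ (or symmetrically $|D| - |C| \geq k$), then Lemma~\ref{lem:balance} immediately delivers a set $C'\subseteq C$ with $|C'|\geq k$ and $M\contract C'$ 3-connected; one then trims $C'$ to size exactly $k$ while maintaining $\rank_\tangle(E(N)\cup C') = t + |C'|$ (this is automatic since $C'\subseteq C\cup D$ and the big rank equality is inherited by subsets), and checks $N$ is still a minor — which it is, since restoring elements only undoes deletions/contractions of elements outside $E(N)$. Otherwise $\big||C| - |D|\big| < k$, so $|C\cup D| \geq 2|D| - k$ and hence $\min(|C|,|D|)$ is still large. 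In this balanced regime I would look for an induced matching in $G$ with no privileged vertices: if $G$ has a privileged vertex $e$, restoring $e$ alone already gives a 3-connected matroid, which essentially lets us move $e$ from $C$ to $D$ (or vice versa) for free, improving the imbalance; iterating, we may assume $G$ has no privileged vertices at all, or the imbalance case takes over. A graph on $n$ vertices with no isolated vertices (Lemma~\ref{lem:restgraphconn}) and bounded maximum degree contains a large induced matching — more carefully, one uses a greedy argument: repeatedly pick an edge, delete its endpoints and their neighbourhoods; since the restoration graph has no isolated non-privileged vertices and we can control degrees (a vertex of high degree can itself be exploited), we extract an induced matching with at least $2k$ edges and no privileged vertices once $|C\cup D|$ exceeds a suitable linear bound in $k$. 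Feeding this matching into Lemma~\ref{lem:inducedmatching} yields either a set $C'$ with $M\contract C'$ 3-connected or a set $D'$ with $M\delete D'$ 3-connected, each of size $\geq k$ and with the required rank and minor properties; trim to size $k$ and we are done.

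The main obstacle I anticipate is the middle step: converting "the restoration graph is large" into "there is an induced matching of size $\geq 2k$ with no privileged vertices," because one must simultaneously (a) dispose of privileged vertices — each one can be used to rebalance $|C|$ versus $|D|$, but doing so changes $G$, so one needs a clean accounting argument showing the process terminates with either a $k$-imbalance or a privileged-free graph; and (b) bound vertex degrees in $G$ — a priori a single $c\in C$ could be adjacent to many $d\in D$, so the greedy induced-matching extraction could stall. The resolution of (b) should come from Lemma~\ref{lem:delcon4fan}: an edge $cd$ forces $c,d$ to be internal to a common fan of the restored matroid, and the skew-lines machinery (Lemma~\ref{lem:skew3seps}) together with $\rank_\tangle(C\cup D) = t + |C\cup D|$ constrains how these fans can overlap, limiting degrees. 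Chasing the exact linear constant — reconciling the $20k + t - 13$ in the statement with the $\geq 6$ and $\geq 2s+t+1$ thresholds in Theorem~\ref{thm:delconset} and the $2k$ in Lemma~\ref{lem:inducedmatching} — is the routine-but-fiddly bookkeeping that the full proof must carry out.
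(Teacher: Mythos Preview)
Your high-level architecture matches the paper's: apply Theorem~\ref{thm:delconset} (with $s = 10k-7$; the $t$ should not appear in $s$, only in the threshold $2s+t+1 = \bd$), form the restoration graph $G = R(M,C,D)$, and finish via Lemma~\ref{lem:balance} or Lemma~\ref{lem:inducedmatching}.

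The gap is exactly where you flagged it, and your proposed resolution of obstacle~(b) does not work. An edge $cd$ of $G$ does force $c,d$ into a common fan, but only of the matroid $M_{cd}$ obtained by restoring just $\{c,d\}$---a matroid that depends on the edge. For two neighbours $d,d'$ of the same $c$ the relevant fans live in the different matroids $M_{cd}$ and $M_{cd'}$, with no common 3-connected ambient matroid available for Lemma~\ref{lem:skew3seps}; so no degree bound on $c$ in $G$ follows from this line. The paper never bounds degrees. Instead it partitions $V(G)$ once into eight pieces $P_i,Q_i,U_i,T_i$ ($i=1,2$): privileged vertices $P_i$; non-privileged $Q_i$ with only privileged neighbours; and, on the remainder $G'$, it takes a maximal matching $R$ and runs a greedy expansion---choose $S_1'\subseteq R\cap C'$ maximal with $|N_{G'}(S_1')|\ge 2|S_1'|$, set $U_2 := N_{G'}(S_1')$ and $T_2 := (R\cap D')\smin U_2$, and symmetrically for $U_1,T_1$. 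The point is that the matching edges of $R$ meeting $T_2$ form an \emph{induced} subgraph of $G'$: any vertex there of degree $\ge 2$ could have been added to $S_1'$, contradicting maximality. If this induced matching (or its twin on $T_1$) has at least $2k$ edges, Lemma~\ref{lem:inducedmatching} finishes; otherwise each of the eight parts is small---each bound is obtained by restoring a judicious subset and applying Lemma~\ref{lem:balance}---and summing gives $|C\cup D|\le 10k-8$, contradicting the choice of $s$. Your parenthetical remark that ``a vertex of high degree can itself be exploited'' is actually much closer to the paper's mechanism than the fan idea: restoring such a vertex together with its neighbourhood shifts the balance, and the $S_1'$ construction is precisely the set-valued version of that observation.
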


\begin{proof}
	By applying Theorem \ref{thm:delconset} with $s = 10k-7$ we can find sets $C,D \subseteq E(M)$ such that $\rank_{\tangle}(E(N)\cup C\cup D) = t + |C\cup D|$, such that $M\contract C \delete D$ is 3-connected with $N$ as a minor, and such that $|C\cup D| \geq 10k - 7$. Let $G := R(M,C,D)$ be the restoration graph, and let $|C| - |D| = r$. We will call $r$ the \emph{balance} of the restoration graph. If $|r| \geq k$, then we are done by Lemma \ref{lem:balance}, so we may assume this is not the case. We  partition the vertices of $G$ into disjoint subsets $P_1,P_2,Q_1,Q_2,T_1,T_2,U_1,U_2$, with sizes $p_1,p_2,q_1,q_2,t_1,t_2,u_1,u_2$ respectively, as follows.
	
	Let $P_1$ be the set of privileged vertices in $C$, and let $P_2$ be the set of privileged vertices in $D$. Let $Q_1$ be the vertices of $C$ that only have neighbours in $P_2$, and let $Q_2$ be the set of vertices of $D$ that only have neighbours in $P_1$. Let $C' := C\smin (P_1\cup Q_1)$, let $D' := D\smin (P_2\cup Q_2)$, and let $G' := G[C'\cup D']$. Let $R$ be the vertex set of a maximal matching in $G'$. Note that, by our choice of $Q_1$ and $Q_2$, no vertex of $G'$ is isolated, so all vertices in $V(G')\smin R$ have a neighbour in $R$.
	
	Let $S_1 \subseteq R\cap C'$ be a minimal set such that the set of neighbours $N(S_1)$ includes all vertices in $D'\smin R$. Clearly $|S_1| \leq |D'\smin R|$, and $|N(S_1)\cap R| \geq |S_1|$ because $R$ is a matching. Hence $|N(S_1)| \geq 2 |S_1|$. Now let $S_1'$ be a maximal set containing $S_1$ such that $|N(S_1')| \geq 2|S_1'|$, and define $U_2 := N(S_1')$. Let $T_2 := R\smin U_2$.
	
	Symmetrically, let $S_2 \subseteq R\cap D'$ be a minimal set such that $N(S_2)$ includes all vertices in $C'\smin R$. Let $S_2'$ be a maximal set containing $S_2$ such that $|N(S_2')| \geq 2|S_2'|$, and define $U_1 := N(S_2')$. Let $T_1 := R\smin U_1$. From the definitions it follows immediately that $P_1, Q_1, U_1, T_1$ partition $C$, and that $P_2,Q_2,U_2, T_2$ partition $D$. We will now bound the sizes of these sets.
	
	If $p_1 \geq k + r$, then restoring $P_1$ yields a 3-connected matroid having a restoration graph with balance $|C| - p_1 - |D| \leq |C| - |D| - (k+r) = - k$, and the result follows from Lemma \ref{lem:balance}. Similarly, if $p_2 \geq k - r$, then restoring $P_2$ yields a restoration graph with balance $k$. It follows that we may assume 
	\begin{align}
		p_1 + p_2 \leq 2 k - 2.\label{eq:p12}
	\end{align}
	
	Let $s_1 := |S_1'|$ and $s_2 := |S_2'|$. If $s_1 \geq k - r$, then restoring $S_1' \cup U_2$ yields a restoration graph with balance
	\begin{align}
		|C| - s_1 - (|D| - u_2) = r + u_2 - s_1 \geq r + s_1 \geq r + k - r = k,
	\end{align}
	and we can apply Lemma \ref{lem:balance} again. Likewise, if $s_2 \geq k + r$, then we can apply Lemma \ref{lem:balance} to the restoration graph obtained by restoring $S_2' \cup U_1$. It follows that we may assume
	\begin{align}
		s_1 + s_2 \leq 2 k - 2.\label{eq:s12}
	\end{align}
	Finally, if $u_2 - s_1 + q_2 - p_1 + p_2 \geq k - r$, then restoring $U_2 \cup S_1' \cup Q_2 \cup P_1 \cup P_2$ yields a restoration graph with balance
	\begin{align}
		|C| - s_1 - p_1 - (|D| - u_2 - q_2 - p_2) \geq k,
	\end{align}
	and we can apply Lemma \ref{lem:balance} again. Likewise, if $u_1 - s_2 + q_1 - p_2 + p_1 \geq k + r$, then we can apply Lemma \ref{lem:balance} to the restoration graph obtained by restoring $U_1 \cup S_2' \cup Q_2 \cup P_2 \cup P_1$. It follows that
	\begin{align}
		u_1 - s_2 + q_1 + u_2 - s_1 + q_2 \leq 2k - 2.\label{eq:u12}
	\end{align}
	
	Next we direct our attention to $T_1$ and $T_2$. Let $H_1$ be the subgraph of the matching $R$ containing all edges that meet $T_1$. Let $H_2$ be the subgraph of the matching $R$ containing all edges that meet $T_2$.
	\begin{claim}
		The matchings $H_1$ and $H_2$ are induced subgraphs of $G$.
	\end{claim}
	\begin{subproof}
		If some vertex $c \in V(H_2)\cap C'$ has degree at least 2, then $c$ can be added to $S_1'$, a contradiction. Hence all vertices in $V(H_2)\cap C'$ have degree exactly 1, and necessarily all vertices in $V(H_2)\cap D'$ have degree exactly 1. We omit the identical proof for $H_1$.
	\end{subproof}
	If $t_1 \geq 2k$ or $t_2 \geq 2k$, then our result follows from Lemma \ref{lem:inducedmatching}. Hence we may assume that
	\begin{align}
		t_1 + t_2 \leq 4k - 2. \label{eq:t12}
	\end{align}

	Adding \eqref{eq:p12}, \eqref{eq:s12}, \eqref{eq:u12}, and \eqref{eq:t12} we find
	\begin{align}
		|C| + |D| = p_1 + u_1 + q_1 + t_1 + p_2 + u_2 + q_2 + t_2 \leq 10k - 8.
	\end{align}
	But $|C| + |D| \geq 10k - 7$ by assumption, a contradiction.
\end{proof}

The theorem from the introduction is now easy to prove:

\begin{proof}[Proof of Theorem \ref{thm:mainresintro}]
	Let $l$ be the number of elements of $N$ each of which is neither a loop nor a coloop. By Lemma \ref{lem:connectedminor}, $M$ has a minor $N'$ such that $N'$ has $N$ as a minor, $N'$ has no loops and no coloops, and $|E(N')| \leq |E(N)| + l$. Clearly
	\begin{align}
		\bw(N') \leq \bw(N) + l \leq 2 |E(N)|.
	\end{align}
	The result now follows from Theorem \ref{thm:mainres} applied to $M$ and $N'$.
\end{proof}

%
%

As a possible direction for future research, one could hope for a bound of a different nature, namely one that is a function of $k$ and $\rank_{\tangle}(E(M)\smin E(N))$. Presumably such a bound would necessitate keeping only a minor isomorphic to $N$. However, the ideas from this paper do not seem to be suitable for proving such a result, and it is unclear if such a result has applications.


\paragraph{Acknowledgements} We thank the two anonymous referees for their many suggestions. The exposition improved considerably as a result.

\renewcommand{\Dutchvon}[2]{#1}
\bibliography{../matbib2012}
\bibliographystyle{plainnat}
\end{document}